\newtheorem{theorem}{Theorem}[section]
\newtheorem{proposition}[theorem]{Proposition}
\newtheorem{lemma}[theorem]{Lemma}
\newtheorem{corollary}[theorem]{Corollary}
\theoremstyle{definition}
\newtheorem{definition}[theorem]{Definition}
\newtheorem{question}[theorem]{Question}
\newtheorem{remark}[theorem]{Remark}
\newtheorem{example}[theorem]{Example}
\DeclareMathOperator{\dist}{dist}
\DeclareMathOperator{\ran}{ran}
\DeclareMathOperator{\rk}{rank}
\let\sectionmark\S
\newcommand{\C}{\mathcal{C}}
\newcommand{\D}{\mathcal{D}}
\newcommand{\F}{\mathcal{F}} 
\renewcommand{\H}{\mathcal{H}} 
\newcommand{\I}{\mathcal{I}} 
\newcommand{\J}{\mathcal{J}} 
\newcommand{\K}{\mathcal{K}} 
\renewcommand{\L}{\mathcal{L}} 
\newcommand{\N}{\mathcal{N}} 
\renewcommand{\P}{\mathcal{P}} 
\newcommand{\R}{\mathcal{R}} 
\renewcommand{\S}{\mathcal{S}}
\newcommand{\T}{\mathcal{T}}
\newcommand{\U}{\mathcal{U}} 
\newcommand{\NN}{\mathbb{N}} 
\newcommand{\bh}{B(\H)}
\newcommand{\dn}{\D(\N)} 
\newcommand{\dnn}{\D(\NN)} 
\newcommand{\e}{\epsilon}
\newcommand{\ip}[1]{\langle #1\rangle}
\newcommand{\qt}{\mathcal{QT}}
\newcommand{\tn}{\T(\N)} 
\newcommand{\tnn}{\T(\NN)} 
\newcommand{\pint}{\Pi_\text{int}}
\newcommand{\pmax}{\Pi_\text{max}}
\newcommand{\pmin}{\Pi_\text{min}}
\newcommand{\types}{Type-S}
\begin{document}

\title{On the primitive ideals of nest algebras}
\author{John Lindsay Orr}
\address{
  Toll House,
  Traquair Road,
  Innerleithen, EH44 6PF,
  United Kingdom
}
\email{me@johnorr.us}
\keywords{nest algebra, primitive ideals, nets, continuum hypothesis}
\subjclass{47L35, 47L75}
\begin{abstract}
  We show that Ringrose's diagonal ideals are primitive ideals in a nest
  algebra (subject to the Continuum Hypothesis). This provides for the
  first time concerete descriptions of enough primitive ideals to obtain the
  Jacobson radical as their intersection. Separately, we provide a
  standard form for all left ideals of a nest algebra, which leads to
  insights into the maximal left ideals. In the case of atomic nest algebras
  we show how primitive ideals can be categorized by their behaviour on
  the diagonal, and provide concrete examples of all types.
\end{abstract}

\maketitle

\section{Introduction}

The Jacobson radical has been a frequent object of study in non-selfadjoint
algebras, and considerable effort has been expended to identify the radical
in the context of various classes of non-selfadjoint algebras, e.g.,
\cite{
  Ringrose:OnSoAlOp,
  Peters:SePrCstarAl,
  DavidsonOrr:JaRaCSL,
  Donsig:SeTrAfAl,
  MastrangeloMohlySolel:LoRaTrOpAl,
  DavidsonKatsoulisPitts:StFrSeAl,
  DonsigKatavolosManoussos:JaRaANCrPr,
  KatsoulisRamsey:CrPrOpAl}.
Why is this? At fist glance it might seem
that since many non-selfadjoint algebras are modelled more or less on the
algebra of finite-dimensional upper triangular matrices, the desire is
to obtain Wedderburn-type structure theorems for the algebras.
In fact, however, the Jacobson radical is rarely the right ideal for
such a decomposition, if it is even possible. The Jacobson radical
is often too small, and indeed in some cases non-selfadjoint algebras
are even semisimple
\cite{
  Donsig:SeTrAfAl, 
  MastrangeloMohlySolel:LoRaTrOpAl,
  DonsigKatavolosManoussos:JaRaANCrPr}.
Thus knowledge about the Jacobson radical rather points towards
more general structural information about the algebra and,
in particular, when the radical is small, indicates the presence
of a rich supply of irreducible representations, even in algebras
which have a strong heuristic connection with the upper triangular
matrix algebra.

The nest algebras are one such case. Indeed the main result of
Ringrose's paper \cite{Ringrose:OnSoAlOp}, which introduced the
class of nest algebras, was to describe the Jacobson radical $\R_\N$ of a
nest algebra $\tn$ (see Section~\ref{preliminaries-sect} below for precise
definitions of terms). However, except in the trivial case of a finite
nest, there is no Wedderburn-type decomposition $\tn = \D(\N)\oplus\R_\N$
as the sum of the diagonal algebra and the Jacobson radical. In fact
by \cite[Theorem 4.1]{Orr:TrAlIdNeAl}, a decomposition $\tn = \D(\N) \oplus \R$
for some ideal $\R$ is only possible if $\R$ is Larson's ideal $\R^\infty_\N$
\cite{Larson:NeAlSiTr}, and then only if the nest has no continuous part.
At issue here is the fact that unless the nest is finite $\R^\infty_\N$
is much bigger than the Jacobson radical;
in the case of upper triangular matrixes on $\ell^2(\NN)$, $\R^\infty_\N$ is
the collection of all strictly upper triangular operators, while
$\R_\N$ is the set of \emph{compact} strictly upper triangular operators.
Thus, the comparatively small Jacobson radical in nest algebras indicates that
there must be many irreducible representations other than the trivial ones
obtained as the compression to an atom of the nest.

However, up to now, the only other primitive ideals which could be identified explicitly
were the maximal two-sided ideals. (Maximal two-sided ideals are primitive; see
Remark~\ref{prim-ideals-remark} for a review of this and other ring-theoretic
facts.) In \cite{Orr:MaIdNeAl} we described the maximal two-sided ideals
of a continuous nest algebra and in \cite{Orr:MaTwSoIdNeAl} we extended the
description to cover all nest algebras.
(It should be noted these results rest on deep foundations; between them, they
require the similarity theory of nests and the Paving Theorem.) Even so, however,
these ideals alone do not account for the small Jacobson radical. Their intersection,
called the \emph{strong radical}, is similar in character to $\R^\infty_\N$
and in fact the two coincide when the nest is atomic.

The goal of this paper is to identify enough examples of primitive ideals of 
nest algebras to account for the small Jacobson radical, by which we mean that
their intersection should equal the Jacobson radical. The key examples
have been in plain view all along; they are the ``diagonal ideals'' which
Ringrose used in his original description of the radical
\cite[Theorem 5.3]{Ringrose:OnSoAlOp}. We shall show in
Theorem~\ref{diagonal-are-primitive-thm} that the diagonal ideals
are primitive.
This answers an open question of Lance~\cite{Lance:SoPrNeAl} (repeated
in~\cite{Davidson:NeAl}).
Interestingly, this result
relies on assuming a positive answer to the Continuum Hypothesis. See
the excellent survey paper \cite{Weaver:SeThCstarAl} for other recent
results in operator algebras which make use of nonstandard foundational
considerations.

After this, we turn to an analysis of the left ideals of
nest algebras in Section~\ref{left-ideals-sect}. We establish a standard form for 
\emph{all} left ideals, and also a stronger form which holds for many
norm-closed left ideals,
including the maximal left ideals. In Section~\ref{atomic-nest-algebras-sect}
we explore the primitive ideals of atomic nest algebras in more depth.
We identify three classes of
primitive ideals (the smallest, the largest, and the intermediate ones),
and we show that they are distinguished by their behaviour on the diagonal.
Section~\ref{infinite-upper-sect} focusses on the infinite upper-triangular
matrices, where we can give concrete examples of all types of primitive ideals,
and also applications to quasitriangular algebras.

\section{Preliminaries}\label{preliminaries-sect}

Throughout this paper the underlying Hilbert spaces are always assumed separable.
A \emph{nest} is a set of projections on a Hilbert space which is linearly ordered,
contains 0 and I, and is weakly closed (or, equivalently, order-complete). The
\emph{nest algebra}, $\tn$, of a nest $\N$ is the set of bounded operators leaving
invariant the ranges of $\N$.
The \emph{diagonal algebra}, $\dn$, is the set of operators having the ranges
of projections in $\N$ as reducing subspaces; equivalently, the commutant of $\N$.
An \emph{interval} of $\N$ is the difference $N-M$ of two 
projections $N > M$ in $\N$. Minimal intervals are called \emph{atoms} and the atoms
(if there are any) are pairwise orthogonal. If the join of the atoms is $I$ the nest
is called \emph{atomic};  if there are no atoms it is called \emph{continuous}.
For $N\in\N$, define
\[
  N^- := \bigvee \{M\in\N : M < N\}
  \quad\text{and}\quad
  N^+ := \bigwedge \{M\in\N : M > N\}
\]
Conventionally $0^- = 0$ and $I^+ = I$.
If $N > N^-$ then $N - N^-$ is an atom of $N$, and all atoms are of this form.
Conversely, if $N = N^- > 0$ then there is a strictly increasing sequence of projections
in $\N$ which converge to $N$. Similar remarks apply for $N^+$.
We shall make continual use of the fact that the rank-1 operator
$x\mapsto \langle x, f\rangle e$, which we write as $ef^*$, belongs to
$\tn$ if and only if there is an $N\in\N$ such that $e\in\ran(N^+)$
and $f\in\ran(N^\perp)$.
See \cite{Davidson:NeAl} for further properties of nest algebras.

\begin{example}
Let $\H := \ell^2(\NN)$ and let $\{e_i\}_{i=1}^\infty$ be the standard basis.
For $n\in\NN$, let $N_n$ be the projection onto the span of $\{e_1,\ldots, e_n\}$ and
let $\N := \{N_n : n\in\NN\}\cup\{I\}$. This is a nest, and $\tn$ is the
nest algebra of all infinite upper triangular operators with respect to the
standard basis. By slight abuse of notation, we write $\tnn$ for this algebra.
\end{example}

We now recall Ringrose's description of the Jacobson radical of a nest algebra,
in terms of diagonal seminorms and diagonal ideals:

\begin{definition}
Let $\N$ be a nest and fix $N<I$ in $\N$. The \emph{diagonal seminorm function}
$i^+_N(X)$ is defined for $X\in\tn$ by
\[
  i^+_N(X) := \inf \{\|(M-N)X(M-N) : M > N \text{ in } \N \}
\]
Likewise, for $N>0$ the diagonal seminorm function $i^-_N(X)$ is
\[
  i^-_N(X) := \inf \{\|(N-M)X(N-M) : M < N \text{ in } \N \}
\]
\end{definition}

It is straightforward to see that the functions $i^\pm_N$ are submultiplicative
seminorms on $\tn$ and dominated by the norm, and so their kernels
are norm closed two-sided ideals of $\tn$:

\begin{definition}
Let $\N$ be a nest. The \emph{diagonal ideals} are the ideals
\[
  \I_N^+ := \{X\in\tn : i_N^+(X) = 0 \} \qquad\text{(for $N<I$)}
\]
and
\[
  \I_N^- := \{X\in\tn : i_N^-(X) = 0\} \qquad\text{(for $N>0$)}
\]
\end{definition}

The diagonal ideals can be viewed as generalizations of those ideals of upper-triangular
$n\times n$ matrices consisting of all the matrices which vanish at a particular
diagonal entry. Indeed if $N > N^-$ then
\begin{equation}\label{inminus-at-an-atom-equ}
  \I_N^- = \{X\in\tn : (N - N^-)X(N - N^-) = 0\}
\end{equation}
However if $N = N^-$, then $I^-_N$ is the set of operators \emph{asymptotically}
vanishing close to $N$ (from below). More precisely, in the case of $\tnn$,
$\I_N^-$ is of the form (\ref{inminus-at-an-atom-equ}) for all $N<I$
and $I_I^-$ is the compact operators of $\tnn$. See Section~\ref{infinite-upper-sect}
for a detailed discussion of the primitive ideals in this algebra.

Ringrose gave the following description of the Jaconson radical in terms of
these diagonal ideals.

\begin{theorem}[\cite{Ringrose:OnSoAlOp}, Theorem 5.3]
The Jacobson radical of $\tn$ is the intersection of the diagonal ideals
of $\tn$.
\end{theorem}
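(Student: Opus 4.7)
The plan is to establish the two inclusions $\bigcap_N \I_N^\pm \subseteq \R_\N$ and $\R_\N \subseteq \bigcap_N \I_N^\pm$ separately.

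For the first inclusion, fix $X$ in the intersection. The key step is a \emph{partition refinement lemma}: for every $\epsilon > 0$ there exist $0 = P_0 < P_1 < \cdots < P_n = I$ in $\N$ with $\|(P_i - P_{i-1}) X (P_i - P_{i-1})\| < \epsilon$ for each $i$. I would prove this by exhaustion: set $S := \{P \in \N : [0, P] \text{ admits such a partition}\}$ and $P^* := \sup S \in \N$ (by order-completeness). If $P^* > 0$, then $X \in \I_{P^*}^-$ produces $M < P^*$ in $\N$ with $\|(P^* - M) X (P^* - M)\| < \epsilon$; choosing $M' \in S$ with $M \le M' \le P^*$ (which exists because $M$ cannot be an upper bound for $S$) and appending the block $P^* - M'$ to a partition witnessing $M' \in S$ shows $P^* \in S$, using the monotonicity $\|(P^* - M') X (P^* - M')\| \le \|(P^* - M) X (P^* - M)\|$. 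Supposing $P^* < I$ would then contradict maximality via $X \in \I_{P^*}^+$, so $P^* = I$. Applying the lemma with $\epsilon < 1$, write $X = D + E$ where $D = \sum_i Q_i X Q_i$ (with $Q_i := P_i - P_{i-1}$) satisfies $\|D\| < 1$ as an orthogonal sum, and $E = X - D$ is strictly block upper triangular, hence nilpotent of index $\le n$. The Neumann series gives $(I - D)^{-1} \in \tn$, which commutes with each $Q_i$, so $(I - D)^{-1} E$ remains strictly block upper triangular and nilpotent. The factorization $I - X = (I - D)\bigl(I - (I - D)^{-1} E\bigr)$ then yields $(I - X)^{-1} \in \tn$. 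Since $\bigcap \I_N^\pm$ is a two-sided ideal, the same reasoning applied to $YX$ shows $I - YX$ is invertible in $\tn$ for every $Y \in \tn$, so $X \in \R_\N$.

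For the reverse inclusion, fix $N \in \N$; by symmetry it suffices to show $\R_\N \subseteq \I_N^-$. If $N > N^-$ and $E := N - N^-$ is an atom, consider the compression $\phi_E : \tn \to B(\ran E)$, $\phi_E(X) := E X E|_{\ran E}$. Multiplicativity reduces to showing $E X (I - E) Y E = 0$ for $X, Y \in \tn$: write $I - E = N^\perp + N^-$, use $Y \ran E \subseteq \ran N$ to get $N^\perp Y E = 0$, and $E \perp N^-$ to get $E N^- = 0$. Surjectivity is clear since $EAE \in \tn$ for every $A \in B(\ran E)$, and (\ref{inminus-at-an-atom-equ}) identifies the kernel as $\I_N^-$. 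Since $B(\ran E)$ is semisimple, $\phi_E(\R_\N) = 0$, giving $\R_\N \subseteq \I_N^-$.

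The continuous case $N = N^-$ is where I expect the main obstacle, since no single atom-compression realizes $\tn / \I_N^-$ as a concrete semisimple algebra. Ringrose's original resolution, which I would follow, argues by contradiction: assume $X \in \R_\N$ with $i_N^-(X) \ge \delta > 0$, choose $M_k \nearrow N$ in $\N$ and unit vectors $v_k \in \ran(N - M_k)$ with $\|(N - M_k) X v_k\| \ge \delta/2$, and assemble $Y \in \tn$ from off-diagonal rank-one pieces $v u^*$ (with $v \in \ran P^+$ and $u \in \ran P^\perp$ for suitable $P \in \N$ — the only rank-one operators available in a continuous nest algebra) so that $\|(I - YX) v_k\|$ does not stay bounded below. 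This contradicts the characterization of $\R_\N$ as the set of $X$ such that $I - YX$ is invertible in $\tn$ for every $Y \in \tn$. The delicate part is organizing the rank-one pieces into a bounded $Y$ while retaining a genuine spectral conflict, since in the continuous setting rank-ones cannot be ``diagonal'' within any single interval; this is the principal technical burden of the proof.
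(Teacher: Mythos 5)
Your first inclusion, $\bigcap\I_N^\pm\subseteq\R_\N$, is complete and correct: the exhaustion argument producing a finite partition of $I$ into nest intervals with small diagonal compressions, followed by the factorization $I-X=(I-D)\bigl(I-(I-D)^{-1}E\bigr)$ with $E$ nilpotent, is sound (it is in effect Ringrose's finite-partition characterization of the radical). The atom case of the reverse inclusion is also fine. The genuine gap is the case $N=N^-$ (and its mirror $N=N^+$), which you flag as the principal burden but do not close. Your sketch --- choose unit vectors $v_k\in\ran(N-M_k)$ witnessing $i_N^-(X)\ge\delta$ and assemble a bounded $Y$ from rank-one pieces so that $I-YX$ fails to be invertible --- runs into exactly the obstruction you name: a ``backward-shift-like'' operator carrying later blocks to earlier ones would need rank ones of the form $x_{n}x_{n+1}^*$ with $x_n$ nest-earlier than $x_{n+1}$, and whether these can be arranged with the right interaction with $X$ is the whole difficulty. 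Closing it requires first replacing $X$ by $AXB$ in genuinely block-diagonal form with all blocks of norm greater than $1$ (a rank-obstruction and $w^*$-compactness argument; this is Lemma~\ref{block-diagonal-lemma} of the paper), and then averaging over long runs of blocks to produce unit vectors $y_n$ with $\|(I-A'XB')y_n\|\rightarrow 0$ (the construction in Lemma~\ref{countable-extension-lemma}, taking the ambient \types\ family to be trivial); then $I-A'XB'$ is not bounded below, hence not invertible, so $X\notin\R_\N$. Without some such construction the hard direction --- which is where all the content of the theorem lies --- is unproved.

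You should also be aware that the paper does not prove this statement directly: it is quoted from Ringrose. The only proof the paper offers is the remark following Corollary~\ref{diagonal-are-right-primitive-cor}, which derives the result by combining Theorem~\ref{prim-contains-diagonal-thm} (every primitive ideal contains a diagonal ideal, so the intersection of the diagonal ideals lies in the radical) with Theorem~\ref{diagonal-are-primitive-thm} (diagonal ideals are primitive, giving the reverse inclusion). That route is conceptually disjoint from yours and costs the Continuum Hypothesis and separability of $\H$; your direct approach, if the $N=N^-$ case were completed along the lines above, would be closer to Ringrose's original argument and strictly more general.
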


A key point to bear in mind is that although the diagonal ideals
are related to the primitive ideals, as the next result quoted shows,
they were not known to be primitive. Lance \cite{Lance:SoPrNeAl}
asked whether the diagonal ideals are primitive and, in his study of
the diagonal ideals and their quotients and proved a number of results which
are entailed by primitivity. In Theorem~\ref{diagonal-are-primitive-thm}
we show that the diagonal ideals are in fact primitive ideals.

The following useful result shows that each primitive ideal of a nest algebra
is associated with a unique diagonal ideal.

\begin{theorem}[\cite{Ringrose:OnSoAlOp}, Theorem 4.9]\label{prim-contains-diagonal-thm}
Every primitive ideal of $\tn$ contains exactly one diagonal ideal.
\end{theorem}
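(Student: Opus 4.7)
The plan is to fix a primitive ideal $P = \ker(\pi)$ for a strictly irreducible representation $\pi \colon \tn \to \text{End}(V)$ and extract a canonical ``cut point'' in $\N$ that $\pi$ singles out, which I would then use to prove both existence and uniqueness of the associated diagonal ideal inside $P$.

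First I would establish that $\pi(N) \in \{0, \text{id}_V\}$ for every $N \in \N$. The nest-algebra identity $XN = NXN$ (for $X \in \tn$) makes $\pi(N) V$ a $\pi(\tn)$-invariant subspace, so irreducibility forces it to be $0$ or $V$, and combined with the idempotence $\pi(N)^2 = \pi(N)$ this gives the dichotomy. The sets $\F_0 := \{N : \pi(N) = 0\}$ and $\F_1 := \{N : \pi(N) = \text{id}\}$ partition $\N$ into a down-set and an up-set, with $0 \in \F_0$ and $I \in \F_1$, so the linear order of $\N$ provides a cut point $N_0 \in \N$ of one of two types: a \emph{lower cut} ($N_0 \in \F_0$, target ideal $\I_{N_0}^+$) or an \emph{upper cut} ($N_0 \in \F_1$, target ideal $\I_{N_0}^-$).

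For existence in the lower-cut case (the upper cut is symmetric), the key computation is $\pi(M - N_0) = \pi(M) - \pi(N_0) = \text{id}$ for every $M > N_0$, which gives $\pi((M - N_0) X (M - N_0)) = \pi(X)$, equivalently $(M-N_0) X (M-N_0) - X \in P$. Given $X \in \I_{N_0}^+$, I take a sequence $M_n > N_0$ along which $\|(M_n - N_0) X (M_n - N_0)\| \to 0$; since $P$ is norm-closed (a standard fact for primitive ideals of unital Banach algebras, since $P = \{a : aA \subseteq L\}$ for a maximal modular left ideal $L$), the closed coset $X + P$ contains $0$, so $X \in P$ and $\I_{N_0}^+ \subseteq P$.

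For uniqueness I would argue contrapositively: if $\I_{N_1}^{\e_1}$ is a diagonal ideal genuinely different from the target ideal, I must exhibit $X \in \I_{N_1}^{\e_1}$ with $\pi(X) \neq 0$. Continuing in the lower-cut setting, witnesses come from a short menu: $I - N_0$ works for $(N_1, +)$ with $N_1 < N_0$ and for $(N_1, -)$ with $N_1 \leq N_0$; $N_1$ itself works for $(N_1, +)$ with $N_1 > N_0$; and $N_1^-$ works for $(N_1, -)$ with $N_1 > N_0 = N_1^-$ replaced by $N_1^- > N_0$. In each case the relevant compression vanishes by a short projection calculation while $\pi$ of the witness is $\text{id}$. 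The main obstacle is the \emph{atom-identification} scenario in which $N_0, N_1$ are consecutive in $\N$ with $N_0^+ = N_1$ and $N_1^- = N_0$: here no witness exists because $\I_{N_0}^+$ and $\I_{N_1}^-$ both coincide with $\{X \in \tn : (N_1 - N_0) X (N_1 - N_0) = 0\}$, so the two parametrizations describe the \emph{same} diagonal ideal. The delicate bookkeeping point of the argument is to verify that this atom scenario is the \emph{only} way distinct cut data $(N_1, \e_1) \neq (N_0, \e_0)$ can yield coincident diagonal ideals, so that the statement's ``exactly one'' refers precisely to uniqueness of ideals rather than of parametrizations.
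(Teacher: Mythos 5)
The paper does not actually prove this statement---it is quoted verbatim from Ringrose---so there is no internal proof to measure you against; your argument is essentially a reconstruction of Ringrose's original one, and its skeleton is sound. The dichotomy $\pi(N)\in\{0,\mathrm{id}\}$ via invariance of $\pi(N)V$, the cut point $N_0$ (using order-completeness of $\N$), the existence half via $X-(M-N_0)X(M-N_0)\in\ker\pi$ together with norm-closedness of primitive ideals, the reduction of uniqueness to producing, for each other diagonal ideal, a projection it contains on which $\pi$ acts as the identity, and the identification $\I_{N_0}^+=\I_{N_0^+}^-$ when $N_0^+-N_0$ is an atom: all of these check out.

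One item on your witness menu fails as written. In the lower-cut case you offer $N_1^-$ as the element of $\I_{N_1}^-\setminus P$ when $N_1^->N_0$. But $N_1^-$ lies in $\I_{N_1}^-$ only when $N_1>N_1^-$; in the genuinely new subcase $N_1=N_1^-$ (where $N_1$ is approached from below by nest projections) one has $N_1^-=N_1$, and $(N_1-M)N_1(N_1-M)=N_1-M$ is a nonzero projection for every $M<N_1$, so $i^-_{N_1}(N_1^-)=1$ and $N_1^-\notin\I_{N_1}^-$. The repair is short: if $N_0<N_1$ and no $M\in\N$ satisfies $N_0<M<N_1$, then $N_1^-=N_0$ and $N_0^+=N_1$, which is exactly your atom-identification scenario; otherwise pick such an $M$, note $(N_1-M)M(N_1-M)=0$ so $M\in\I_{N_1}^-$, while $\pi(M)=\mathrm{id}$ since $M>N_0$. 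This single witness covers every $\I_{N_1}^-$ with $N_1>N_0$ outside the atom scenario and makes $N_1^-$ unnecessary (the symmetric adjustment is needed in the upper-cut case). Finally, the ``delicate bookkeeping point'' you flag is not actually required: once every parameter pair other than the target cut either admits a witness outside $P$ or determines literally the same ideal as the target, the uniqueness of the contained diagonal ideal follows; you do not need a classification of all coincidences among parametrizations.
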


Based on this result we adopt the following notation:

\begin{definition}\label{isubp-def}
If $\P$ is a primitive ideal of the nest algebra $\tn$, write
$\I_\P$ for the unique diagonal ideal contained in $\P$.
\end{definition}

Finally we close the section by recalling Larson's ideal~\cite{Larson:NeAlSiTr},
$\R_\N^\infty$:

\begin{definition}
Let $\R_N^\infty$ be the set of $X\in\tn$ such that, given $\e>0$ we can find
a collection $\{N_i - M_i : \in\NN \}$ of pairwise orthogonal intervals of $\N$
which sum to $I$ and such that $\|(N_i - M_i)X(N_i - M_i)\| < \e$.
\end{definition}

Ringrose~\cite[Theorem 5.4]{Ringrose:OnSoAlOp} provides an alternate description
of the Jacobson radical which is formally very similar to Larson's ideal. The
only difference is the requirement that the collections of pairwise orthogonal
intervals must be \emph{finite}. However this makes an enormous difference to the
size of the ideal as the following example shows.

\begin{example}
Let $\N$ be the canonical nest on $\ell^2(\NN)$. Then $\R_\N$ is the set of
zero-diagonal compact operators in $\tnn$ and $\R_\N^\infty$ is the set
of all zero-diagonal operators in $\tnn$. Note in particular that
$\tnn = \dnn \oplus \R_\N^\infty$ but that $\tnn \not= \dnn \oplus\R_\N$ (for
example, the right-hand side fails to contain the unilateral backward shift).
\end{example}

\section{The diagonal ideals are primitive}\label{diagonal-ideal-are-primitive-sect}

The main result of this section is Theorem~\ref{diagonal-are-primitive-thm},
in which we prove that the diagonal ideals of a nest algebra are primitive.
We start by recalling some basic
facts about primitive ideals which can be found in many standard texts of ring theory
or Banach algebras. See, e.g., \cite[Chapter III]{BonsallDuncan:CoNoAl}.

\begin{remark}\label{prim-ideals-remark}
Let $A$ be a unital Banach algebra. The (left) primitive ideals of $A$ are the annihilators
of left $A$-modules, or, equivalently, the kernels of
the irreducible representations of $A$. If $P$ is any primitive ideal of $A$ then
there is a maximal left ideal $L$ of $A$ such that $P$ is the kernel of the left
regular representation of $A$ on $A/L$. Thus $P$ is the largest two-sided ideal of
$A$ contained in $L$, and is equal to
\[
  \{ x\in A : xA \subseteq L \}
\]
From this, together with the maximality of $L$, it follows easily that $x\not\in P$
if and only if there are $a,b\in A$ such that $ e - axb \in L$ (where $e$ is the unit
of $A$). Finally, of course, the Jacobson radical is, by definition, the intersection
of all the primitive ideals of $A$. Analogously, the \emph{right} primitive
ideals are the kernels of right $A$-modules and each right primitive ideal
is the kernel of the right module action of $A$ on the quotient $A/R$ of $A$
by some maximal right ideal. The intersection of the maximal right primitive ideals
is also the (same) Jacobson radical.
\end{remark}

Lemma~\ref{block-diagonal-lemma} will enable us to convert arbitrary upper
triangular operators to block diagonal form. It relies on the following useful
technical lemma which we quote in full.

\begin{lemma}\cite[Lemma 2.2]{Orr:StIdNeAl}\label{interpolating-vectors-lemma}
Let $X\in\bh$ and let $P_n,Q_n$ ($n\in\NN$) be sequences of projections such that
$\dist(P_nXQ_n, \F_{4n-4}) > 1$ for all $n$, where $\F_k$ denotes the set of
operators of rank not greater than $k$. Then there are orthonormal sequences
$x_i\in P_i\H$ and $y_i\in Q_i\H$ such that $\ip{x_i, Xy_j} =0$ for all $i\not= j$,
and $\ip{x_i, Xy_i}$ is real and greater than $1$ for all $i\in\NN$.
\end{lemma}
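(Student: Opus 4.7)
The plan is to build the sequences $\{x_n\}$, $\{y_n\}$ by induction on $n$, and at each stage to use the rank hypothesis to guarantee that enough norm survives after we cut out the (finitely many) subspaces created by earlier choices.

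At stage $n$, suppose $x_1,\ldots,x_{n-1}$ and $y_1,\ldots,y_{n-1}$ have already been chosen. In order to extend the construction I need a unit vector $x_n\in P_n\H$ orthogonal to each of $x_1,\ldots,x_{n-1}$ (orthonormality) and to each of $Xy_1,\ldots,Xy_{n-1}$ (so that $\ip{x_n,Xy_j}=0$ for $j<n$), together with a unit vector $y_n\in Q_n\H$ orthogonal to each of $y_1,\ldots,y_{n-1}$ and to each of $X^*x_1,\ldots,X^*x_{n-1}$; I also need $\ip{x_n,Xy_n}$ to be real and greater than $1$. Let $E$ be the projection onto $\mathrm{span}\{x_1,\ldots,x_{n-1},Xy_1,\ldots,Xy_{n-1}\}$ and $F$ the projection onto $\mathrm{span}\{y_1,\ldots,y_{n-1},X^*x_1,\ldots,X^*x_{n-1}\}$, so that $\rk E,\rk F\le 2(n-1)$. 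Set $P_n':=P_n\wedge E^\perp$ and $Q_n':=Q_n\wedge F^\perp$. Then the problem reduces to finding unit vectors $x_n\in\ran P_n'$, $y_n\in\ran Q_n'$ with $\ip{x_n,Xy_n}$ of modulus greater than $1$; the phase can then be absorbed into $x_n$.

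The key small observation, which I would record as a preliminary, is that for any projection $P$ and any finite-rank projection $E$ the difference $P-(P\wedge E^\perp)$ is itself a projection of rank at most $\rk E$: if $v\in\ran(P-P\wedge E^\perp)$ satisfies $Ev=0$ then $v\in P\H\cap E^\perp\H=\ran(P\wedge E^\perp)$, forcing $v=0$, so $E$ is injective on that subspace. Applying this to both sides, $\rk(P_n-P_n')\le 2(n-1)$ and $\rk(Q_n-Q_n')\le 2(n-1)$. Writing
\[
  P_nXQ_n-P_n'XQ_n'=(P_n-P_n')XQ_n+P_n'X(Q_n-Q_n')
\]
exhibits $P_nXQ_n-P_n'XQ_n'$ as an operator of rank at most $4(n-1)=4n-4$, so the hypothesis $\dist(P_nXQ_n,\F_{4n-4})>1$ gives $\|P_n'XQ_n'\|>1$.

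Consequently there exist unit vectors $u,v$ with $|\ip{u,P_n'XQ_n'v}|>1$; replacing them by $P_n'u$ and $Q_n'v$ (which only increases the relevant inner product because $P_n'$ and $Q_n'$ are projections on the correct sides) and normalizing, I obtain $x_n\in\ran P_n'$, $y_n\in\ran Q_n'$ with $|\ip{x_n,Xy_n}|>1$; multiplying $x_n$ by a unimodular scalar makes the inner product real and positive without disturbing any of the orthogonality conditions. This closes the induction, with the base case $n=1$ being trivial since $\F_0=\{0\}$ forces $\|P_1XQ_1\|>1$. The only real obstacle in the argument is bookkeeping the rank cost of each previously chosen vector, and the factor $4$ in the hypothesis $\F_{4n-4}$ is precisely calibrated for the two orthogonality conditions imposed on each side at each stage.
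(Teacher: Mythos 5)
Your argument is correct and complete: the greedy induction, the observation that $\rk\bigl(P-(P\wedge E^\perp)\bigr)\le\rk E$, and the decomposition $P_nXQ_n-P_n'XQ_n'=(P_n-P_n')XQ_n+P_n'X(Q_n-Q_n')$ together account exactly for the constant $4n-4$ in the hypothesis. Note that the paper itself does not prove this lemma --- it is quoted verbatim from \cite[Lemma 2.2]{Orr:StIdNeAl} --- so there is no in-paper proof to compare against, but your proof is the natural self-contained argument and I see no gaps.
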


\begin{lemma}\label{block-diagonal-lemma}
Suppose $X\in\tn$ but $X\not\in\I^-_N$ for some $N=N^->0$ in $\N$. Then there are $A,B\in\tn$
and a sequence $N_k$ of nest projections strictly increasing to $N$ such that
\[
  AXB = \sum_{k=1}^\infty (N_k - N_{k-1}) AXB (N_k - N_{k-1})
\]
and each of the terms
$(N_k - N_{k-1}) AXB (N_k - N_{k-1})$ has norm greater than $1$.
\end{lemma}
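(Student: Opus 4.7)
After rescaling $X$ by a positive scalar (which at most rescales $A,B$ at the end), assume $i_N^-(X)>1$. I will build $A = \sum_k u_k x_k^*$ and $B = \sum_k y_k w_k^*$ as norm-convergent rank-one sums, with $x_k, y_k$ interpolating vectors satisfying $\ip{x_i, X y_j} = \delta_{ij}\alpha_i$ with $\alpha_i > 1$ (as in the conclusion of Lemma~\ref{interpolating-vectors-lemma}), and with $u_k, w_k$ auxiliary unit vectors placed so each rank-one summand lies in $\tn$. The biorthogonality forces
\[
  AXB = \sum_k \alpha_k u_k w_k^*,
\]
so placing $u_k,w_k \in \ran(N_k - N_{k-1})$ immediately delivers the required block-diagonal form with each block of norm $\alpha_k > 1$.

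To accommodate the rank-ones in $\tn$, split each coarse interval $\ran(N_k - N_{k-1})$ into three sub-intervals using nest projections $N_{k-1} < N_k' < N_k'' < N_k$, which exist in $\N$ because $N = N^-$. Put $u_k$ in the bottom sub-interval $\ran(N_k' - N_{k-1})$, both $x_k$ and $y_k$ in the middle sub-interval $\ran(N_k'' - N_k')$, and $w_k$ in the top sub-interval $\ran(N_k - N_k'')$. Then $u_k \in \ran(N_k')$ and $x_k \perp \ran(N_k')$ puts $u_k x_k^*$ in $\tn$ via the split at $N_k'$; likewise $y_k \in \ran(N_k'')$ and $w_k \perp \ran(N_k'')$ puts $y_k w_k^*$ in $\tn$ via the split at $N_k''$.

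The vectors $x_k, y_k$ are produced by an inductive variant of the Lemma~\ref{interpolating-vectors-lemma} argument. Given $N_{k-1}$ and the prior vectors, pick any $N_k' \in \N$ with $N_{k-1} < N_k' < N$, and set $W_k := \operatorname{span}\{(I - N_k')X^* x_i : i < k\}$, a subspace of $\ran(I - N_k')$ of dimension at most $k-1$; imposing $y_k \perp W_k$ will ensure $\ip{x_i, X y_k} = 0$ for $i < k$, while the dual orthogonality $\ip{x_k, X y_j} = 0$ for $j < k$ is automatic because $X y_j \in \ran(N_{k-1}) \subseteq \ran(N_k')$ while $x_k$ lies in $\ran(N_k')^\perp$. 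The key estimate
\[
  \essnorm{(N - N_k')X(N - N_k')} \geq i_N^-(X) > 1
\]
holds because for any compact $K$ the compressions $(N-M')K(N-M')$ tend to zero in norm as $M' \nearrow N$ in $\N$, while $\|(N-M')X(N-M')\| \geq i_N^-(X)$ for every such $M'$. This estimate is preserved under the finite-rank correction needed to enforce the $W_k^\perp$ constraint, so by SOT-lower-semicontinuity of the norm, applied to the net $(L - N_k')X(L - N_k') P_{V_L}$ with $V_L := \ran(L - N_k') \cap W_k^\perp$ as $L \nearrow N$ through $\N$, there is some $N_k'' \in \N$ with $N_k' < N_k'' < N$ and a unit vector $y_k \in V_{N_k''}$ satisfying $\|(N_k'' - N_k')Xy_k\| > 1$. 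Set $x_k := (N_k'' - N_k')X y_k / \|(N_k'' - N_k')X y_k\|$, choose $N_k \in \N$ with $N_k'' < N_k < N$ arranged cofinally so that $N_k \nearrow N$, and finally pick unit vectors $u_k \in \ran(N_k' - N_{k-1})$ and $w_k \in \ran(N_k - N_k'')$. The hardest step is establishing the SOT convergence $P_{V_L} \to P_{V_\infty}$ (where $V_\infty := \ran(N - N_k') \cap W_k^\perp$), which must be combined with the essential-norm estimate above to deliver $y_k$ in the required finite sub-interval rather than only in $\ran(N - N_k')$.
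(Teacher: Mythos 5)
Your proposal is correct in substance but follows a genuinely different route from the paper. The paper does not re-derive the interpolating vectors: it verifies, by a $w^*$-compactness and lower-semicontinuity argument, that a subsequence $N_{k_n}$ can be extracted with $\dist\bigl((N_{k_n}-N_{k_{n-1}})X(N_{k_n}-N_{k_{n-1}}),\F_{4n-4}\bigr)>1$, feeds this into the quoted Lemma~\ref{interpolating-vectors-lemma}, and then — because its $x_n$ and $y_n$ live in the \emph{same} interval — uses an index-shifting trick ($A=\sum x_{3n}x_{3n+1}^*$, $B=\sum y_{3n+1}y_{3n+2}^*$) to place the rank-one summands in $\tn$. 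You instead prove the stronger, uniform estimate $\essnorm{(N-N_k')X(N-N_k')}\ge i_N^-(X)$ (valid since $N=N^-$, so $(N-M')K(N-M')\to0$ in norm for compact $K$), which subsumes the paper's distance-to-$\F_{4n-4}$ bounds for every rank at once, and you handle membership in $\tn$ by the three-fold subdivision $N_{k-1}<N_k'<N_k''<N_k$ rather than by index shifting. Your version is more self-contained (no appeal to the external lemma) and yields the slightly cleaner conclusion that each block of $AXB$ is rank one; the paper's is shorter because the delicate interpolation is outsourced. The one step you rightly flag as hardest does go through, but for a cleaner reason than raw \textsc{SOT} convergence of $P_{V_L}$: since every $w\in W_k$ satisfies $(L-N_k')w=(L-N_k')(N-N_k')w$, only the finite-dimensional space $W_k':=(N-N_k')W_k$ is relevant, and $(L-N_k')$ converges to the identity \emph{in norm} on $W_k'$, so the rank-$(\le k-1)$ projections onto $(L-N_k')W_k'$ converge in the gap metric to $P_{W_k'}$; combined with $(L-N_k')\to(N-N_k')$ in \textsc{SOT} this gives $P_{V_L}\to P_{V_\infty}$ in \textsc{SOT}, and then lower semicontinuity of the norm plus the essential-norm bound (unchanged by the finite-rank cut to $V_\infty$) delivers $N_k''$ and $y_k$ as you claim. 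With that point made explicit, your argument is complete.
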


\begin{proof}
Rescaling if necessary, assume $i_N^-(X) > 1$.
Choose a sequence $N_k\in\N$ which increases strictly to $N$. We shall inductively
construct a subsequence $N_{k_n}$ such that
$\dist((N_{k_n} - N_{k_{n-1}})X(N_{k_n} - N_{k_{n-1}}), \F_{4n-4}) > 1$
for all $k$, and the result will follow from an easy application of
Lemma~\ref{interpolating-vectors-lemma}. Take $k_1 := 1$ and suppose
$k_1 < k_2 < \cdots < k_{n-1}$ to have been chosen with the desired property.

Suppose for a contradiction that
$\dist((N_k - N_{k_{n-1}})X(N_k - N_{k_{n-1}}), \F_{4n-4}) \le 1$ for all
$k>k_{n-1}$.
Fix an $a$ with $1 < a < i_N^-(X)$ and for each $k\ge k_{n-1}$ find $F_k\in\F_{4n -4}$ such that
\[
  \|(N_k - N_{k_{n-1}})X(N_k - N_{k_{n-1}}) - F_k\| < a
\]
The sequence $F_k$ is norm-bounded and so has a $w^*$-convergent subsequence,
$F_{m_j} \rightarrow F$. But $F\in\F_{4n-4}$ since $\F_{4n-4}$ is $w^*$-closed
and, by the the lower semicontinuity of the norm,
\begin{eqnarray*}
  i_N^-(X) &\le& \|(N - N_{k_{n-1}})X(N - N_{k_{n-1}}) - F\|     \\
           &\le& \liminf_{j\rightarrow\infty}
               \|(N_{m_j} - N_{k_{n-1}})X(N_{m_j} - N_{k_{n-1}}) - F_{m_j}\| \\
           &\le& a
\end{eqnarray*}
which is a contradiction. Thus we find $k_n>k_{n-1}$ with which to continue the induction.

With $N_{k_n}$ chosen, apply Lemma~\ref{interpolating-vectors-lemma} to obtain
unit vectors $x_n, y_n$ in the range of $N_{k_n} - N_{k_{n-1}}$ such that
$\ip{x_m, Xy_n} = 0$ for all $m \not= n$, and $\ip{x_m, Xy_m} > 1$ for all $m\in\NN$.
Set
\[
  A := \sum_{n=1}^\infty x_{3n}x^*_{3n+1}
  \qquad\text{and}\qquad
  B := \sum_{n=1}^\infty y_{3n+1}y^*_{3n+2}
\]
Then $A,B\in\T(\N)$ since the terms of both sums are of the form $N_{k_m}RN_{k_m}^\perp$, and
$AXB = \sum_{n=1}^\infty \langle x_{3n+1}, X y_{3n+1}\rangle x_{3n}y_{3n+2}^*$, so that
$AXB = \sum_{n=1}^\infty(N_{k_{3n+2}} - N_{k_{3n-1}})AXB(N_{k_{3n+2}} - N_{k_{3n-1}})$
and each of the terms of the sum has norm greater than $1$.
\end{proof}

The following, unfortunately rather technical, definition is central to
our analysis in this section.

\begin{definition}
Fix a nest $\N$ and a projection $N\in\N$.
Say that a set $\S$ of operators in $\bh$ are of \types\ if there exists a
strictly increasing sequence $N_n$ in $\N$ which converges to $N$, and a
sequence of unit vectors $x_n = (N_n - N_{n-1})x_n$
such that for each $X\in\S$ both $X x_n \rightarrow 0$ and $X^* x_n \rightarrow 0$.
\end{definition}

Clearly if $\S\subseteq\tn$ is of \types, then it lies in both a proper left ideal of $\tn$
and in a proper right ideal of $\tn$. Note, however that it need not lie in a proper
two-sided ideal; for example consider the singleton $\{I - U\}$ where $U$ is the unilateral
backward shift on $\ell^2(\NN)$. This is \types\ with respect to the sequences
$N_{2^n}$ and $x_n := 2^{\frac{1 - n}{2}}\sum_{i=2^{n-1}}^{2^n -1} e_i$
but does not lie in a proper two-sided ideal of $\tnn$.
In fact this example is the prototype of the analysis which follows and an analogous
sequence is at the heart of the proof of the next lemma. Note also that,
strictly speaking, ``\types'' is a property which a set
has with respect to a particular $\N$ and $N\in\N$. In the following arguments these will
always be easily discerned from the context.

\begin{lemma}\label{countable-extension-lemma}
Fix a nest $\N$ and a projection $N=N^->0$ in $\N$ and let $\{X_i:i\in\NN\}$ be a set of \types.
Let $X\in\tn$ but $X\not\in\I_N^-$.
Then there are $A, B\in\tn$ such that $\{I - AXB\} \cup \{X_i:i\in\NN\}$
is also of Type-S.
\end{lemma}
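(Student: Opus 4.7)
The plan is to combine the block-diagonalization of Lemma~\ref{block-diagonal-lemma} with the Type-S data for $\{X_i\}$. First I apply Lemma~\ref{block-diagonal-lemma} to $X$, obtaining $A_0, B_0\in\tn$ and a strictly increasing sequence $(N_k)$ in $\N$ with $N_k\to N$ such that $A_0 X B_0$ is block diagonal across the intervals $(N_{k-1}, N_k]$ with block norms exceeding $1$. A trivial strengthening of the argument in Lemma~\ref{block-diagonal-lemma} (choosing the initial scale large) allows the block norms to grow without bound.

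The \types\ hypothesis on $\{X_i : i\in\NN\}$ supplies a strictly increasing sequence $M_m\to N$ in $\N$ and unit vectors $u_m = (M_m - M_{m-1}) u_m$ with $X_i u_m\to 0$ and $X_i^* u_m\to 0$ for each $i$. I pass to a common refinement $P_n\to N$ of the sequences $(N_k)$ and $(M_m)$, so that each interval $(P_{n-1}, P_n]$ contains several blocks of the $A_0 X B_0$ decomposition as well as several of the Type-S vectors $u_m$.

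The construction then aims to produce $A, B\in\tn$ (by modifying $A_0, B_0$) and a unit vector $z_n = (P_n - P_{n-1}) z_n$ in each interval, satisfying $\|z_n - AXB z_n\|\to 0$, $\|z_n - (AXB)^* z_n\|\to 0$ together with $\|X_i z_n\|\to 0$ and $\|X_i^* z_n\|\to 0$ for each fixed $i$. A standard Gram--Schmidt and diagonal argument across $n$ and $i$ handles the $X_i$ conditions, exploiting the nearby $u_m$ sitting inside each large interval.

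The principal obstacle lies in arranging the approximate joint fixed-point conditions for $AXB$ and $(AXB)^*$. The operator $A_0 X B_0$ produced by Lemma~\ref{block-diagonal-lemma} has the explicit form $\sum_n \lambda_n x_{3n} y_{3n+2}^*$, and a short calculation shows that $(A_0 X B_0)^2 = 0$ because the intervals containing $y_{3n+2}$ and $x_{3m}$ are always disjoint. In particular, $A_0 X B_0$ admits no unit vector $z$ with $A_0 X B_0 z \approx z$, so I must replace $A_0, B_0$ with a more elaborate construction. Using the dimensional room afforded by the large intervals $(P_{n-1}, P_n]$ together with an interpolation argument in the spirit of Lemma~\ref{interpolating-vectors-lemma}, applied to the nontrivial compressions of $X$ guaranteed by $X\notin\I_N^-$, I aim to arrange $AXB$ to carry a self-adjoint rank-one summand inside each such large interval, which supports the joint approximate fixed vector $z_n$. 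The most delicate technical point will be keeping this enhanced $A, B$ inside $\tn$, since rank-one summands $ef^*$ of elements of $\tn$ must have $e$ above $f$ in the nest order, leaving little room for a self-adjoint piece unless one exploits the extra nest projections made available by the hypothesis $N = N^- > 0$.
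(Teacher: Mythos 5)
Your setup is right and you have correctly located the crux: the operator $A_0XB_0$ produced by Lemma~\ref{block-diagonal-lemma} is nilpotent of order $2$, so it admits no approximate fixed unit vector, and something more must be done. But your proposed repair --- arranging for $AXB$ to carry a self-adjoint rank-one summand in each large interval --- is a dead end, for exactly the reason you flag at the end. A rank-one operator $ee^*$ lies in $\tn$ only if $e\in\ran(N^+)\cap\ran(N^\perp)=\ran(N^+-N)$ for some $N\in\N$, i.e.\ only if $e$ lives in an atom. Under the standing hypothesis $N=N^-$ (and a fortiori for a continuous nest) there is no supply of such atoms to draw on, so $AXB$ cannot be given a self-adjoint rank-one piece, let alone one aligned with the Type-S vectors $u_m$. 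The extra nest projections below $N$ do not help: they give you strictly upper-triangular rank-one operators $ef^*$ with $e$ strictly above $f$, never $e=f$.

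The mechanism that actually works --- and which the paper's own ``prototype'' example $\{I-U\}$ with $x_n = 2^{(1-n)/2}\sum_{i=2^{n-1}}^{2^n-1}e_i$ is advertising --- is to accept nilpotence but make its order large. One builds, inside the $n$th stage, contractions $A_n,B_n$ supported on a single long interval so that $A_nXB_n=\sum_{j=2}^{N}x_{m_{j-1}}x_{m_j}^*$ is a truncated forward shift along a chain of $N=4n^2$ of the \emph{given} Type-S vectors $x_{m_j}$ (each factor $x_{m_{j-1}}e_j^*$ and $f_jx_{m_j}^*$ is strictly upper triangular, using a unit vector $f_j$ with $\|Xf_j\|>1$ from a diagonal block of $X$ sitting between consecutive $x_{m_j}$'s). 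Taking $y_n:=N^{-1/2}\sum_{j=1}^N x_{m_j}$, the defect $\|(I-A_nXB_n)y_n\|=\|(I-A_nXB_n)^*y_n\|=N^{-1/2}=1/(2n)$ comes only from the two ends of the chain and is diluted by its length, while $\|X_iy_n\|\le N^{-1/2}\sum_j\|X_ix_{m_j}\|$ stays small because $y_n$ is an average of vectors already adapted to the $X_i$. Summing the block-diagonal pieces $A=\sum A_n$, $B=\sum B_n$ finishes the argument. So: no unbounded block norms are needed (norm $>1$ suffices after normalizing $f_j$), no self-adjoint summands are possible or required, and the approximate fixed vector is manufactured from length, not from symmetry. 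As written, your proof has a genuine gap at its central step.
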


\begin{proof}
Take a sequence $N_n\in\N$ which increases strictly to $N$ and unit vectors
$x_n = (N_n - N_{n-1})x_n$ such that $X_i x_n, X^*_i x_n \rightarrow 0$ for all $i\in\NN$.

By Lemma~\ref{block-diagonal-lemma} there are $A,B$ in $\tn$ and a sequence of
nest projections strictly increasing to $N$ such that $AXB$ is block diagonal
with respect to these projections and each of the blocks has norm greater than $1$.
Since $N_k$ and $x_k$ demonstrate the \types\ property, so does
any subsequence of theirs and so, replacing $N_k$ with a subsequence, we may assume that
each interval $N_k - N_{k-1}$ dominates a block of $AXB$.
Multiplying $AXB$ by a diagonal projection to select only those blocks which are
dominated by an $N_k - N_{k-1}$ and have norm greater than $1$, and replacing
$X$ with the resulting operator we may now assume that $X$ is block diagonal with respect to $N_k$,
and that all the blocks $(N_k-N_{k-1})X(N_k-N_{k-1})$ have norm greater than $1$.

We shall inductively construct a new sequence of unit vectors
$y_n = (N_{k_n} - N_{k_{n-1}})y_n$ for a subsequence $(k_n)$, together with contractions
\[
  A_n = (N_{k_n} - N_{k_{n-1}})A_n(N_{k_n} - N_{k_{n-1}})
\]
and
\[
  B_n = (N_{k_n} - N_{k_{n-1}})B_n(N_{k_n} - N_{k_{n-1}})
\]
in $\tn$ such that
\[
  \max\{\|(I - A_n X B_n) y_n\|, \|(I - A_n X B_n)^* y_n\|\}<1/n
\] 
and
$\max\{\|X_i y_n\|, \|X^*_i y_n\|\} < 1/n$ for all $1\le i\le n$. 
The result will then follow by taking $A := \sum_{n=1}^\infty A_n$ and $B := \sum_{n=1}^\infty B_n$.

To perform the induction, fix $n$ and suppose $k_m$, $y_m$, $A_m$, and $B_m$ have
been chosen for all $m<n$. (To get the induction started when $n=1$, define $k_0 := 0$ and observe
that no other features of the preceding steps are used in the induction step which follows.)

Note that, for all sufficiently large $m$,
\[
  \max\{\|X_i x_m\|,\|X^*_i x_m\|\} < 1/(2n^2)
\]
for all $1\le i \le n$. Thus, taking $N=4n^2$ we can pick $m_1<m_2<\cdots<m_N$ such that
$m_1 > k_{n-1} + 1$, each $m_{j} > m_{j-1} + 1$, and for all $1\le i \le n$ and $1 \le j \le N$,
$\max\{\|X_i x_{m_j}\|, \|X^*_i x_{m_j}\|\} < 1/(2n^2)$.

Set $k_n := m_N$ and $y_n := N^{-1/2}\sum_{j=1}^N x_{m_j}$, which is a unit vector since
the $x_{m_j}$ are pairwise orthogonal. For each $1 < j\le N$, 
the interval $N_{m_j - 1} - N_{m_{j-1}}$ dominates a diagonal block of $X$
which has norm greater than 1. Thus, we can choose
vectors $e_j$ and $f_j$ in $N_{m_j - 1} - N_{m_{j-1}}$ with
$\|e_j\| \ge \|f_j\| = 1$ and $e_j = X f_j$ and set
\[
  A_n := \sum_{j=2}^N \|e_j\|^{-1}\, x_{m_{j-1}}e_j^*
  \qquad\text{and}\qquad
  B_n := \sum_{j=2}^N \|e_j\|^{-1}\,f_jx_{m_j}^*
\]
Since
\[
  x_{m_{j-1}}e_j^* = N_{m_{j-1}}(x_{m_{j-1}}e_j^*)N_{n_{j-1}}^\perp
  \qquad\text{and}\qquad
  f_jx_{m_j}^* = N_{m_j - 1}(f_jx_{m_j}^*)N_{m_j -1}^\perp,
\]
each of the terms of the sums are in $\tn$,
and the ranges and cokernels of the terms are pairwise orthogonal, so that both
sums converge strongly. Now clearly for each $1\le i\le n$,
\(
  \|X_i y_n\|
    \le N^{-1/2}\sum_{j=1}^N \|X_i x_{m_j}\|
    < N^{1/2}/2n^2
    = 1/n
\) and, likewise $\|X^*_i y_n\| < 1/n$.
Further, $A_n X B_n = \sum_{j=2}^N x_{m_{j-1}}x_{m_j}^*$, so that
\[
  \|(I - A_n X B_n)y_n\| 
    = N^{-1/2}\Big\|\sum_{j=1}^N x_{m_j} - \sum_{j=2}^Nx_{m_{j-1}}\Big\|
    = N^{-1/2}
    < 1/n
\]
and $(A_n X B_n)^* = \sum_{j=2}^N x_{m_j}x_{m_{j-1}}^* = \sum_{j=1}^{N-1} x_{m_{j + 1}}x_{m_j}^*$,
so that
\[
  \|(I - A_n X B_n)^*y_n\| 
    = N^{-1/2}\Big\|\sum_{j=1}^N x_{m_j} - \sum_{j=1}^{N-1}x_{m_{j+1}}\Big\|
    = N^{-1/2}
    < 1/n.
\]
Note also that each of the $e_j, f_j, x_{m_j}$ for $1\le j \le N$ lie in the
range of $N_{m_N} - N_{m_1 - 1} \le N_{k_n} - N_{k_{n-1}}$. Thus
$A_n = (N_{k_n} - N_{k_{n-1}})A_n(N_{k_n} - N_{k_{n-1}})$,
$B_n = (N_{k_n} - N_{k_{n-1}})B_n(N_{k_n} - N_{k_{n-1}})$, and
$y_n = (N_{k_n} - N_{k_{n-1}})y_n$.

Having met all the requirements, the induction proceeds as stated, and we let
$A := \sum_{n=1}^\infty A_n$ and $B := \sum_{n=1}^\infty B_n$. Clearly
for any fixed $i\in\NN$,
\[
\max\{\|X_i y_n\|, \|X^*_i y_n\|\} < 1/n
\]
for all sufficiently large $n$ and so $X_iy_n, X^*_iy_n\rightarrow 0$.
Moreover since $A$ and $B$ are block diagonal with respect to $N_{n_k}$, as
is $X$, it follows that $(I-A X B)y_n=(I-A_n X B_n)y_n\rightarrow 0$
and $(I-A X B)^*y_n=(I-A_n X B_n)^*y_n\rightarrow 0$ and we are done.
\end{proof}

\begin{lemma}\label{chain-of-type-s-lemma}
Fix a nest $\N$ and a projection $N\in\N$ and
let $\S_i$ ($i\in\NN$) be a countable collection of countable sets of Type-S which
form a chain (i.e.\ for any $i,j$, either $S_i\subseteq S_j$ or $S_j\subseteq S_i$).
Then $\bigcup_{i\in\NN}\S_i$ is also of Type-S.
\end{lemma}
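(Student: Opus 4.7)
The plan is a straightforward diagonal argument. We may as well assume $N = N^- > 0$, since otherwise no strictly increasing sequence in $\N$ converges to $N$ and the Type-S property is vacuous. Enumerate the countable union as $\bigcup_{i\in\NN}\S_i = \{Y_1,Y_2,\ldots\}$. For each $n$, the chain property applied to the finitely many $\S_i$ hosting $Y_1,\ldots,Y_n$ yields some $j(n)\in\NN$ with $\{Y_1,\ldots,Y_n\}\subseteq\S_{j(n)}$; the Type-S witness of $\S_{j(n)}$ will supply the $n$th term of a common witness for the whole union.

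To drive the constructed sequence towards $N$, I first fix an auxiliary strictly increasing sequence $(P_n)\subseteq\N$ with $P_n\to N$. I then inductively build nest projections $N_0 < N_1 < \cdots < N$ together with unit vectors $x_n = (N_n - N_{n-1})x_n$ as follows. Having chosen $N_{n-1}$, let $M_1 < M_2 < \cdots$ in $\N$ with $M_k\to N$ and unit vectors $v_k\in\ran(M_k - M_{k-1})$ be a Type-S witness for $\S_{j(n)}$, so $Y_m v_k\to 0$ and $Y_m^* v_k\to 0$ for every $m\le n$. Because $M_{k-1}\to N$ and only finitely many vector limits must be driven below $1/n$, I can choose $k$ so large that $M_{k-1}\ge\max(N_{n-1},P_n)$ and $\max\{\|Y_m v_k\|,\|Y_m^* v_k\|\}<1/n$ for all $m\le n$. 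Setting $N_n := M_k$ and $x_n := v_k$, the inequality $M_k - M_{k-1}\le N_n - N_{n-1}$ forces $x_n\in\ran(N_n - N_{n-1})$, while $N_n\ge P_n$ forces $N_n\to N$.

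To conclude, fix any $Y_m\in\bigcup_i\S_i$. For every $n\ge m$ the construction yields $\|Y_m x_n\|,\|Y_m^* x_n\|<1/n$, so $Y_m x_n\to 0$ and $Y_m^* x_n\to 0$, verifying Type-S for the union. The only technical point is reconciling the support condition $x_n\in\ran(N_n - N_{n-1})$ with the progress condition $N_n\to N$; both are handled by the twin lower bounds $M_{k-1}\ge N_{n-1}$ (support) and $M_{k-1}\ge P_n$ (progress) imposed when selecting $k$.
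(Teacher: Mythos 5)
Your proof is correct and follows essentially the same routine diagonal argument as the paper: enumerate the union, use the chain condition to place each finite initial segment inside a single $\S_{j(n)}$, and inductively extract one witness pair $(N_n,x_n)$ per step. The only cosmetic difference is that the paper forces $N_n\to N$ by requiring $d(N_n,N)<1/n$ for a metric $d$ of the strong operator topology on $\N$, whereas you achieve the same thing by dominating an auxiliary sequence $P_n\nearrow N$.
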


\begin{proof}
The proof is a routine countability argument.
Recall that the strong operator topology on $\N$ is metrizable; let $d$ be a metric for it.
Enumerate $\bigcup_{i\in\NN}\S_i$ and let the sets $\C_n$ ($n\in\NN$) consist of the
first $n$ terms of that enumeration.
Fix $n$ and suppose $N_m$ and $x_m$
have been chosen for $m < n$ so that $N_1<N_2<\cdots<N_{n-1} < N$,
$x_m = (N_m - N_{m-1})x_m$, and $\max\{\|X x_m\|, \|X^* x_m\|\} < 1/m$ for all $X\in \C_m$.
Each $X\in\C_n$ belongs to some $\S_i$ and since $\C_n$ is finite and $\{S_i\}$ is a chain,
$\C_n$ is contained in some $\S_i$. Therefore $\C_n$ is of Type-S. Using this fact,
we can find $N_{n-1} < N_n < N$ with $d(N_n, N) < 1/n$ and $x_n = (N_n - N_{n-1})x_n$
such that $\max\{\|X x_n\|, \|X^* x_n\|\} < 1/n$ for all $X\in \C_n$.
Continue this inductively to construct a strictly increasing sequence $N_n\rightarrow N$
and $x_n = (N_n - N_{n-1})$ for all $n\in\NN$ such that
$\max\{\|X x_n\|, \|X^* x_n\|\} < 1/n$ for all $X\in \C_n$
(taking $k_0=0$ to get the induction started). Each $X\in\bigcup_{i\in\NN}\S_i$
belongs to $\C_n$ for all sufficiently large $n$, and so the result follows with the
vectors so chosen.
\end{proof}

\begin{theorem}\label{diagonal-are-primitive-thm}
Assume the Continuum Hypothesis and let $\N$ be a nest. Then the diagonal ideals of $\tn$
are primitive ideals.
\end{theorem}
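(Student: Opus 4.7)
The plan is to establish primitivity of each diagonal ideal $\I_N^-$ (with $N>0$); the case $\I_N^+$ is entirely dual. By Remark~\ref{prim-ideals-remark} it is enough to exhibit a maximal left ideal $L$ of $\tn$ with $\I_N^-\subseteq L$ and such that for every $X\in\tn\setminus\I_N^-$ there exist $A,B\in\tn$ with $I-AXB\in L$: the associated primitive ideal $P=\{Y\in\tn:Y\tn\subseteq L\}$ will then contain $\I_N^-$ (a two-sided ideal inside $L$) and miss every $X\notin\I_N^-$, hence coincide with $\I_N^-$.

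The atom case $N>N^-$ is immediate, since $\I_N^-=\{X:(N-N^-)X(N-N^-)=0\}$ is the kernel of the unital surjection $X\mapsto(N-N^-)X(N-N^-)$ onto the primitive algebra $B((N-N^-)\H)$, and so is primitive without any use of CH. The substantive case is $N=N^-$, where I would invoke CH directly. Since $\H$ is separable we have $|\tn|=2^{\aleph_0}=\aleph_1$, so enumerate $\tn\setminus\I_N^-=\{X_\alpha:\alpha<\omega_1\}$ and construct by transfinite recursion on $\alpha<\omega_1$ an increasing chain of countable Type-S sets $\S_\alpha$: set $\S_0:=\emptyset$; at a successor stage apply Lemma~\ref{countable-extension-lemma} to $\S_\alpha$ and $X_\alpha$ to obtain $A_\alpha,B_\alpha\in\tn$ and put $\S_{\alpha+1}:=\S_\alpha\cup\{I-A_\alpha X_\alpha B_\alpha\}$, which is countable and Type-S; at a limit $\alpha<\omega_1$, use the fact that every such ordinal has countable cofinality to choose a cofinal $\omega$-subchain, apply Lemma~\ref{chain-of-type-s-lemma} to that subchain, and conclude $\S_\alpha:=\bigcup_{\beta<\alpha}\S_\beta$ is Type-S.

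Let $\S:=\bigcup_{\alpha<\omega_1}\S_\alpha$ and let $L_0$ be the left ideal of $\tn$ generated by $\S\cup\I_N^-$. Once we know $L_0$ is proper, Zorn's lemma extends it to a maximal left ideal $L$, and since $I-A_\alpha X_\alpha B_\alpha\in\S\subseteq L$ for every $\alpha$, the primitive ideal associated to $L$ is precisely $\I_N^-$ by the first paragraph. The whole proof therefore reduces to showing $I\notin L_0$. Suppose for contradiction $I=\sum_{i=1}^k Y_iT_i+W$ with $T_i\in\S$, $Y_i\in\tn$ and $W\in\I_N^-$. Finiteness of the sum places all $T_i$ in a single $\S_\alpha$; let $(N_n,x_n)$ be the corresponding Type-S witnesses. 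Then $\|T_ix_n\|\to 0$ gives $\|\sum Y_iT_ix_n\|\to 0$. On the other hand, since $i_N^-(W)=0$ and the sequence $N_{n-1}\nearrow N$ in $\N$ eventually dominates any fixed $M<N$, one obtains $\|(N-N_{n-1})W(N-N_{n-1})\|\to 0$; combining this with $x_n\in\ran(N-N_{n-1})$ and the fact that $W\in\tn$ preserves $\ran N$, the decomposition of $x_n-Wx_n$ into its $N_{n-1}$ and $(N-N_{n-1})$ components yields $\|x_n-Wx_n\|\ge 1-\|(N-N_{n-1})W(N-N_{n-1})\|\to 1$, contradicting $\|Ix_n-Wx_n\|\to 0$.

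The main obstacle I anticipate is exactly this propriety argument. The Type-S machinery furnished by Lemma~\ref{countable-extension-lemma} only controls members of $\S$, so the proof must be arranged so that the same witnessing sequence $x_n$ that annihilates the $T_i$'s also forces $Wx_n$ to differ from $x_n$ by a unit amount for every $W\in\I_N^-$; this requires a separate order-theoretic argument, using only the definition of $i_N^-$ and monotone behaviour in the nest, to connect the intrinsic block-vanishing of $\I_N^-$ with the externally chosen sequence $N_n\nearrow N$.
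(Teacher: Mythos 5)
Your proposal follows the paper's strategy almost exactly: the atom case is handled by compression onto the atom, and for $N=N^-$ you run the same CH-indexed transfinite recursion, using Lemma~\ref{countable-extension-lemma} at successor stages and Lemma~\ref{chain-of-type-s-lemma} at limits to keep every $\S_\alpha$ countable and of \types. Where you genuinely differ is in how the resulting primitive ideal is identified with $\I_N^-$. The paper lets $\L$ be a maximal left ideal containing only the family $\{I-A_XX B_X\}$, deduces $\P\subseteq\I_N^-$, and then invokes Theorem~\ref{prim-contains-diagonal-thm} together with the incomparability of distinct diagonal ideals to force equality. You instead adjoin $\I_N^-$ to the generating set and prove properness directly: your estimate
$\|(I-W)x_n\|\ge\|(N-N_{n-1})(I-W)x_n\|\ge 1-\|(N-N_{n-1})W(N-N_{n-1})\|\rightarrow 1$
for $W\in\I_N^-$ is correct (using $x_n=(N-N_{n-1})x_n$ and the fact that $N_n\nearrow N$ eventually dominates any fixed $M<N$, so $i^-_N(W)=0$ controls the tail blocks), and it buys a self-contained identification $\P=\I_N^-$ that does not rely on Theorem 4.9 and Lemma 4.7 of \cite{Ringrose:OnSoAlOp}. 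That is a legitimate, and arguably cleaner, variant.

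The one place you are too quick is the claim that ``the case $\I_N^+$ is entirely dual.'' The map $X\mapsto X^*$ carries $\I_N^+$ to $\I_{N^\perp}^-$ in $\T(\N^\perp)$ but converts left ideals into right ideals, so naive dualization only shows that $\I_N^+$ is \emph{right}-primitive, which is not what the theorem asserts. To obtain left-primitivity of $\I_N^+$ one must show that the same family $\{I-A_XXB_X\}$ (together with the relevant diagonal ideal, in your variant) generates a proper \emph{right} ideal as well; this is precisely why the \types\ condition demands both $Xx_n\rightarrow0$ and $X^*x_n\rightarrow0$, and the paper states this reduction explicitly. Your construction does in fact supply what is needed --- the witnesses control adjoints, and your properness estimate dualizes since $\|(N-N_{n-1})W^*(N-N_{n-1})\|=\|(N-N_{n-1})W(N-N_{n-1})\|$ --- but the step has to be stated and checked rather than dismissed as formal duality.
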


\begin{proof}
The result is trivial when the diagonal ideal is of type $\I_N^-$ with $N^- < N$ or
$\I_N^+$ with $N^+ > N$. For in either case the diagonal ideal is the kernel of
the representation $X\mapsto E X |_{E\H}$ where $E$ is an atom of $\N$ and whose range is
therefore all of $B(E\H)$, and so is irreducible. For the remainder of the proof, consider
only diagonal ideals which are not of this type.

Next, let $\I$ be a diagonal ideal of $\tn$ and suppose that $\I=\I^-_N$ for some $N = N^->0$ in $\N$.
It is enough to construct operators $A_X, B_X\in\tn$ for each operator $X\in\tn \setminus\I^-_N$,
such that the collection $\{I - A_X X B_X : X\in\tn\setminus\I^-_N \}$ generates a proper left
ideal of $\tn$. For then there is a maximal left ideal $\L$ which contains this family of operators
and the kernel of the left regular representation of $\tn$ on $\tn/\L$ is a primitive ideal, $\P$,
which by Remark~\ref{prim-ideals-remark} must exclude all $X\not\in\I^-_N$. Thus $\P\subseteq\I_N^-$.
Since every primitive ideal contains a diagonal ideal \cite[Theorem 4.9]{Ringrose:OnSoAlOp} and
the distinct diagonal ideals are incomparable \cite[Lemma 4.7]{Ringrose:OnSoAlOp},
it follows that $\I_N^- = \P$ and so is primitive.

Now consider the case when $\I = \I_N^+$ for some $N=N^+<I$ in $\N$. By the same reasoning,
it is enough to find $A_X, B_X\in\tn$ such that $\{I - A_X XB_X : P\in\tn\setminus\I_N^+\}$
is contained in a proper left ideal of $\tn$. To do this, we take adjoints and seek
$A_X, B_X \in \T(\N)^* = \T(\N^\perp)$ such that
$\{I - A_X X B_X : X\in\T(\N^\perp)\setminus\I_{N^\perp}^-\}$
is contained in a proper right ideal of $\T(\N^\perp)$. Since $\N$ is an arbitrary nest,
we can replace $\N^\perp$ with $\N$, to recast this as a second problem about $\I_N^-$ in $\tn$:
namely, to find $A_X, B_X\in\tn$ for each $X\in\tn\setminus\I^-_N$, such that
$\{I - A_X X B_X : X\in\tn\setminus\I^-_N \}$ generates a proper \emph{right}
ideal of $\tn$. We shall show in fact that the choice can be made so that the same set
of operators $\{I - A_X X B_X\}$ serves to generate both a proper left ideal and
a proper right ideal.
We shall construct these operators using transfinite recursion.

The cardinality of $\tn\setminus\I^-_N$ is equal to the cardinality of the contiuum
since every operator can be represented as a countable array of complex numbers.
Since we are assuming the Continuum Hypothesis, $\tn\setminus\I^-_N$ has cardinality $\aleph_1$
and so it can be put in bijective correspondence with the set of ordinals $a < \omega_1$
(where $\omega_1$ denotes the first uncountable ordinal).
Write this correspondence as $X_a$ ($a<\omega_1$).
To run the transfinite recursion, we suppose that for
some $a<\omega_1$ we have operators $A_b, B_b$ in $\tn$ for all $b<a$, and describe how to
obtain $A_a, B_a$. First, if the set $\{I - A_bX_bB_b : b < a \}$ is of \types\ then observe
that $\{ I - A_bX_bB_b : b < a \}$ is a countable collection and use
Lemma~\ref{countable-extension-lemma} to find $A_a, B_a \in \tn$ such that
$\{I - A_bX_bB_b : b \le a \}$ is also of \types.
On the other hand, if it happens that $\{I - A_bP_bB_b : b < a \}$ is not of \types\
then set $A_a = B_a = 0$. (This is a sink terminal state which we shall
prove momentarily is never in fact reached.)

Note that formally Lemma~\ref{countable-extension-lemma} assumes a countably infinite
collection of predecessors. However the case of finite $a$, or even $a=1$, can be covered
by padding the collection of predecessors with countably many repeated zeros. Note also
the recursion step involves an arbitrary choice of operators, which can easily be
resolved using the Axiom of Choice.

Having described a rule to construct $A_a, B_a$ with $(A_b, B_b)_{b < a}$ given, we apply the
principle of transfinite recursion to obtain $(A_a, B_a)_{a < \omega_1}$ where the transition
rule from the previous paragraph applies for every $a<\omega_1$. We next note that
for every $a < \omega_1$, $\S_a := \{I - A_bX_bB_b : b \le a \}$ is of \types.
For if this were not true, then we could find the least $a$ such that $\S_a$ is not
\types. Thus for each of the countably many $b<a$, $\S_b$ is countable and of \types\
and so by Lemma~\ref{chain-of-type-s-lemma}, $\bigcup_{b<a} \S_b$ is \types.
But $\bigcup_{b<a} \S_b = \{I - A_bX_bB_b : b < a \}$ and so
by the recursion step, $\S_a = \{I - A_bX_bB_b : b \le a \}$ is also \types.
Thus, by contradiction, each $\S_a$ is of \types\ and, in particular, generates a
proper left ideal of $\tn$ and a proper right ideal of $\tn$.
Now in general, the union of any chain of sets, each of which generates a proper 
left (resp.\ right) ideal, will also generate a proper left (resp.\ right) ideal.
Thus,
$\{I - A_aX_aB_a : a < \omega_1 \} = \bigcup_{a<\omega_1} \S_a$ generates a proper
left ideal and a proper right ideal, and the result follows.
\end{proof}

\begin{corollary}\label{diagonal-are-right-primitive-cor}
Assuming the Continuum Hypothesis, the diagonal ideals of $\tn$ are also right-primitive ideals,
that is to say, the annihilators of simple right modules.
\end{corollary}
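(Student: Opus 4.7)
The strategy is to piggyback directly on Theorem~\ref{diagonal-are-primitive-thm}, exploiting the author's mid-proof remark that the operators $A_X, B_X$ constructed there make $\{I - A_X X B_X\}$ a family of \types, hence contained in a proper right ideal of $\tn$ as well as in a proper left ideal.

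The atomic cases ($\I_N^-$ with $N > N^-$ or $\I_N^+$ with $N^+ > N$) are handled exactly as at the start of Theorem~\ref{diagonal-are-primitive-thm}: compression onto the atom $E$ surjects $\tn$ onto $B(E\H)$ with kernel $\I$, and any simple right $B(E\H)$-module pulls back to a simple right $\tn$-module with annihilator $\I$. For the main case $\I = \I_N^-$ with $N = N^- > 0$ (the $\I_N^+$ variant reducing to it by the adjoint manoeuvre of Theorem~\ref{diagonal-are-primitive-thm}), take the operators $A_X, B_X$ furnished by that theorem and set $\S := \{I - A_X X B_X : X \in \tn \setminus \I_N^-\}$. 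Since $\S$ is of \types, the witnessing unit vectors $(x_n)$ satisfy $Y^* x_n \to 0$ for every $Y \in \S$, so $\S$ lies in the proper right ideal $\{Y \in \tn : Y^* x_n \to 0\}$. Extend this to a maximal right ideal $\R$ and let $\P'$ be the annihilator of the simple right $\tn$-module $\tn/\R$. By the right-handed version of Remark~\ref{prim-ideals-remark}, $X \notin \P'$ iff $I - aXb \in \R$ for some $a, b \in \tn$, and the choice $(a,b) = (A_X, B_X)$ confirms this for every $X \notin \I_N^-$; hence $\P' \subseteq \I_N^-$.

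For the reverse containment I would apply Theorem~\ref{prim-contains-diagonal-thm} to the adjoint algebra $\T(\N^\perp)$. Under the anti-isomorphism $X \mapsto X^*$ from $\tn$ onto $\T(\N^\perp)$, right primitive ideals of $\tn$ correspond to left primitive ideals of $\T(\N^\perp)$, and a direct calculation (using the self-adjointness of nest intervals and the reversal of order under complementation) shows that the diagonal ideal $\I_N^-$ of $\tn$ corresponds to $\I_{N^\perp}^+$ of $\T(\N^\perp)$. Hence $\P'$ contains a unique diagonal ideal, and the incomparability of distinct diagonal ideals (Ringrose, Lemma~4.7) forces it to be $\I_N^-$, giving $\P' = \I_N^-$. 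The only real work beyond replaying Theorem~\ref{diagonal-are-primitive-thm} is this adjoint-symmetry bookkeeping, which is routine; the genuinely hard part was already done inside the proof of the theorem, where the Type-S construction was engineered to yield proper ideals on both sides simultaneously.
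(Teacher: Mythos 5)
Your argument is correct in substance but takes a much longer road than the paper does. The paper's entire proof is a one-line symmetry observation: the conjugate-linear anti-isomorphism $X\mapsto X^*$ carries $\tn$ onto $\T(\N^\perp)$, carries diagonal ideals to diagonal ideals (your computation that $\I_N^-$ in $\tn$ corresponds to $\I^+_{N^\perp}$ in $\T(\N^\perp)$ is exactly the relevant identity), and converts left modules into right modules; since Theorem~\ref{diagonal-are-primitive-thm} applies to the nest $\N^\perp$ just as well as to $\N$, right-primitivity of the diagonal ideals of $\tn$ follows formally from left-primitivity of the diagonal ideals of $\T(\N^\perp)$, with no need to reopen the construction. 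You instead re-enter the proof of Theorem~\ref{diagonal-are-primitive-thm} and harvest the right-ideal half of the Type-S machinery, which does work, because that proof was deliberately engineered so that the same family $\{I-A_XXB_X\}$ generates a proper right ideal as well as a proper left ideal (the author needed this internally to dispose of the $\I_N^+$ case). Your route buys nothing beyond the paper's, and it pays for the conclusion by depending on the internals of the theorem's proof rather than only on its statement.

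One imprecision worth correcting: you assert that the full family $\S=\{I-A_XXB_X : X\in\tn\setminus\I_N^-\}$ is of \types, witnessed by a single sequence of unit vectors $(x_n)$. The transfinite recursion only shows that each countable initial segment $\S_a$ is of \types; the uncountable union is never shown to admit a single witnessing sequence, and Lemma~\ref{chain-of-type-s-lemma} is expressly restricted to countable chains --- this is precisely where the Continuum Hypothesis earns its keep. What the proof actually delivers is that $\S$ is the union of a chain of sets each generating a proper right ideal, hence itself generates a proper right ideal. That weaker statement is all you need: extend the right ideal generated by $\S$ to a maximal right ideal $\R$ and proceed exactly as you do. So the slip is cosmetic, but as written the claim of a single witnessing sequence for all of $\S$ is unjustified.
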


\begin{proof}
The conjugate-linear anti-isomorphism $X\mapsto X^*$ maps $\tn$ to $\T(\N^\perp)$,
maps diagonal ideals to diagonal ideals, and converts left modules into right modules.
\end{proof}

We remark in passing that Theorem~\ref{diagonal-are-primitive-thm}
does provide a new proof of Ringrose's characterization of the Jacobson radical
of a nest algebra. For in view of Theorem~\ref{prim-contains-diagonal-thm}
\[
  \bigcap_{\I\text{ diagonal}} \I
    \subseteq \bigcap_{\P\text{ primitive}} \P,
\]
and the reverse inclusion follows from Theorem~\ref{diagonal-are-primitive-thm}.
Insofar as our result assumes the Continuum Hypothesis and also assumes
$\H$ is separable, this is, of course, substantially less general
than Ringrose's original proof.

\section{The left ideals of a nest algebra}\label{left-ideals-sect}

In this section we study the left ideals of nest algebras.
Definition~\ref{constructible-def} gives a method of specifying
left ideals and in Theorem~\ref{every-ideal_is-constructible-thm}
we shall see that every left ideal can be specified in this way.
We then introduce (Definition~\ref{strongly-constructive-def})
a stronger property which specifies many closed left ideals,
including the maximal left ideals. This leads to insights into the
structure of left ideals (Proposition~\ref{max-left-contains-projections-prp})
which we apply in the following sections.

\begin{definition}\label{constructible-def}
Let $\L$ be a left ideal of $\tn$. Say that $\L$ is \emph{constructible} if
there is a net indexed by a directed set $A$ consisting of pairs
$(N_\alpha, x_\alpha)$
of projections $N_\alpha \in \N$ and vectors $x_\alpha \in \H$ such that
\[
  \L = \{ X \in \tn : \lim_{\alpha \in A} \|(I - N_\alpha) X x_\alpha\| = 0 \}
\]
for every $X\in \tn$.
\end{definition}

\begin{lemma}\label{proper-or-not-lemma}
$\tn$ is itself a constructible ideal and, in general, the constructible ideal,
$\L$, specified by the net $(N_\alpha, x_\alpha)_{\alpha \in A}$ is equal to $\tn$
if and only if $\lim_\alpha N_\alpha^\perp x_\alpha = 0$.
\end{lemma}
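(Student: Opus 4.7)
The plan is to handle both parts by exploiting the defining invariance property of $\tn$: for any $X \in \tn$ and any $N \in \N$, we have $(I - N) X N = 0$, hence $(I - N) X = (I - N) X (I - N)$.

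For the first claim, I would simply exhibit a concrete net witnessing that $\tn$ is constructible. The most economical choice is the constant net with $N_\alpha = I$ (or equivalently $x_\alpha = 0$), indexed by any directed set. Since $I \in \N$ and $I - N_\alpha = 0$, the condition $\|(I - N_\alpha) X x_\alpha\| \to 0$ is satisfied trivially by every $X \in \tn$, so the associated constructible ideal is all of $\tn$.

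For the equivalence, the ``only if'' direction is immediate: if $\L = \tn$, then in particular $I \in \L$, and applying the defining limit condition with $X = I$ yields $\lim_\alpha \|N_\alpha^\perp x_\alpha\| = 0$. For the ``if'' direction, suppose $\lim_\alpha N_\alpha^\perp x_\alpha = 0$ and fix an arbitrary $X \in \tn$. Using the invariance identity above with $N = N_\alpha$, I would rewrite
\[
  (I - N_\alpha) X x_\alpha = (I - N_\alpha) X (I - N_\alpha) x_\alpha,
\]
and then bound
\[
  \|(I - N_\alpha) X x_\alpha\| \le \|X\| \, \|N_\alpha^\perp x_\alpha\| \longrightarrow 0.
\]
Thus every $X \in \tn$ lies in $\L$, giving $\L = \tn$.

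There is no real obstacle here; the only subtlety is recognizing that the defining condition for a constructible ideal, although it only involves left multiplication of $x_\alpha$ by $X$, automatically ``sees'' the upper-triangular structure of $X$ through the co-invariance $(I-N_\alpha)X = (I-N_\alpha)X(I-N_\alpha)$. This collapses both directions to one-line arguments.
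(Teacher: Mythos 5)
Your proof is correct and follows essentially the same route as the paper: the ``if'' direction uses the co-invariance identity $N_\alpha^\perp X = N_\alpha^\perp X N_\alpha^\perp$ to bound $\|N_\alpha^\perp X x_\alpha\|$ by $\|X\|\,\|N_\alpha^\perp x_\alpha\|$, and the ``only if'' direction tests the defining condition on $X=I$. The only (harmless) addition is your explicit constant net witnessing that $\tn$ is constructible, which the paper leaves implicit since any net with $N_\alpha^\perp x_\alpha\to 0$ already does the job.
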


\begin{proof}
If $N_\alpha^\perp x_\alpha \rightarrow 0$ then, for any fixed $X\in \tn$,
\(
  \| N_\alpha^\perp X x_\alpha \|
  = \| N_\alpha^\perp X N_\alpha^\perp x_\alpha \|
  \le \| X \|  \| N_\alpha^\perp x_\alpha \|
\)
and so $X \in \L$. Conversely, if $N_\alpha^\perp x_\alpha \not\rightarrow 0$, then $I \not\in \L$
and so $\L$ is proper.
\end{proof}

Note that if $(N_\alpha, x_\alpha)_{\alpha\in A}$ is a net in $\N\times\H$ and $X\in\tn$,
then $N_\alpha^\perp X x_\alpha = N_\alpha^\perp X N_\alpha^\perp x_\alpha$,
for all $\alpha$ and so without loss we can always assume that
$x_\alpha = N_\alpha^\perp x_\alpha$.

The following interpolation result of Katsoulis, Moore, and Trent
enables us to see that all left ideals are constructible.
In this context we remark that the results of \cite{KatsoulisMooreTrent:InNeAl}
have a precursor in Lance's \cite[Theorem 2.3]{Lance:SoPrNeAl}, introduced to study
the radical and diagonal ideals.

\begin{theorem}\label{kmt-thm}
\cite[Theorem 4]{KatsoulisMooreTrent:InNeAl}.
Let $X_1, \ldots, X_n$ and $Y$ be in $\tn$. Then there are $A_1, \ldots, A_n$
in $\tn$ such that
\[
  Y = \sum_{i=1}^n A_i X_i
\]
if and only if
\begin{equation}\label{kmt-criterion}
  \sup \left\{
    \frac{\|N^\perp Y x\|^2}{\sum_{i=1}^n \|N^\perp X_i x\|^2} : N \in \N, x \in \H 
  \right\} < \infty
\end{equation}
(where $0/0$ is interpreted as $0$).
\end{theorem}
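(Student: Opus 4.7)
The forward implication is a direct Cauchy--Schwarz computation. If $Y = \sum_{i=1}^n A_i X_i$ with $A_i \in \tn$, each $A_i$ leaves $\ran(N)$ invariant, so $N^\perp A_i = N^\perp A_i N^\perp$; hence $N^\perp Y x = \sum_i N^\perp A_i (N^\perp X_i x)$, and Cauchy--Schwarz yields
\[
  \|N^\perp Y x\|^2 \le \Bigl(\sum_i \|A_i\|^2\Bigr) \sum_i \|N^\perp X_i x\|^2,
\]
so the supremum in~(\ref{kmt-criterion}) is bounded by $\sum_i \|A_i\|^2 < \infty$.

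For the converse I plan to reduce the $n$-tuple problem to a single-operator factorization on an amplified space. Let $\H^{(n)} := \H \oplus \cdots \oplus \H$ ($n$ summands) carry the amplified nest $\widetilde{\N} := \{N^{(n)} : N \in \N\}$, where $N^{(n)} := N \oplus \cdots \oplus N$, and assemble the $X_i$ into the column operator $\mathbf{X} : \H \to \H^{(n)}$ sending $x$ to $(X_1 x, \ldots, X_n x)^T$. Since each $X_i \in \tn$, $\mathbf{X}$ intertwines $\N$ with $\widetilde{\N}$, and finding $A_i \in \tn$ with $Y = \sum_i A_i X_i$ is equivalent to finding a row operator $\mathbf{A} = [A_1, \ldots, A_n] : \H^{(n)} \to \H$ satisfying $Y = \mathbf{A}\mathbf{X}$ and intertwining $\widetilde{\N}$ with $\N$ (the latter being equivalent to $A_i \in \tn$ for each $i$). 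A short computation shows that the hypothesis~(\ref{kmt-criterion}) is precisely the operator inequality
\[
  Y^* N^\perp Y \;\le\; C\, \mathbf{X}^* (N^{(n)})^\perp \mathbf{X} \qquad\text{for every } N \in \N.
\]

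The heart of the proof is then a nest-algebraic analogue of Douglas's factorization lemma for this data. I would first define $\mathbf{A}_0 : \ran(\mathbf{X}) \to \H$ by $\mathbf{A}_0(\mathbf{X} x) := Y x$; the $N = 0$ case of the inequality makes $\mathbf{A}_0$ well-defined and bounded, so it extends by continuity to $\overline{\ran(\mathbf{X})}$. The principal obstacle is extending $\mathbf{A}_0$ to all of $\H^{(n)}$ while still intertwining the nests: a naive zero-extension off $\overline{\ran(\mathbf{X})}$ need not be nest-compatible, and this is precisely where the full family of inequalities (for all $N$, not just $N=0$) must enter. My plan is to invoke Arveson's distance formula for the intertwiner space, $\dist(\mathbf{B}, \T(\widetilde{\N}, \N)) = \sup_{N \in \N} \|N^\perp \mathbf{B}\, N^{(n)}\|$, applied to an arbitrary bounded extension $\mathbf{B}$ of $\mathbf{A}_0$: the hypothesis bounds the right-hand side, after which $\mathbf{B}$ can be corrected by subtracting an element of $\T(\widetilde{\N}, \N)$ that vanishes on $\ran(\mathbf{X})$, producing the desired $\mathbf{A}$. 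Arranging the correction so the factorization $Y = \mathbf{A}\mathbf{X}$ survives while nest compatibility is achieved is the delicate technical point, which I anticipate handling via a weak-$*$ compactness or Hahn--Banach argument on the convex set of admissible extensions.
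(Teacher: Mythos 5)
This theorem is quoted in the paper from Katsoulis--Moore--Trent without proof, so your argument has to stand on its own. Your forward direction is correct and complete, and your reduction of the converse to a one-operator factorization $Y=\mathbf{A}\mathbf{X}$ with $\mathbf{A}$ in the intertwiner space $\T(\widetilde{\N},\N)$ of the ampliated nest is also correct (and is indeed how the cited source deduces its Theorem~4 from its one-operator interpolation theorem). But that one-operator statement \emph{is} the theorem, and your sketch of it has a genuine gap at the ``correction'' step.

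The distance formula computes $\dist(\mathbf{B},\T(\widetilde{\N},\N))$, and one can check that for the zero-extension $\mathbf{B}$ of $\mathbf{A}_0$ this distance is at most $C^{1/2}$, since for $\xi\in\H^{(n)}$ one has $\|N^\perp\mathbf{B}N^{(n)}\xi\|\le C^{1/2}\|(N^{(n)})^\perp P N^{(n)}\xi\|$ where $P$ projects onto $\overline{\ran\mathbf{X}}$. The trouble is that a nearby intertwiner $\mathbf{A}'$ satisfies only $\|\mathbf{A}'-\mathbf{B}\|\le C^{1/2}$; nothing controls $(\mathbf{A}'-\mathbf{B})\mathbf{X}$, so the factorization $Y=\mathbf{A}'\mathbf{X}$ is destroyed. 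What you actually need is finiteness of the distance from $\mathbf{B}$ to the affine set $\{\mathbf{S}\in\T(\widetilde{\N},\N):\mathbf{S}\mathbf{X}=Y\}$ --- in particular that this set is nonempty, which is precisely the conclusion. Likewise, a Hahn--Banach or weak-$*$ compactness argument on ``the convex set of admissible extensions'' presupposes that an admissible (bounded, intertwining, $\mathbf{A}_0$-extending) operator exists, which is again the conclusion; the argument is circular as proposed. The standard ways to close this are substantive: either (i) solve the problem exactly for each finite subnest $0=N_0<\dots<N_k=I$ with a bound depending only on the supremum in~(\ref{kmt-criterion}) and pass to a weak-$*$ cluster point along the net of finite subnests --- noting that the obvious staircase of Douglas solutions $\sum_j (N_j-N_{j-1})C_{j-1}(N_{j-1}^{(n)})^\perp$ does satisfy the factorization and the triangularity but has norm growing with $k$, so the finite case already requires a genuinely clever construction --- or (ii) invoke a commutant-lifting-type theorem for nest algebras. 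As written, the converse is not proved.
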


\begin{theorem}\label{every-ideal_is-constructible-thm}
Every left ideal of a nest algebra is constructible.
\end{theorem}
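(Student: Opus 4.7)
The plan is to leverage the Katsoulis--Moore--Trent interpolation theorem (Theorem~\ref{kmt-thm}): a vector $Y \in \tn$ lies in the left ideal generated by $X_1,\ldots,X_n \in \tn$ precisely when the supremum
\[
  \sup_{N\in\N,\, x\in\H} \frac{\|N^\perp Y x\|^2}{\sum_{i=1}^n \|N^\perp X_i x\|^2}
\]
is finite. Consequently, whenever $F \subseteq \L$ is finite and $Y \in \tn \setminus \L$, that supremum is infinite, so I can find a pair $(N, x)$ making every $\|N^\perp X x\|$ (for $X \in F$) arbitrarily small while $\|N^\perp Y x\| = 1$. My net will be assembled out of exactly these separating pairs.

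After disposing of $\L = \tn$ via Lemma~\ref{proper-or-not-lemma}, I would take as directed set the collection $A$ of triples $\alpha = (F, \epsilon, Y)$, where $F \subseteq \L$ is finite, $\epsilon > 0$, and $Y \in \tn \setminus \L$, ordered by $(F_1,\epsilon_1,Y_1) \leq (F_2,\epsilon_2,Y_2)$ iff $F_1 \subseteq F_2$ and $\epsilon_1 \geq \epsilon_2$, with no constraint on the $Y$-coordinate (so triples differing only in $Y$ are mutually comparable). For each $\alpha$, the hypothesis $F \subseteq \L \not\ni Y$ makes the KMT supremum infinite, and after rescaling $x$ I would pick $(N_\alpha, x_\alpha)$ satisfying $\|N_\alpha^\perp Y x_\alpha\| = 1$ and $\|N_\alpha^\perp X x_\alpha\| < \epsilon$ for every $X \in F$.

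Verifying that this net realizes $\L$ is then a routine check. For $Z \in \L$ and $\delta > 0$, every $\alpha \geq (\{Z\}, \delta, Y_0)$ forces $\|(I - N_\alpha) Z x_\alpha\| < \delta$, so $(I - N_\alpha) Z x_\alpha \to 0$. Conversely, for $W \in \tn \setminus \L$ the triples of the form $(F_0, \epsilon_0, W)$ dominate any prescribed $\alpha_0 \in A$, and at each such index $\|(I - N_\alpha) W x_\alpha\| = 1$, so the limit cannot vanish.

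The only real design decision is to let the ``separation target'' $Y$ vary freely in the index rather than monotonically, so that every $W \in \tn \setminus \L$ recurs cofinally in $A$; without this, the net would only detect finitely many exclusions at a time. Beyond that packaging point I do not foresee a substantive obstacle, since all the analytic content has already been isolated in Theorem~\ref{kmt-thm}.
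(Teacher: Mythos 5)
Your proposal is correct and follows essentially the same route as the paper: both rest entirely on Theorem~\ref{kmt-thm} to produce, for each finite $F\subseteq\L$, $\e>0$ and $Y\notin\L$, a pair $(N,x)$ with $\|N^\perp Yx\|=1$ and $\|N^\perp Xx\|<\e$ for $X\in F$, and both order the index set by $F\subseteq F'$, $\e\ge\e'$ so that such separating pairs occur cofinally. The only (immaterial) difference is packaging --- the paper puts $(N,x)$ into the index tuple subject to a compatibility constraint, whereas you keep the excluded target $Y$ in the index and select $(N_\alpha,x_\alpha)$ by a choice function.
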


\begin{proof}
Let $\L$ be a fixed left ideal of the nest algebra $\tn$ and take $A$ to be
the set of all 4-tuples $(F, \e, N, x)$ where $F$ is a finite subset of $\L$,
$\e >0$, $N \in \N$, and $x \in \H$, subject to the constraint that
$\| N^\perp X x \| < \e$ for all $X \in F$. 
This is a directed set if we say $(F, \e, N, x) \le (F', \e', N', x')$
when $F \subseteq F'$ and $\e \ge \e'$. For the relation is clearly reflexive and
transitive, and any pair of members of $A$, $(F, \e, N, x)$ and $(F', \e', N', x')$,
is dominated by $(F \cup F', \min\{\e, \e'\}, 0, 0)$. Define a net on $A$ with values in
$\N\times\H$ by the mapping which takes $\alpha := (F, \e, N, x) \in A$
to $(N_\alpha, x_\alpha)$ where $N_\alpha := N$, and $x_\alpha := x$.
We shall see that this net specifies $\L$ exactly.

On the one hand, trivially, if $X \in \L$ then for any $\e > 0$, the tuple
$\alpha_0 := (\{X\}, \e, 0, 0)$ belongs to $A$ and so for any $\alpha \ge \alpha_0$
$\| N_\alpha^\perp X x_\alpha \| < \e$. So, next, suppose on the other hand that
$Y \in \tn \setminus \L$.

Let an arbitrary $\alpha_0 := (\{X_1, \ldots, X_n\}, \e, M, x)$ in $A$ be given.
Since $Y \not\in \L$, there do not exist any $A_1, \ldots, A_n$ in $\tn$
such that $\sum_{i=1}^n A_i X_i = Y$.
Thus by Theorem~\ref{kmt-thm}, the supremum~(\ref{kmt-criterion}) is infinite, 
and so we can find $N \in \N$ and $y \in \H$ such that
\[
  \|N^\perp X_i y \| < \e \| N^\perp Y y \|
\]
for each $i = 1, \ldots, n$. Rescaling $y$, we obtain $N$ and $y$ such that
$\|N^\perp X_i y \| < \e$ and $\| N^\perp Y y \| = 1$. 
Thus $\beta := (\{X_1, \ldots, X_n\}, \e, N, y)$ is in $A$,
and we have $\beta \ge \alpha_0$ and $\|N_\beta^\perp Y x_\beta\| = 1$.
In other words, the net
$\| N_\alpha^\perp Y x_\alpha \|$ is frequently equal to 1, and so
$\| N_\alpha^\perp Y x_\alpha \| \not\rightarrow 0$.
\end{proof}

\begin{example}
The set $\F_\N$ of finite rank operators in $\tn$ is a two-sided ideal
of $\tn$ but is not norm-closed. We can specify this with the following
net. Let $A$ consist of the set of pairs $(F, x)$ where $F$ is a
finite-dimensional subspace of $\H$ and $x$ is a vector which is orthogonal
to $F$. For $\alpha=(F, x)\in A$ define $x_\alpha := x$ and $N_\alpha=0$.
Say $(F, x) \le (G, y)$ in $A$ if $F\subseteq G$. Clearly $T\in\tn$
belongs to $\F_\N$ if and only if there is a finite-dimensional space
$F$ such that $T$ vanishes on $F^\perp$. Since the vectors in the pairs
are unbounded, the condition $\| N_\alpha^\perp T x_\alpha\| < 1$ for
all $\alpha\ge(F, 0)$ is equivalent to $T$ vanishing on $F^\perp$.
\end{example}

\begin{example}
The set $\K_\N$ of compact operators in $\tn$ is a norm-closed two-sided
ideal of $\tn$. We can specify it with the following net, which is similar
to the previous example. Let $A$ consist of the set of pairs $(F, x)$ where $F$ is a
finite-dimensional subspace of $\H$ and $x$ is a unit vector which is orthogonal
to $F$. Again for $\alpha=(F, x)\in A$ define $x_\alpha := x$ and $N_\alpha=0$, and
say $(F, x) \le (G, y)$ in $A$ if $\F\subseteq G$. By the spectral theory,
an operator $T\in\tn$
belongs to $\K_\N$ if and only if for any $\e>0$ there is a finite-dimensional space
$F$ such that $\|T|_{F^\perp}\|<\e$, which is readily seen to be equivalent
to $\|N_\alpha^\perp T x_\alpha\|\rightarrow 0$.
\end{example}

The contrast between the last two examples, in which the net was unbounded in
one case and bounded in the other, motivates the following definition.

\begin{definition}\label{strongly-constructive-def}
Let $\L$ be a left ideal of $\tn$. Say that $\L$ is \emph{strongly constructible} if
it is constructible and a net $(N_\alpha, x_\alpha)_{\alpha \in A}$ specifying $\L$
can be found in which all the vectors $x_\alpha = N_\alpha^\perp x_\alpha$ have norm $1$.
\end{definition}

\begin{proposition}
Strongly constructible ideals are norm-closed.
\end{proposition}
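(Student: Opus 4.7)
The plan is a direct $\epsilon/2$ argument, where the uniform bound $\|x_\alpha\| = 1$ provided by strong constructibility is the essential ingredient.

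First I would fix a strongly constructible left ideal $\L$, choose a specifying net $(N_\alpha, x_\alpha)_{\alpha\in A}$ with $\|x_\alpha\|=1$ and $x_\alpha = N_\alpha^\perp x_\alpha$ for every $\alpha$, and suppose $X_n\to X$ in norm with each $X_n\in\L$. The goal is to show that $\lim_\alpha \|N_\alpha^\perp X x_\alpha\| = 0$, which by definition will place $X\in\L$.

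The key step is the estimate
\[
  \|N_\alpha^\perp X x_\alpha\|
  \le \|N_\alpha^\perp (X - X_n) x_\alpha\| + \|N_\alpha^\perp X_n x_\alpha\|
  \le \|X - X_n\| + \|N_\alpha^\perp X_n x_\alpha\|,
\]
where the second inequality crucially uses $\|x_\alpha\|=1$ together with $\|N_\alpha^\perp\|\le 1$, making the error from the approximation uniform in $\alpha$. Given $\epsilon>0$, one chooses $n$ large enough that $\|X - X_n\| < \epsilon/2$, and then uses $X_n\in\L$ to find $\alpha_0$ such that $\|N_\alpha^\perp X_n x_\alpha\| < \epsilon/2$ for all $\alpha\ge\alpha_0$. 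Combining these yields $\|N_\alpha^\perp X x_\alpha\| < \epsilon$ eventually, so $X\in\L$.

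There is no real obstacle here: the content of the proposition is essentially a remark on the definition, and the role of strong constructibility is precisely to ensure that the bound $\|X - X_n\| \cdot \|x_\alpha\|$ does not blow up with $\alpha$. Without normalization of $x_\alpha$ (as in the finite-rank example preceding the definition), the same argument would fail, which is consistent with $\F_\N$ being constructible but not norm-closed.
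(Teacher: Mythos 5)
Your argument is correct and is essentially identical to the paper's proof: the same $\epsilon/2$ triangle-inequality estimate, using the uniform bound $\|x_\alpha\|=1$ to make the approximation error independent of $\alpha$. Nothing further is needed.
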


\begin{proof}
Let $\L$ be strongly constructible and specified by $(N_\alpha, x_\alpha)_{\alpha \in A}$,
where $\| x_\alpha \| = 1$ for all $\alpha \in A$. Suppose the sequence of $X_n \in \L$
converges in norm to $X \in \tn$. Given $\e > 0$, find a fixed $n \in \NN$ such that
$\| X - X_n \| < \e/2$ and $\alpha_0 \in A$ such that
$\| N_\alpha^\perp X_n x_\alpha \| < \e/2$ for all $\alpha \ge \alpha_0$. Then
\[
  \| N_\alpha^\perp X x_\alpha \|
    \le \| X - X_n \|\|x_\alpha\| + \| N_\alpha^\perp X_n x_\alpha \|
    < \e
\]
\end{proof}

\begin{proposition}\label{maximal-strongly-constructible-prop}
The maximal left ideals of $\tn$ are strongly constructible.
\end{proposition}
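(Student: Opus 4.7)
The plan is to exhibit an explicit net with unit vectors that specifies $\L$, following the template of the proof of Theorem~\ref{every-ideal_is-constructible-thm}. I would index the net by the directed set $A$ of pairs $(F, \e)$ with $F$ a finite subset of $\L$ and $\e > 0$, ordered by $(F, \e) \le (F', \e')$ iff $F \subseteq F'$ and $\e \ge \e'$. For each such $\alpha = (F, \e)$, I would use the Axiom of Choice to select a projection $N_\alpha \in \N$ with $N_\alpha < I$ and a unit vector $x_\alpha \in N_\alpha^\perp \H$ satisfying $\|N_\alpha^\perp X x_\alpha\| < \e$ for every $X \in F$.

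The heart of the argument is showing such a selection is possible: given $X_1, \ldots, X_n \in \L$ and $\e > 0$, there must exist $N < I$ in $\N$ and a unit vector $x \in N^\perp \H$ with $\|N^\perp X_i x\| < \e$ for all $i$. I would argue by contradiction using Theorem~\ref{kmt-thm} applied to $Y := \e I$. If no such $(N, x)$ exists, then $\sum_i \|N^\perp X_i x\|^2 \ge \e^2 = \|N^\perp Y x\|^2$ for every $N < I$ and unit $x \in N^\perp \H$; after reducing the general case $(N, x) \in \N \times \H$ to this one by the substitution $x \mapsto N^\perp x/\|N^\perp x\|$ (with the $0/0$ convention handling $N = I$ or $N^\perp x = 0$), the supremum in~(\ref{kmt-criterion}) is at most $1$. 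Theorem~\ref{kmt-thm} then produces $A_1, \ldots, A_n \in \tn$ with $\e I = \sum_i A_i X_i$, placing $I$ in the left ideal $\L$ and contradicting properness.

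With the selection in hand, I would verify that the resulting net specifies $\L$. The inclusion $\L \subseteq \{X \in \tn : \|N_\alpha^\perp X x_\alpha\| \to 0\}$ is immediate: for $X \in \L$ and $\e > 0$, every $\alpha \ge \alpha_0 := (\{X\}, \e)$ has $X \in F_\alpha$ and $\e_\alpha \le \e$, so $\|N_\alpha^\perp X x_\alpha\| < \e$ by construction. For the reverse inclusion, suppose $X \in \tn \setminus \L$. By maximality of $\L$, $\L + \tn X = \tn$, so there exist $Y \in \L$ and $B \in \tn$ with $I = Y + BX$. For every $\alpha \ge (\{Y\}, 1/2)$, applying $N_\alpha^\perp$ on the left and $x_\alpha$ on the right of $I = Y + BX$, and using $N_\alpha^\perp B = N_\alpha^\perp B N_\alpha^\perp$ (since $B \in \tn$), we obtain
\[
  1 = \|x_\alpha\|
    \le \|N_\alpha^\perp Y x_\alpha\| + \|B\|\,\|N_\alpha^\perp X x_\alpha\|
    < \tfrac{1}{2} + \|B\|\,\|N_\alpha^\perp X x_\alpha\|,
\]
so $\|N_\alpha^\perp X x_\alpha\| > 1/(2\|B\|)$ eventually, and $X$ is excluded.

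The principal obstacle is the selection claim in the second paragraph: it is precisely the step that converts properness of $\L$ into a quantitative statement about the operators in $\L$, via Theorem~\ref{kmt-thm}. Once that is in place, the rest is standard bookkeeping with the directed set and the usual maximality trick $I = Y + BX$.
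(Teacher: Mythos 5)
Your argument is correct, but it takes a genuinely different route from the paper's. The paper's proof is a normalization argument: it starts from an arbitrary net specifying $\L$ (supplied by Theorem~\ref{every-ideal_is-constructible-thm}), observes via Lemma~\ref{proper-or-not-lemma} that properness forces $\|x_\alpha\|$ to be frequently bounded below by some $\e_0>0$, restricts to that subnet and normalizes the vectors; the normalized net specifies a proper ideal containing $\L$, which by maximality must be $\L$ itself. You instead build a unit-vector net from scratch, and the real work is transferred to your selection claim: that for any finite $F\subseteq\L$ and $\e>0$ there exist $N<I$ in $\N$ and a unit vector $x\in N^\perp\H$ with $\|N^\perp Xx\|<\e$ for all $X\in F$. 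Your derivation of this from Theorem~\ref{kmt-thm} with $Y=\e I$ is sound --- the homogeneity reduction $x\mapsto N^\perp x/\|N^\perp x\|$ is exactly what is needed to bound the supremum in~(\ref{kmt-criterion}) by $1$, the degenerate cases are covered by the $0/0$ convention, and a finite supremum yields $\e I=\sum_i A_iX_i\in\L$, contradicting properness. Your exclusion step via $I=Y+BX$ (using $N_\alpha^\perp B=N_\alpha^\perp BN_\alpha^\perp$) is also correct. Note that the two proofs invoke maximality at different points: the paper uses it once, to identify the normalized net's ideal with $\L$; you use it only in the exclusion step, so your construction actually shows that every proper left ideal is contained in the strongly constructible ideal specified by such a unit-vector net --- close in spirit to the paper's later ``strongly constructible hull.'' The paper's version is shorter given that Theorem~\ref{every-ideal_is-constructible-thm} is already established, and its normalization trick is reused almost verbatim in Proposition~\ref{equivalent-strongly-constructible-criteria-prop}; yours is more self-contained and makes the unit-vector witnesses explicit.
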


\begin{proof}
Let $\L$ be a maximal left ideal which we suppose to be specified by the net
$(N_\alpha, x_\alpha)_{\alpha \in A}$. Without loss, assume that each
$x_\alpha = N_\alpha^\perp x_\alpha$. By Lemma~\ref{proper-or-not-lemma},
$x_\alpha \not\rightarrow 0$ and so there is an $\e_0>0$ such that
$\|x_\alpha\|$ is frequently at least $\e_0$. Let
$A' := \{ \alpha\in A : \|x_\alpha\| \ge \e_0 \}$ and
$x'_\alpha = x_\alpha / \|x_\alpha\|$ for $\alpha\in A'$.
Now $A'$ is a directed set and $(N_\alpha, x_\alpha)$ is a net on it.
Again by Lemma~\ref{proper-or-not-lemma},
the net $(N_\alpha, x'_\alpha)_{\alpha \in A'}$ specifies a proper ideal
which, furthermore, contains $\L$ since for $X\in\L$,
\[
  \| N_\alpha^\perp X x'_\alpha \| \le \frac{1}{\e_0} \; \| N_\alpha^\perp X x_\alpha \|
\]
for all $\alpha\in A'$ and the net on the right converges to zero since
$(N_\alpha, x_\alpha)_{\alpha \in A'}$ is a subnet of
$(N_\alpha, x_\alpha)_{\alpha \in A}$.
By maximality, the ideal which $(N_\alpha, x'_\alpha)_{\alpha \in A'}$ specifies
must equal $\L$.
\end{proof}\begin{proposition}\label{intersections-prp}
Arbitrary intersections of strongly constructible ideals are strongly constructible.
\end{proposition}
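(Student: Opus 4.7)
The plan is to mimic the construction used in the proof of Theorem~\ref{every-ideal_is-constructible-thm}, but restricted to unit vectors. Write $\L := \bigcap_{i\in I} \L_i$ and, for each $i$, fix a specifying net $(N_\alpha^{(i)}, x_\alpha^{(i)})_{\alpha\in A_i}$ for $\L_i$ in which every $x_\alpha^{(i)} = N_\alpha^{(i)\perp} x_\alpha^{(i)}$ is a unit vector. I would let $B$ be the set of 4-tuples $(F, \e, N, x)$ with $F \subseteq \L$ finite, $\e > 0$, $N \in \N$, and $x = N^\perp x$ a unit vector satisfying $\|N^\perp Y x\| < \e$ for every $Y \in F$, ordered by declaring $(F, \e, N, x) \le (F', \e', N', x')$ iff $F \subseteq F'$ and $\e \ge \e'$. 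The candidate specifying net is $\alpha = (F, \e, N, x) \mapsto (N, x)$ on $B$.

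The first step is to check that $B$ is nonempty and directed, both of which reduce to the claim that for every finite $F \subseteq \L$ and every $\e > 0$ there exist $N \in \N$ and a unit $x = N^\perp x$ with $\|N^\perp Y x\| < \e$ for all $Y \in F$. Pick any $i \in I$: each $Y \in F$ lies in $\L_i$, so $\|N_\alpha^{(i)\perp} Y x_\alpha^{(i)}\| \to 0$, and finiteness of $F$ lets us realise all these conditions at a common $\alpha \in A_i$.

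Next I would verify that the net specifies $\L$. One direction is immediate: if $X \in \L$ then, by the preceding paragraph, for any $\e > 0$ I can form $\alpha_0 := (\{X\}, \e, N_0, x_0) \in B$, and every $\alpha \ge \alpha_0$ satisfies $\|N_\alpha^\perp X x_\alpha\| < \e$.

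The converse is the main obstacle: if $X \notin \L$, I must exhibit, cofinally in $B$, indices on which $\|N_\alpha^\perp X x_\alpha\|$ is bounded away from $0$. Choose $i_0$ with $X \notin \L_{i_0}$; then there is $\e_0 > 0$ such that $C := \{\gamma \in A_{i_0} : \|N_\gamma^{(i_0)\perp} X x_\gamma^{(i_0)}\| \ge \e_0\}$ is cofinal in $A_{i_0}$. Given any $\alpha_0 = (F, \e', N, x) \in B$, the inclusion $F \subseteq \L \subseteq \L_{i_0}$ and finiteness of $F$ guarantee that the set of $\gamma \in A_{i_0}$ with $\|N_\gamma^{(i_0)\perp} Y x_\gamma^{(i_0)}\| < \e'$ for every $Y \in F$ is eventual, and hence meets $C$. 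Any $\gamma^*$ in the intersection yields $\beta := (F, \e', N_{\gamma^*}^{(i_0)}, x_{\gamma^*}^{(i_0)}) \in B$ dominating $\alpha_0$ with $\|N_\beta^\perp X x_\beta\| \ge \e_0$, as required. A pleasant feature of this step is that, because we already have the specifying net for $\L_{i_0}$ and $F$ sits inside $\L_{i_0}$, no appeal to the Katsoulis--Moore--Trent interpolation theorem (Theorem~\ref{kmt-thm}) is needed; only the cofinality/eventuality play of the net defining $\L_{i_0}$.
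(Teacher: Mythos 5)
Your proof is correct, but it takes a genuinely different route from the paper. The paper proves a standalone combinatorial fact about nets (Lemma~\ref{nets-lemma}): given any family of nets $(x^{(k)}_\alpha)_{\alpha\in A_k}$ in a set $X$, there is a single ``product'' net that is eventually in a subset $E$ if and only if every member of the family is; applying this to the sets $E_\e=\{(N,x):\|(I-N)Xx\|<\e\}$ then gives the proposition almost immediately. You instead re-run the directed-set-of-finite-subsets construction from Theorem~\ref{every-ideal_is-constructible-thm}, indexing by tuples $(F,\e,N,x)$ with $F\subseteq\bigcap_i\L_i$ finite and $x=N^\perp x$ a unit vector, and you correctly observe that the one place where that earlier proof needed the Katsoulis--Moore--Trent theorem -- producing, above a given index, a pair $(N,x)$ that is small on $F$ but bounded away from $0$ on a fixed $X\notin\L$ -- can here be done without interpolation: since $F\subseteq\L_{i_0}$, the specifying net for $\L_{i_0}$ is eventually small on $F$ and frequently large on $X$, and an eventual set meets a cofinal one. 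All the steps (nonemptiness and directedness of your index set, both inclusions) check out, including the fact that your net consists of unit vectors, which is what strong constructibility requires. What each approach buys: the paper's lemma is a reusable, purely set-theoretic device that cleanly separates the net combinatorics from the operator theory (at the cost of an Axiom of Choice argument over section maps); your argument is more self-contained and elementary for this particular proposition, and has the pleasant feature of exhibiting a concrete specifying net built directly from the ideal being specified, closely parallel to the proof that every left ideal is constructible.
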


The proof is a consequence of the following simple result about nets.

\begin{lemma}\label{nets-lemma}
Fix a set $X$ and suppose that we have a family of nets in $X$ indexed by a set $K$,
which we denote by $(x^{(k)}_\alpha)_{a\in A_k}$. Then we can find a
net $(x_\alpha)_{\alpha \in A}$ in $X$ with the property that for any
$ E\subseteq X$, $(x_\alpha)_{\alpha \in A}$ is eventually in $E$ if and only if
for each $k \in K$, $(x^{(k)}_\alpha)_{\alpha \in A_k}$ is eventually in $E$.
\end{lemma}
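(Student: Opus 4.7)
The plan is to construct one directed set $A$ that runs all the nets $(x^{(k)}_\alpha)_{\alpha \in A_k}$ in parallel, by taking the product of the index sets and keeping track of which net we are sampling at any stage. Specifically, set $P := \prod_{k \in K} A_k$, the set of all choice functions $f$ with $f(j) \in A_j$ for each $j \in K$, and let $A := P \times K$ ordered by $(f, k) \le (f', k')$ iff $f(j) \le f'(j)$ in $A_j$ for every $j \in K$ (so the second coordinate is irrelevant to the order). Directedness of $A$ reduces to directedness of $P$ under coordinatewise order, which follows because for any $f_1, f_2 \in P$ one can choose, for each $j$, an upper bound $g(j) \in A_j$ of $f_1(j)$ and $f_2(j)$, using the Axiom of Choice. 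Define the combined net by $x_{(f, k)} := x^{(k)}_{f(k)}$.

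For the $(\Leftarrow)$ direction, suppose each $(x^{(k)}_\alpha)_{\alpha \in A_k}$ is eventually in $E$, so for every $k$ there exists $\beta_k \in A_k$ with $x^{(k)}_\alpha \in E$ for all $\alpha \ge \beta_k$. Use the Axiom of Choice to define $f_0 \in P$ by $f_0(k) := \beta_k$ and pick any $k_0 \in K$. Then for any $(f, k) \ge (f_0, k_0)$ we have $f(k) \ge f_0(k) = \beta_k$, so $x_{(f,k)} = x^{(k)}_{f(k)} \in E$, showing the combined net is eventually in $E$.

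For the $(\Rightarrow)$ direction, suppose $(x_\alpha)_{\alpha \in A}$ is eventually in $E$, say beyond $(f_0, k_0)$. Fix $k \in K$ and any $\alpha \ge f_0(k)$ in $A_k$. Define $f \in P$ by $f(j) := f_0(j)$ for $j \ne k$ and $f(k) := \alpha$; then $f \ge f_0$ coordinatewise, so $(f, k) \ge (f_0, k_0)$ in $A$, yielding $x^{(k)}_\alpha = x_{(f, k)} \in E$. Thus the $k$-th net is eventually in $E$, as required.

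There is no real mathematical obstacle: the content is simply the standard ``product of directed sets'' construction, adapted so that at each index we sample one of the nets. The only delicate points are the (repeated) invocations of the Axiom of Choice, once to verify that $P$ is directed and once to assemble the tail indices $\beta_k$ into a single function $f_0$; otherwise the verification is bookkeeping with the coordinatewise order.
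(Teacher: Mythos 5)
Your construction is exactly the paper's: the product $P=\prod_{k\in K}A_k$ of choice functions is precisely the set of ``section maps'' the paper uses, the order and the sampling rule $x_{(f,k)}:=x^{(k)}_{f(k)}$ coincide, and both directions of the equivalence are argued the same way (modifying $f_0$ at a single coordinate for one direction, assembling tail indices by Choice for the other). The proof is correct and essentially identical to the paper's.
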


\begin{proof}
Define $A$ to be set the set of pairs $(\sigma, k)$ where $\sigma$ is a section map
on the fibre bundle of $A_k$ over $K$ (i.e., for each $k \in K$, $\sigma(k) \in A_k$),
and $k$ is an arbitrary member of $K$. Put a relation on $A$ by declaring
$(\sigma, k) \le (\tau, l)$ if $\sigma(i) \le_i \tau(i)$ for all $i\in K$ (the relation $\le_i$
is the directed relation defined on $A_i$). This is a symmetric and transitive relation.
Moreover, if $(\sigma, k)$ and $(\tau, l)$ are in $A$ then for each $i\in K$ we can
find an element of $A_i$ which dominates both $\sigma(i)$ and $\tau(i)$. By the Axiom of
Choice there is therefore a section map $\rho$ such that $\rho(i)$ dominates both $\sigma(i)$ and
$\tau(i)$ for all $i \in K$. Taking an arbitrary $i \in K$, then $(\rho, i)$ dominates both
$(\sigma, k)$ and $(\tau, l)$ in $A$. Thus $A$ is a directed set, and we define the net
$(x_{(\sigma, k)})_{(\sigma, k) \in A}$ by $x_{(\sigma, k)} :=x^{(k)}_{\sigma(k)}$.

Now, on one hand, suppose that $(x_{(\sigma, k)})$ is eventually in $E \subseteq X$.
Thus there is a $(\sigma_0, k_0) \in A$ such that
$x_{(\sigma, k)} \in E$ for all $(\sigma, k) \ge (\sigma_0, k_0)$.
Fix $k\in K$ and consider $\alpha_0 := \sigma_0(k) \in A_k$. If $\alpha \ge_k \alpha_0$
then define
$\sigma(i) := \sigma_0(i)$ for all $i \not= k$ and $\sigma(k) := \alpha$.
Then $(\sigma, k) \ge (\sigma_0, k_0)$ and so $x^{(k)}_\alpha = x_{(\sigma, k)} \in E$.
This shows that for each $k \in K$, $(x^{(k)}_\alpha)_{\alpha \in A_k}$ is eventually in
$E$.

Conversely, let $E \subseteq X$ and suppose that for every $k \in K$,
$(x^{(k)}_\alpha)_{\alpha \in A_k}$ is eventually in $E$. That is to say,
for each $k \in K$, we can can find an $\alpha_0 \in A_k$ such that
$x^{(k)}_\alpha \in E$ for all $\alpha \ge_k \alpha_0$ in $A_k$. Again by the
Axiom of Choice we pick one such $\alpha_0$ for each $k\in K$ and obtain a section
$\sigma_0$ such that for each $k \in K$ and
$\alpha \ge_k \sigma_0(k)$ in $A_k$, we have $x^{(k)}_\alpha \in E$. Pick an arbitrary
$k_0 \in K$ and then suppose $(\sigma, k) \ge (\sigma_0, k_0)$. This means that,
in particular, $\sigma(k) \ge_k \sigma_0(k)$, so that
$x_{(\sigma, k)} = x^{(k)}_{\sigma(k)} \in E$. We conclude that the net
$(x_{(\sigma, k)})_{(\sigma, k) \in A}$ is eventually in $E$.
\end{proof}

The proof of Proposition~\ref{intersections-prp} now follows straightforwardly.

\begin{proof}[Proof (of Proposition~\ref{intersections-prp})]
Let $\L_k$ ($k \in K$) be a collection of strongly constructible left ideals.
Writing $\H_1$ for the set of unit vectors in $\H$, for each
$k \in K$ there are directed sets $A_k$ and nets
$(N^{(k)}_\alpha, x^{(k)}_\alpha) \in \N \times \H_1$ for $\alpha\in A_k$
such that an $X\in\tn$ belongs to $\L_k$ if and
only if $\lim_{\alpha\in A_k}\|(I - N^{(k)}_\alpha) X x^{(k)}_\alpha\| = 0$.

By Lemma~\ref{nets-lemma}, find a new net $(N_\alpha, x_\alpha)_{\alpha \in A}$
in $\N \times \H_1$ which is eventually in a subset of
$\N \times \H_1$ if and only if
each of the $(N^{(k)}_\alpha, x^{(k)}_\alpha)_{\alpha\in A_k}$ are eventually in
that set. Fix $X \in \tn$ and let $\e > 0$ be given. Let
\[
  E_\e : = \{ (N, x) \in \N \times \H_1 : \| (I - N) X x \| < \e \}
\]
Clearly $X \in \bigcap_{k \in K} \L_k$ iff
for every $k\in K$ and every $\e > 0$,
$(N^{(k)}_\alpha, x^{(k)}_\alpha)_{\alpha\in A_k}$ is eventually in $E_\e$.
This happens iff for every $\e > 0$, $(N_\alpha, x_\alpha)_{\alpha \in A}$
is eventually in $E_\e$, which in turn happens iff
$\lim_{\alpha \in A} \|(I - N_\alpha) X x_\alpha\| = 0$. Thus
$\bigcap_{k \in K} \L_k$ is strongly constructible.
\end{proof}

\begin{corollary}
Every proper left ideal $\L$ of $\tn$ is contained in a smallest strongly
constructible left ideal, which we shall call the
\emph{strongly constructible hull} of $\L$.
\end{corollary}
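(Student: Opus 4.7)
The plan is to define the strongly constructible hull of $\L$ to be the intersection of all strongly constructible left ideals of $\tn$ which contain $\L$, and then verify three things: that this family is non-empty, that its intersection is itself strongly constructible, and that by construction this intersection is the smallest strongly constructible left ideal containing $\L$.

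The main content lies in showing the family is non-empty. Since $\L$ is a \emph{proper} left ideal of the unital algebra $\tn$, a standard Zorn's lemma argument produces a maximal left ideal $\M_0$ of $\tn$ with $\L\subseteq\M_0$. By Proposition~\ref{maximal-strongly-constructible-prop} this $\M_0$ is strongly constructible, so the family
\[
  \F := \{\,\J : \J \text{ a strongly constructible left ideal of $\tn$ with } \L\subseteq\J\,\}
\]
contains $\M_0$ and is therefore non-empty. Non-emptiness is essential rather than cosmetic, because $\tn$ itself is \emph{not} strongly constructible: by Lemma~\ref{proper-or-not-lemma}, any specifying net of unit vectors $x_\alpha = N_\alpha^\perp x_\alpha$ would have $\|N_\alpha^\perp x_\alpha\|=1\not\to 0$, so cannot describe $\tn$. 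This is precisely where the hypothesis that $\L$ is proper enters.

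With $\F$ in hand, Proposition~\ref{intersections-prp} immediately gives that $\bigcap_{\J\in\F}\J$ is strongly constructible. By construction this intersection contains $\L$ and is contained in every member of $\F$, so it is the smallest strongly constructible left ideal containing $\L$. The only step that required any real argument was the Zorn step used to exhibit a single strongly constructible ideal above $\L$; the rest is formal once Propositions~\ref{maximal-strongly-constructible-prop} and~\ref{intersections-prp} are available.
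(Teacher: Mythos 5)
Your proof is correct and follows essentially the route the paper intends: the corollary is stated immediately after Proposition~\ref{intersections-prp}, and the implicit argument is exactly to intersect all strongly constructible left ideals containing $\L$, with non-emptiness of that family supplied by a maximal left ideal above $\L$ (strongly constructible by Proposition~\ref{maximal-strongly-constructible-prop}). Your explicit observation that properness is needed because $\tn$ itself is not strongly constructible is a worthwhile detail the paper leaves unstated.
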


\begin{corollary}\label{primitives-are-strongly-constructible-cor}
The primitive ideals of $\tn$ are strongly constructible.
\end{corollary}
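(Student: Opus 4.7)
The plan is to realize each primitive ideal as an intersection of maximal left ideals and then combine Proposition~\ref{maximal-strongly-constructible-prop} with Proposition~\ref{intersections-prp}. By Remark~\ref{prim-ideals-remark}, every primitive ideal $\P$ of $\tn$ is the annihilator of some simple left $\tn$-module $M$, so that
\[
  \P = \{X \in \tn : XM = 0\} = \bigcap_{m \in M \setminus \{0\}} \mathrm{ann}(m),
\]
where $\mathrm{ann}(m) := \{X \in \tn : Xm = 0\}$ is the left annihilator of the vector $m$.

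The one algebraic fact to record is that, for each nonzero $m \in M$, the left ideal $\mathrm{ann}(m)$ is in fact \emph{maximal}. This is standard: since $\tn$ is unital and $M$ is simple, $\tn m = M$, so the $\tn$-linear map $X \mapsto Xm$ is a surjection $\tn \to M$ with kernel $\mathrm{ann}(m)$. Hence $\tn/\mathrm{ann}(m) \cong M$ is simple, and the correspondence between left ideals of $\tn$ containing $\mathrm{ann}(m)$ and submodules of $\tn/\mathrm{ann}(m)$ forces $\mathrm{ann}(m)$ to be maximal.

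With $\P$ now exhibited as an intersection of maximal left ideals, Proposition~\ref{maximal-strongly-constructible-prop} tells us each $\mathrm{ann}(m)$ is strongly constructible, and Proposition~\ref{intersections-prp} then delivers that $\P$ is strongly constructible as well. There is no real obstacle at this stage: all the analytic content has already been absorbed into the two preceding propositions, and this corollary is a matter of assembling well-known facts about primitive ideals in unital algebras.
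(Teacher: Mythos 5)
Your proof is correct and follows essentially the same route as the paper: the paper simply cites the standard fact that every primitive ideal is the intersection of the maximal left ideals containing it and then invokes Propositions~\ref{maximal-strongly-constructible-prop} and~\ref{intersections-prp}, exactly as you do. The only difference is that you prove that ring-theoretic fact in-line (via the annihilators $\mathrm{ann}(m)$ of nonzero elements of a simple module), which is a correct and self-contained substitute for the citation.
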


\begin{proof}
Every primitive ideal is the intersection of the maximal left ideals which contain it
\cite[\sectionmark{24}, Proposition 12 (iv)]{BonsallDuncan:CoNoAl}.
The result follows by By Propositiona~\ref{maximal-strongly-constructible-prop}
and~\ref{intersections-prp}.
\end{proof}

\begin{example}
In particular, the maximal two-sided ideals of $\tn$, being primitive, are strongly
constructible. Recall that the \emph{strong radical} of a unital algebra is the intersection of
all its maximal two-sided ideals. In \cite[Theorem 3.2]{Orr:MaIdNeAl} we saw that if $\tn$
is a continuous nest algebra then any norm-closed, two-sided ideal of $\tn$ which contains
the strong radical is the intersection of the maximal two-sided ideals which contain it.
Thus by Propositions~\ref{intersections-prp} and~\ref{primitives-are-strongly-constructible-cor},
all such ideals are strongly constructible.
\end{example}

\begin{corollary}
All norm-closed, two-sided ideals of a continuous nest algebra which contain the
strong radical are strongly constructible.
\end{corollary}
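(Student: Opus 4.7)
The proof is essentially a direct assembly of results already established (and indeed the preceding example sketches the argument), so the plan is short. The strategy is to write a norm-closed two-sided ideal $\J$ containing the strong radical as an intersection of maximal two-sided ideals and then invoke the two stability results for strongly constructible ideals.

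First, I would fix a norm-closed two-sided ideal $\J\subseteq\tn$ with $\J$ containing the strong radical, where $\N$ is a continuous nest. Then I would invoke \cite[Theorem 3.2]{Orr:MaIdNeAl} to write
\[
  \J \;=\; \bigcap_{\M\supseteq\J} \M,
\]
where $\M$ ranges over the maximal two-sided ideals of $\tn$ containing $\J$. This is the one place where the hypotheses (continuous nest, contains the strong radical) are used, and it is the only nontrivial input.

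Next, I would observe that every maximal two-sided ideal of a unital Banach algebra is primitive (as recalled in Remark \ref{prim-ideals-remark}). Hence, by Corollary \ref{primitives-are-strongly-constructible-cor}, each $\M$ appearing in the intersection is strongly constructible. Finally, applying Proposition \ref{intersections-prp} to the family $\{\M : \M\supseteq\J\}$ yields that $\J$ itself is strongly constructible.

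There is no real obstacle here: the work has already been done in Theorem~3.2 of \cite{Orr:MaIdNeAl} and in the net-theoretic Lemma \ref{nets-lemma} underlying Proposition \ref{intersections-prp}. The only thing to check is that the hypotheses of \cite[Theorem 3.2]{Orr:MaIdNeAl} apply, namely that $\N$ is continuous and that $\J$ contains the strong radical — both of which are assumed. So the corollary is a two-line consequence of the preceding material, and the plan above is essentially the entire proof.
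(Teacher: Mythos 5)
Your proposal is correct and follows the same route as the paper: the preceding example in the text is precisely this argument, writing the ideal as the intersection of the maximal two-sided ideals containing it via \cite[Theorem 3.2]{Orr:MaIdNeAl}, noting those are primitive and hence strongly constructible by Corollary~\ref{primitives-are-strongly-constructible-cor}, and closing with Proposition~\ref{intersections-prp}. No gaps.
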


\begin{question}
Is every norm-closed left ideal of a nest algebra strongly constructible?
\end{question}

Strongly constructible ideals are also characterized by two ostensibly
weaker conditions:

\begin{proposition}\label{equivalent-strongly-constructible-criteria-prop}
Let $\L$ be a proper left ideal of $\tn$. The following are equivalent:
\begin{enumerate}
  \item $\L$ is strongly constructible.
  \item $\L$ can be specified by a net $(N_\alpha, x_\alpha)_{\alpha\in A}$
      where $\|x_\alpha\| \le 1$ for all $\alpha\in A$.
  \item $\L$ can be specified by a net $(N_\alpha, x_\alpha)_{\alpha\in A}$
      where $\|x_\alpha\|$ is bounded.
\end{enumerate}
\end{proposition}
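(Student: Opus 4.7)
The plan is to dispatch $(1)\Rightarrow(2)\Rightarrow(3)$ tautologically and $(3)\Rightarrow(2)$ by rescaling: if $\|x_\alpha\|\le M$ then the net $(N_\alpha, M^{-1}x_\alpha)$ has vectors in the unit ball and specifies the same ideal, because multiplying every $x_\alpha$ by a fixed nonzero scalar leaves the limit-zero criterion unchanged. All the substance is therefore in proving $(2)\Rightarrow(1)$.

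For that implication, I would begin with a specifying net $(N_\alpha, x_\alpha)_{\alpha\in A}$ where $x_\alpha=N_\alpha^\perp x_\alpha$ and $\|x_\alpha\|\le 1$. Properness of $\L$ combined with Lemma~\ref{proper-or-not-lemma} produces some $\e^*>0$ such that $\|x_\alpha\|\ge\e^*$ holds frequently in $A$. For each $\lambda\in(0,\e^*)$ the subset $A_\lambda:=\{\alpha\in A:\|x_\alpha\|\ge\lambda\}$ is then cofinal in $A$, and as a cofinal subset of a directed set it is itself directed in the inherited order. On $A_\lambda$ the vectors $y_\alpha:=x_\alpha/\|x_\alpha\|$ are unit vectors in $N_\alpha^\perp\H$, and the net $(N_\alpha,y_\alpha)_{\alpha\in A_\lambda}$ manifestly specifies a strongly constructible left ideal $\L_\lambda$.

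The central step is to establish the identity $\L=\bigcap_{\lambda\in(0,\e^*)}\L_\lambda$, after which Proposition~\ref{intersections-prp} delivers strong constructibility of $\L$ at once. The inclusion $\L\subseteq\L_\lambda$ is clear from the pointwise bound $\|N_\alpha^\perp Xy_\alpha\|\le\lambda^{-1}\|N_\alpha^\perp Xx_\alpha\|$ on $A_\lambda$. For the reverse, given $X\in\bigcap_\lambda\L_\lambda$ and $\e>0$, I would pick $\lambda\in(0,\e^*)$ small enough that $\|X\|\lambda<\e/2$ and split $A$ at $A_\lambda$: on $A\setminus A_\lambda$ the uniform estimate $\|N_\alpha^\perp Xx_\alpha\|\le\|X\|\|x_\alpha\|<\e/2$ holds, while on $A_\lambda$ the hypothesis $X\in\L_\lambda$ supplies some $\alpha_0\in A_\lambda$ past which $\|N_\alpha^\perp Xy_\alpha\|<\e/2$, so that $\|N_\alpha^\perp Xx_\alpha\|=\|x_\alpha\|\|N_\alpha^\perp Xy_\alpha\|<\e/2$ by $\|x_\alpha\|\le 1$. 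Combining the two estimates yields $\|N_\alpha^\perp Xx_\alpha\|<\e$ for every $\alpha\in A$ with $\alpha\ge\alpha_0$, hence $X\in\L$.

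The principal obstacle is recognizing that a single normalized subnet cannot work: the inclusion $\L\subseteq\L_\lambda$ demands that $\lambda$ stay positive (so the factor $\lambda^{-1}$ is finite), while the reverse inclusion demands that $\lambda$ become arbitrarily small (so that $\|X\|\lambda$ defeats $\e$ uniformly on the complement). Intersecting over all admissible $\lambda$ reconciles these competing needs, and Proposition~\ref{intersections-prp} consolidates the whole family $\{\L_\lambda\}$ into one unit-vector specification of $\L$.
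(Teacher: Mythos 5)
Your proposal is correct and follows essentially the same route as the paper: truncate the index set to the cofinal subsets where $\|x_\alpha\|$ is bounded below, normalize to get a family of strongly constructible ideals, identify $\L$ with their intersection, and invoke Proposition~\ref{intersections-prp}. The only differences are cosmetic (a continuum of cutoffs $\lambda$ instead of the paper's countable family $\e_0/k$, a direct $\e/2$-splitting where the paper argues by contrapositive, and routing $(3)\Rightarrow(2)\Rightarrow(1)$ instead of $(3)\Rightarrow(1)$).
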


\begin{proof}
Clearly $(1) \Rightarrow (2) \Rightarrow (3)$ and so it remains to prove $(3) \Rightarrow (1)$.
Suppose $(N_\alpha, x_\alpha)_{\alpha\in A}$ specifies $\L$ and $\|x_\alpha\|$ is bounded.
Since $\L$ is proper, by Lemma~\ref{proper-or-not-lemma} $x_\alpha\not\rightarrow 0$
and so there is an $\e_0$ such that $\|x_\alpha\|$ is frequently at least $\e_0$. For each
$k\in\NN$ set
\[
  A_k := \{ \alpha \in A : \|x_\alpha\| \ge \e_0 / k \}
\]
Each $A_k$ is a directed set (with the order relation inherited from $A$) and the
restricted net $(N_\alpha, x_\alpha)_{\alpha\in A_k}$ defines a left ideal $\L_k$.
Since the $x_\alpha$ are bounded away from zero on $A_k$, we can normalize and see
each $\L_k$ is strongly constructible. It remains to check that
$\L = \bigcap_{k\in\N} \L_k$ and then the result will follow by
Proposition~\ref{intersections-prp}.

Clearly since each $A_k \subseteq A$, also $\L \subseteq \L_k$ and so
$\L \subseteq \bigcap_{k\in\N} \L_k$. Suppose $X\not\in\L$. Then
$(I-N_\alpha)Xx_\alpha \not\rightarrow 0$ and so there is an $\e_1>0$ such that
$\|(I-N_\alpha)Xx_\alpha\| \ge \e_1$ frequently. Choose $k > \e_0\|X\| / \e_1$
so that then whenever $\|(I-N_\alpha)Xx_\alpha\| \ge \e_1$ then
\[
  \|X\|\|x_\alpha\| \ge \|(I-N_\alpha)Xx_\alpha\| \ge \e_1 > \|X\| \frac{\e_0}{k}
\]
and thus $\alpha\in A_k$. It follows that $\|(I-N_\alpha)Xx_\alpha\| \ge \e_1$
frequently on $A_k$, and so $X\not\in\L_k$.
\end{proof}

\begin{proposition}\label{max-left-contains-projections-prp}
Let $\L$ be a maximal left ideal in $\tn$ and let $P_n$ be a sequence of pairwise
orthogonal projections in $\L$. There is a subsequence $P_{n_k}$ such that the
projection $\sum_{n=1}^\infty P_{n_k}$ belongs to $\L$.
\end{proposition}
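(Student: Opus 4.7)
The plan is to exploit the strong constructibility of $\L$ guaranteed by Proposition~\ref{maximal-strongly-constructible-prop}: fix a net $(N_\alpha,x_\alpha)_{\alpha\in A}$ that specifies $\L$, with $\|x_\alpha\|=1$ and $x_\alpha=N_\alpha^\perp x_\alpha$ for every $\alpha$. The first, easy, observation is that any projection $P\in\tn$ automatically lies in $\dn$: invariance of $\ran N$ under $P$ gives $NP=NPN$, and taking adjoints yields $PN=NPN=NP$. Consequently each $P_n$ commutes with every $N_\alpha$, which collapses the net criterion to the purely diagonal identity
\[
  \|(I-N_\alpha)P_n x_\alpha\|=\|P_n(I-N_\alpha)x_\alpha\|=\|P_n x_\alpha\|,
\]
so $P_n\in\L$ is equivalent to $\|P_n x_\alpha\|\to 0$. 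By the same commutation, for any $S\subseteq\NN$ the projection $R_S:=\sum_{n\in S}P_n$ also lies in $\dn$, and orthogonality of the summands gives
\[
  \|(I-N_\alpha)R_S x_\alpha\|^2=\|R_S x_\alpha\|^2=\sum_{n\in S}\|P_n x_\alpha\|^2.
\]
Thus the proposition reduces to exhibiting an infinite $S$ with $\sum_{n\in S}\|P_n x_\alpha\|^2\to 0$.

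Writing $a_n(\alpha):=\|P_n x_\alpha\|^2$, we have $a_n\to 0$ pointwise in $\alpha$, and Bessel's inequality yields $\sum_n a_n(\alpha)\le\|x_\alpha\|^2=1$ for every $\alpha$. The natural attempt is a diagonal recursion: choose $n_1<n_2<\cdots$ in $\NN$ and a chain $\alpha_1\le\alpha_2\le\cdots$ in $A$ so that $a_{n_k}(\alpha)<2^{-k}$ whenever $\alpha\ge\alpha_k$. This controls every finite head $\sum_{k\le K}a_{n_k}(\alpha)$ on the cofinal set $\{\alpha\ge\alpha_K\}$, but leaves the tail $\sum_{k>K}a_{n_k}(\alpha)$ with only the useless Bessel bound $1$; this is the main obstacle, and Bessel alone is clearly insufficient, since $A$ need not have countable cofinality and so one cannot simply pass to the limit of the $\alpha_k$.

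To close that gap I would invoke maximality in its contrapositive form. If no admissible subsequence existed, then $R_S\notin\L$ for every infinite $S\subseteq\NN$, so by maximality $I=A_S P_S+X_S$ for some $A_S\in\tn$ and $X_S\in\L$. Evaluating at $x_\alpha$, applying $N_\alpha^\perp$, and using $\|N_\alpha^\perp X_S x_\alpha\|\to 0$ forces $\|N_\alpha^\perp A_S P_S x_\alpha\|\to 1$, whence $\|P_S x_\alpha\|\ge c_S:=1/\|A_S\|>0$ eventually, for every infinite $S$. Partition $\NN$ into infinitely many disjoint infinite subsets $S_1,S_2,\ldots$; the global Bessel cap $\sum_i\|P_{S_i}x_\alpha\|^2\le 1$ then collides with the eventual family of lower bounds $\|P_{S_i}x_\alpha\|\ge c_{S_i}$, provided the partition is chosen so that the constants $c_{S_i}$ cannot collectively decay fast enough to be compatible with a total of $1$. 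Engineering that partition --- most plausibly by a recursive bisection driven by the given $P_n$ and controlled by the $\|A_S\|$ --- is where I expect the essential combinatorial content of the argument to sit, and where the genuine use of the maximality of $\L$ (as opposed to mere properness) is made.
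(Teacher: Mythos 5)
Your reduction is sound (projections in $\tn$ lie in $\dn$, so membership of $P(S):=\sum_{n\in S}P_n$ in $\L$ becomes $\sum_{n\in S}\|P_nx_\alpha\|^2\to 0$), your diagnosis of why the naive diagonal recursion fails is exactly right, and your use of maximality to extract an \emph{eventual} lower bound $\|P(S)x_\alpha\|\ge c_S>0$ for every infinite $S$ with $P(S)\notin\L$ is correct. But the final step is a genuine gap, and the specific plan you sketch does not close it: if you partition $\NN$ into infinitely many disjoint infinite sets $S_1,S_2,\ldots$, then for every finite $M$ there is an $\alpha$ beyond which all of the first $M$ lower bounds hold simultaneously, so Bessel only yields $\sum_i c_{S_i}^2\le 1$ --- which is perfectly consistent with every $c_{S_i}$ being positive. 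No choice of partition forces a collision from this information alone, because the constants are allowed to depend on $S$ and to decay. The paper closes the argument with a different device: it passes to a \emph{universal subnet} (Kelley's theorem), which by maximality still specifies $\L$, and for which every real net $\|P(S)x_\alpha\|$ is eventually decided on either side of any threshold. This licenses a recursive bisection $S_0\supseteq S_1\supseteq\cdots$ with $\|P(S_k)x_\alpha\|\le 2^{-k/2}$ eventually, followed by a diagonal pseudo-intersection $S$ with $S\setminus S_k$ finite for all $k$; the finite leftover is handled because each individual $P_n\in\L$.

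It is worth noting that your own ingredients can be made to work without the universal subnet, but by a bisection rather than a partition into infinitely many pieces: set $\nu(S):=\liminf_\alpha\|P(S)x_\alpha\|^2$. Then $\nu$ is superadditive on disjoint unions, $\nu(\NN)\le 1$, and $\nu(F)=0$ for finite $F$; so splitting each $S_k$ into two infinite halves, one half has $\nu\le\nu(S_k)/2$, and the diagonal set $S$ (the $k$th element of $S_k$) satisfies $\nu(S)\le\nu(S_k)+\nu(S\setminus S_k)\le 2^{-k}$ for every $k$, hence $\nu(S)=0$ --- contradicting your eventual lower bound $\nu(S)\ge c_S^2>0$. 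Either route supplies the missing combinatorial engine; as written, your proposal stops just short of it.
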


\begin{proof}
By Proposition~\ref{maximal-strongly-constructible-prop}, $\L$ is strongly constructible,
say by a net $(N_\alpha, x_\alpha)$ where each $x_\alpha$ is a unit vector in the range of
$N_\alpha$. By Kelley's Theorem, this net has a universal subnet, which specifies a proper
ideal containing $\L$, hence in fact specifies $\L$ itself. Thus we may assume
$(N_\alpha, x_\alpha)$ is universal.

The proof now proceeds by means of a fairly routine diagonal argument.
For any $S\subseteq\NN$ write $P(S) : = \sum_{n\in S} P_n$.
Take $S_0:=\NN$ and split $S_0$ into two infinite sets, $S_0'$ and $S_0''$.
If $\|P(S_0')x_\alpha\|$ and $\|P(S_0'')x_\alpha\|$ are each eventually
greater than $1/\sqrt{2}$ then
$\|P(S_0)x_\alpha\|^2 = \|P(S_0')x_\alpha\|^2 + \|P(S_0'')x_\alpha\|^2$
is eventually greater than $1$, which is impossible. Since $(N_\alpha, x_\alpha)$
is universal that means at least one of $\|P(S_0')x_\alpha\|$,
$\|P(S_0'')x_\alpha\|$ is eventually no greater than $1/\sqrt{2}$; without loss
suppose that $\|P(S_0')x_\alpha\| \le 1/\sqrt{2}$ eventually, and set $S_1 := S_0'$.

Now decompose $S_1 = S_1' \cup S_1''$ in the same way as the union of infinite subsets
and, as before, we conclude that at least one of $\|P(S_1')x_\alpha\|$, $\|P(S_1'')x_\alpha\|$
is eventually no greater than $(1/\sqrt{2})^2$. Take $S_2$ to be one of $S'_1$, $S''_1$
for which this holds. Proceeding in this way we obtain a sequence
$S_0\supseteq S_1\supseteq S_2\supseteq \ldots$
of infinite subsets of $\NN$ such that for each $k$, eventually
$\|P(S_k) x_\alpha\| \le (1/\sqrt{2})^k$. Now take $n_k$ to be the $k$th element
of $S_k$ in order, which is a strictly increasing sequence,
and let $S := \{n_k\}$. Thus $S \setminus S_k$ is finite for all $k$.

Finally, write $P := P(S)$ and, given $\e>0$, take $k$ such that $(1/\sqrt{2})^k < \e$.
For all sufficiently large $\alpha$
\begin{eqnarray*}
  \|Px_\alpha\| &=& \|(P P(S_k) + P P(S_k)^\perp)x_\alpha\|                  \\
                &\le& \|P(S_k)x_\alpha\| + \|P(S\setminus S_k)x_\alpha\|     \\
                &\le& \e + \sum_{n\in S\setminus S_k} \| P_n x_\alpha \|
\end{eqnarray*}
But the sum in the last line is finite and so is eventually less than $\e$.
We can conclude $\|N_\alpha^\perp P x_\alpha\| = \|Px_\alpha\| \rightarrow 0$,
so that $P\in\L$.
\end{proof}

\begin{corollary}\label{max-right-contains-projections-cor}
Let $\J$ be a maximal right ideal in $\tn$ and let $P_n$ be a sequence of pairwise
orthogonal projections in $\R$. There is a subsequence $P_{k_n}$ such that the
projection $\sum_{k=1}^\infty P_{k_n}$ belongs to $\R$.
\end{corollary}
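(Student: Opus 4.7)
The plan is to reduce this corollary to Proposition~\ref{max-left-contains-projections-prp} by passing to adjoints, in essentially the same manner as Corollary~\ref{diagonal-are-right-primitive-cor}. The conjugate-linear anti-isomorphism $X \mapsto X^*$ carries $\tn$ onto $\T(\N^\perp)$, and since it reverses multiplication it takes right ideals of $\tn$ to left ideals of $\T(\N^\perp)$, and maximal right ideals to maximal left ideals. (Note that the statement as written refers to ``$\R$'', which I read as the right ideal $\J$; the adjoint $\J^*$ is then a maximal left ideal of $\T(\N^\perp)$.)

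The key observation is that the adjoint map preserves the relevant structural features of the sequence $(P_n)$. Each $P_n$ is self-adjoint, so $P_n^* = P_n$ is still a projection, and pairwise orthogonality $P_mP_n = 0$ for $m\ne n$ is clearly preserved under taking adjoints. Thus $(P_n)$ is a sequence of pairwise orthogonal projections in the maximal left ideal $\J^*$ of the nest algebra $\T(\N^\perp)$.

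Applying Proposition~\ref{max-left-contains-projections-prp} to $\J^*$ and $(P_n)$ inside $\T(\N^\perp)$, I obtain a subsequence $(P_{n_k})$ such that the projection $P := \sum_{k=1}^\infty P_{n_k}$ lies in $\J^*$. Taking adjoints once more, $P = P^* \in \J$, which is precisely the desired conclusion for the original right ideal.

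The only step requiring any care is confirming that the strong-operator sum $\sum_{k=1}^\infty P_{n_k}$ is unambiguous under the adjoint involution; but the adjoint is strong-operator continuous on bounded sets and each partial sum is self-adjoint, so this is immediate. In short, no genuine obstacle arises; this corollary is the right-sided analogue obtained by the standard duality, and the entire content of the proof is that Proposition~\ref{max-left-contains-projections-prp} applies equally well to $\N^\perp$.
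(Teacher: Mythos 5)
Your proposal is correct and is exactly the paper's argument: the paper's entire proof reads ``The result follows on taking adjoints and working in $\T(\N^\perp)$,'' and you have simply spelled out the routine verifications (that $X\mapsto X^*$ sends maximal right ideals of $\tn$ to maximal left ideals of $\T(\N^\perp)$ and preserves pairwise orthogonal projections and their sums). You also correctly read the statement's ``$\R$'' as a typo for the right ideal $\J$.
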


\begin{proof}
The result follows on taking adjoints and working in $\T(\N^\perp)$.
\end{proof}

\section{Atomic nest algebras}\label{atomic-nest-algebras-sect}

In this section we shall focus on atomic nest algebras and relate the character of
primitive ideals to the family of diagonal operators they contain. Observe
that if $\P$ is a primitive ideal of $\tn$ then $\P\cap\dn$ is a norm-closed
two-sided ideal of the C$^*$-star algebra $\dn$ and is therefore a $*$-ideal.
In many interesting cases the nest is multiplicity-free so that $\dn$ is an
abelian C$^*$-algebra.

\begin{proposition}\label{prim-max-prp}
Let $\N$ be an atomic nest and $\J$ a two-sided ideal in $\tn$.
Then $\J$ is a maximal two-sided ideal if and only if $\J\cap\dn$
is a maximal two-sided ideal of $\dn$.
\end{proposition}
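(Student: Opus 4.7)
The plan is to exploit the two structural facts about atomic nests that the paper has already highlighted in the introduction: Larson's decomposition $\tn = \dn \oplus \rinfty$ (valid precisely when $\N$ has no continuous part), and the equality of the strong radical of $\tn$ (the intersection of all maximal two-sided ideals) with $\rinfty$ in the atomic case. Together these allow a translation between ideal theory in $\tn$ and ideal theory in $\dn$, where the latter is just a C$^*$-direct sum $\bigoplus_i B(E_i\H)$ over the atoms $E_i$ of $\N$.

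For the forward direction, I would argue as follows. Suppose $\J$ is a maximal two-sided ideal of $\tn$. Every maximal two-sided ideal contains the strong radical, and since $\N$ is atomic the strong radical is $\rinfty$, so $\J \supseteq \rinfty$. Then Larson's decomposition gives $\J = (\J\cap\dn) \oplus \rinfty$, and the diagonal expectation $E : \tn \to \dn$ (which is a well-defined algebra homomorphism because $\rinfty$ is a two-sided ideal complementary to $\dn$) descends to a ring isomorphism $\tn/\J \cong \dn/(\J\cap\dn)$. Simplicity of $\tn/\J$ is therefore equivalent to simplicity of $\dn/(\J\cap\dn)$, so $\J\cap\dn$ is maximal in $\dn$.

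For the backward direction, suppose $\J\cap\dn$ is maximal in $\dn$. First, $I\in\dn\setminus(\J\cap\dn)$ forces $I\notin\J$, so $\J$ is proper. Now consider the candidate maximal ideal $\J^{*} := (\J\cap\dn)\oplus\rinfty$; by a symmetric application of the forward argument, $\J^{*}$ is indeed a maximal two-sided ideal of $\tn$. To conclude $\J$ itself is maximal, it suffices to show $\J = \J^{*}$, and since $\J\cap\dn\subseteq\J$ trivially, this reduces to the inclusion $\rinfty\subseteq\J$. Equivalently, one can suppose $\J\subsetneq\J'\subsetneq\tn$ and derive a contradiction: maximality of $\J\cap\dn$ forces $\J'\cap\dn\in\{\J\cap\dn,\dn\}$, the second alternative immediately giving $I\in\J'$, so one must exclude the possibility $\J'\cap\dn = \J\cap\dn$ with $\J\subsetneq\J'$.

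The step I expect to be the main obstacle is exactly this last one, namely showing $\rinfty\subseteq\J$ (equivalently, ruling out a strict proper extension with the same diagonal trace). The approach I would take is to use that for any atom $E$ of $\N$ and any $X\in\J$ the compression $EXE$ lies in $\J\cap\dn$, together with the decomposition $X = E(X) + (X - E(X))$ with $X - E(X)\in\rinfty$, and then exploit that $\J\cap\dn$ being a maximal two-sided (hence $*$-) ideal of the direct-sum C$^{*}$-algebra $\dn$ has a fairly rigid structure (an ultrafilter on the atoms together with possibly a compacts-ideal on one summand). The idea is to chase elements $R\in\rinfty$ through products $DRE$ with $D,E$ suitable atomic projections, to produce enough diagonal elements in $\J$ to either land inside $\J\cap\dn$ and conclude, or to blow up $\J\cap\dn$ to all of $\dn$ and obtain the contradiction.
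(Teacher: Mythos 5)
Your forward direction is sound and is essentially the paper's argument: maximality forces $\J\supseteq\rinfty$, the atomic decomposition $\tn=\dn\oplus\rinfty$ gives $\J=(\J\cap\dn)\oplus\rinfty$, and a strictly larger proper ideal of $\dn$ would yield a strictly larger proper ideal of $\tn$. (Note, though, that even this half is not elementary: the inclusion of $\rinfty$ in every maximal two-sided ideal is \cite[Theorem 3.8]{Orr:MaTwSoIdNeAl}, which the paper points out rests on the Marcus--Spielman--Srivastava proof of the Paving Theorem.)

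The backward direction is where the genuine gap lies, and you have correctly located it: everything reduces to showing $\rinfty\subseteq\J$ knowing only that $\J\cap\dn$ is maximal in $\dn$. Your proposed remedy --- chasing elements $R\in\rinfty$ through products $DRE$ with atomic projections so as to land in $\J\cap\dn$ or blow it up --- will not close this. The difficulty is that $\J$ a priori contains only diagonal elements plus whatever the ideal they generate produces, and to absorb a general $R\in\rinfty$ you would need to find, for the ultrafilter-type maximal ideal $\J\cap\dn$, a set $\sigma$ in the ultrafilter with $\|P_\sigma R P_\sigma\|$ small, where $P_\sigma$ is the corresponding sum of atoms. For very special $R$ (e.g.\ the backward shift in $\tnn$) this can be done by hand via a $3$-colouring of $\NN$, but for a general strictly upper triangular $R$ this is precisely a paving-type statement and is not accessible by elementary manipulation. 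The paper sidesteps this by invoking \cite[Theorem 10.2]{DavidsonOrr:PrBiNeAl}: $\rinfty$ is generated as a two-sided ideal by a sum of three commutators $[G_i,P_i]$ with each $P_i$ a projection in the core, hence central in $\dn$. Maximality of $\J\cap\dn$ then forces one of $P_i,P_i^\perp$ into $\J\cap\dn$ for each $i$, so each commutator $[G_i,P_i]=[G_i,P_i^\perp]$ lies in $\J$, and therefore $\rinfty\subseteq\J$; after that your own concluding argument ($\J=(\J\cap\dn)\oplus\rinfty$ is maximal) goes through. Without some such global generation theorem for $\rinfty$ by central diagonal data, the step you flag as ``the main obstacle'' remains unproved.
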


\begin{proof}
Suppose $\J$ is maximal. Then by \cite[Theorem 3.8]{Orr:MaTwSoIdNeAl}, $\J$
contains $\R^\infty_\N$. It follows that $\J = (\J\cap\dn) \oplus \R^\infty_\N$.
If $\J\cap\dn$ is not maximal then there is a larger proper ideal $\D_0$ of $\dn$.
But then $\D_0\oplus\R^\infty_\N$ is a proper ideal of $\tn$ and strictly larger
than $\J$, contrary to fact.

Suppose on the other hand that $\J\cap\dn$ is maximal. By
\cite[Theorem 10.2]{DavidsonOrr:PrBiNeAl} $\R^\infty_\N$ is generated
as a two-sided ideal by a generator which is the sum of three commutators
$[G_i, P_i]$ ($i=1,2,3$) where $G_i\in\tn$ and $P_i$ is a projection in the core
$\C(\N)$ of $\tn$.
(Recall that the \emph{core} of a nest algebra is the abelian von~Neumann algebra
generated by $\N$.)
Now since $\J\cap\dn$ is a maximal ideal of $\dn$, and
the $P_i$ are in the centre of $\dn$, it follows that one of $P_i, P_i^\perp$
must lie in $\J\cap\dn$ for each $i$. Thus in any event the commutators
$[G_i, P_i] = [G_i, P^\perp_i]$ belong to $\J$ and so $\J$
contains $\R^\infty_\N$. Thus, again, $\J = (\J\cap\dn) \oplus \R^\infty_\N$.
If $\J$ is not maximal then there is a larger proper ideal $\J_0$ of $\tn$.
But then since $\J_0$ also contains $\R^\infty_\N$,
$\J_0 = (\J_0\cap\dn) \oplus \R^\infty_\N$ and so $\J_0\cap\dn$ is a proper
ideal of $\dn$ and larger than $\J\cap\dn$, contrary to fact.
\end{proof}

The proof of Proposition~\ref{prim-max-prp} is deceptively straightforward.
In fact the result cited from \cite{Orr:MaTwSoIdNeAl} depends on
Marcus, Spielman, and Srivastava's proof~\cite{MarcusSpielmanSrivastava:MiChPo}
of the Paving Theorem. Recall (Definition~\ref{isubp-def}) that we write
$\I_\P$ for the unique diagonal ideal contained by the primitive ideal $\P$.

\begin{proposition}\label{prim-contains-cpts-prop}
Let $\N$ be an atomic nest, let $\P$ be a primitive ideal of $\tn$,
and suppose $\P\not=\I_\P$. Then there are non-zero projections in
$\P\setminus\I_\P$.
\end{proposition}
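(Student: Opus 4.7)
The plan is to reduce (via the adjoint anti-isomorphism of Corollary~\ref{diagonal-are-right-primitive-cor}) to $\I_\P = \I_N^-$ for some $N > 0$ in $\N$, and then to split on whether $N > N^-$ or $N = N^-$. When $N > N^-$, set $F := N - N^-$, an atom; the compression $\pi_F\colon\tn\to B(F\H)$, $X \mapsto FXF|_{F\H}$, is a surjective algebra homomorphism with kernel exactly $\I_N^-$, and by the Open Mapping Theorem the induced map $\tn/\I_N^- \to B(F\H)$ is a topological isomorphism. Since $\P$ is norm-closed (being an intersection of closed modular maximal left ideals of the unital algebra $\tn$) with $\I_N^- \subsetneq \P \subsetneq \tn$, the image $\pi_F(\P)$ is a nonzero proper norm-closed two-sided ideal of $B(F\H)$. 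This forces $F$ to be infinite-dimensional and $\pi_F(\P) = K(F\H)$. Any rank-1 projection $P_0$ on $F\H$ then lies in $\pi_F(\P)$; lifting to $Z \in \P$ with $\pi_F(Z) = P_0$ and using that $\P$ is two-sided gives $FZF = P_0 \in \P$ as an operator on $\H$, with $\pi_F(P_0) \neq 0$ so $P_0 \notin \I_N^-$.

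The case $N = N^-$ is harder because every rank-1 projection supported on an atom below $N$ automatically lies in $\I_N^-$. Pick $X \in \P \setminus \I_N^-$ and apply Lemma~\ref{block-diagonal-lemma} to produce $A, B \in \tn$ and a strictly increasing $N_k \nearrow N$ such that $Y := AXB \in \P$ is block-diagonal with respect to $E_k := N_k - N_{k-1}$, each block of norm exceeding $1$. For any $M < N$, choosing $k$ with $N_k \ge M$ yields $E_{k+1} \le N - M$ and hence $\|(N-M)Y(N-M)\| > 1$, so $Y \notin \I_N^-$. Using atomicity, for each $k$ pick an atom $F_k$ of $\N$ below $E_k$; the compression $\pi_{F_k}$ has kernel $\I_{M_k}^-$ with $M_k \le N_k < N$. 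By the incomparability of distinct diagonal ideals (Ringrose's Lemma 4.7 in \cite{Ringrose:OnSoAlOp}), one cannot have $\P \subseteq \I_{M_k}^-$, for otherwise the distinct diagonal ideals $\I_N^-$ and $\I_{M_k}^-$ would both sit inside $\P$. Hence $\pi_{F_k}(\P)$ is a nonzero norm-closed two-sided ideal of $B(F_k\H)$ and contains rank-1 projections; lifting gives pairwise orthogonal rank-1 projections $P_k \in \P$ supported on $F_k\H$, whose support atoms are cofinal below $N$ (for any $M < N$ choose $k$ with $N_{k-1} \ge M$, forcing $F_k \perp M$).

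To combine the $P_k$ into a single projection $P \in \P$ with $P \notin \I_N^-$, I would apply Proposition~\ref{max-left-contains-projections-prp} to the maximal left ideal $\L$ associated with $\P$ (so that $\P \subseteq \L$), extracting a subsequence $(k_n)$ with $P := \sum_n P_{k_n} \in \L$; since $P \in \dn$ and cofinality of $(F_{k_n})$ prevents $P \le M$ for any $M < N$, this $P$ is not in $\I_N^-$. The principal obstacle is promoting $P \in \L$ to $P \in \P$, since primitive ideals are typically strictly smaller than the maximal left ideals containing them. I would address this by a symmetric application of Corollary~\ref{max-right-contains-projections-cor} to a maximal right ideal $\J$ associated with $\P$ via right-primitivity, further refining the subsequence so that $P \in \L \cap \J$, and then exploiting the fact that $P \in \dn$ commutes with every nest projection to combine the left and right one-sided module conditions into the two-sided condition $P\tn \subseteq \L$ that defines membership in $\P$.
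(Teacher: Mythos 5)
Your reduction to $\I_\P = \I_N^-$ and your treatment of the atomic case $N > N^-$ are sound and essentially match the paper (the paper identifies the unique proper ideal strictly above $\I_N^-$ directly; you pass through $B(F\H)$, which is the same content). Your construction of pairwise orthogonal rank-one projections $P_k \in \P$ supported on atoms cofinal below $N$, via compressions to atoms and incomparability of diagonal ideals, is also correct. But there is a genuine gap exactly where you flag ``the principal obstacle'': passing from the individual $P_k \in \P$ to an infinite subsum $P = \sum_n P_{k_n} \in \P$. Proposition~\ref{max-left-contains-projections-prp} only puts $P$ into the maximal left ideal $\L$, and membership in $\P$ is the far stronger condition $P\tn \subseteq \L$. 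Your proposed fix --- intersecting with a maximal right ideal $\J$ coming from right-primitivity and then ``combining'' the one-sided conditions using $P \in \dn$ --- does not work as stated, for two reasons. First, it is not known that an arbitrary left-primitive $\P$ is also right-primitive (Corollary~\ref{diagonal-are-right-primitive-cor} applies only to diagonal ideals, and under CH; the paper's own proof is careful to use exactly one of a maximal left or a maximal right ideal, depending on which kind of primitivity $\P$ has after the adjoint reduction, never both at once). Second, even granting $P \in \L \cap \J$, a diagonal projection lying in a maximal left ideal and in a maximal right ideal need not satisfy $P\tn \subseteq \L$: commuting with $\N$ gives no control over $PT$ for general $T \in \tn$, and no actual argument is offered for this implication.

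The paper closes this gap with machinery your proposal lacks. It manufactures a partial shift $V = \sum_n e_n e_{n+1}^*$ along the chosen atoms that factors through the block-diagonal element $AXB \in \P$, so $V \in \P$; it takes $P$ to be the sum of only the \emph{even}-indexed $e_{2k}e_{2k}^*$, so that $P$ is dominated by a sum of intervals lying in $\I_N^- \subseteq \J$ (this is what Proposition~\ref{max-left-contains-projections-prp} is actually used for); and it then argues by contradiction: if $P \notin \P$ there are $A, B$ with $I - APB \in \J$, the parts of $A$ and $B$ that reach across the selected atoms factor through $V$ and hence lie in $\P$, and the remaining piece $A_1 P B_1$ can be written as $PY + Z$ or as $YP + Z$ with $Z$ nilpotent, forcing the invertible operator $I - Z$ into the proper ideal $\J$. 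The gaps left by using only every other atom are what make $Z$ nilpotent. Some substitute for this argument (or another genuinely new idea) is required; as written, your final step is an assertion rather than a proof.
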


\begin{proof}
We shall prove the result in the case when $\I_\P = \I^-_N$ for some $N>0$ in $\N$.
If, instead, $\I_\P = \I_N^+$ for some $N<I$ then we take adjoints and apply the result
to $\I^-_{N^\perp}\subsetneq\P^*\subseteq\tn^* = \T(\N^\perp)$.
In this case $\P$ is a \emph{right} primitive ideal of $\T(\N^\perp)$ and so we shall
take care that our proof accommodates the case when $\P$ is either left or right primitive.

If $\P$ is a left primitive ideal, let $\J$ be a maximal left ideal
such that $\P$ is the kernel of the left regular module action of $\tn$ on $\tn / \J$.
In the case that $\P$ is right primitive, let $\J$ be a maximal right ideal such that
$\P$ is the kernel of the right regular module action of $\tn$ on $\tn / \J$.

Suppose that $N^- < N$. Note that $\rk(N - N^-)$ cannot be finite for if it were
then $\I_\P = \I^-_N$ would be a maximal ideal of $\tn$ and so $\P=\I_\P$, contrary to hypothesis. If $\rk({N-N^-}) = \infty$ then the only proper ideal strictly containing
$\I_N^-$ is $\{X\in\tn : (N-N^-)X(N-N^-) \text{ is compact}\}$, which must therefore
equal $\P$. Any finite rank projection of the form $P=(N-N^-)P(N-N^-)$ will
serve to establish the result in this case.

For the remainder of the proof, assume that $N=N^-$ and take $X\in\P\setminus\I_N^-$.
By Lemma~\ref{block-diagonal-lemma}
there are $A,B\in\tn$ such that $AXB$ is block diagonal with respect to some
sequence $M_k$ of nest projections strictly increasing to $N$ and each of the
blocks has norm greater than $1$. Replacing $X$ with $AXB$ we can assume
$X = \sum_{k=1}^\infty(M_k - M_{k-1})X(M_k - M_{k-1})$ where the norm of each
term is greater than $1$.

Consider the sequence of intervals $M_{2k+1} - M_{2k}$. These are each
in $\I_N^-$ and so in $\J$.
By Proposition~\ref{max-left-contains-projections-prp} and
Corollary~\ref{max-right-contains-projections-cor}, whether $\J$ is assumed
to be maximal right or maximal left, there is a subsequence $k_n$
such that $\J$ contains $\sum_{n=1}^\infty M_{2k_n + 1} - M_{2k_n}$.
Then for each $n$ find an atom $N^+_n-N_n \le M_{2k_n + 1} - M_{2k_n}$.
Choose vectors $e_n, f_n, g_n$ such that
$e_n e_n^* \le N^+_n-N_n$ and
$f_n$ and $g_n$ are in the range of $M_{2k_n + 2} - M_{2k_n + 1}$ with
$\|f_n\| > \|g_n\| = 1$ and $f_n = X g_n$. Thus,
\[
  V := \sum_{n=1}^\infty e_n e_{n+1}^* =
      \left(\sum_{n=1}^\infty \|f_n\|^{-1} e_n f_n^*\right)
          X\left(\sum_{n=1}^\infty \|f_n\|^{-1}g_n e_{n+1}^*\right)
\]
where both of the sums converge strongly and are in $\tn$ because
\[
  e_nf_n^* = M_{2k_n + 1}(e_nf_n^*) M^\perp_{2k_n + 1}
\]
and
\[
  g_ne_{n+1}^* = M_{2k_n + 2}(g_ne_{n+1}^*)M_{2k_{n+1}}^\perp
               = M_{2k_n + 2}(g_ne_{n+1}^*)M_{2k_n + 2}^\perp
\]
since $k_{n+1}\ge k_n + 1$.

 Thus $V\in\P$. Let
$P := \sum_{k=1}^\infty e_{2k} e_{2k}^* \le \sum_{k=1}^\infty N^+_{2k} - N_{2k}$,
which is dominated by a projection in $\J$ and so is also in $\J$.
We shall show that $P\in\P$.

Suppose for a contradiction that $P\not\in\P$.
It follows, as observed in Remark~\ref{prim-ideals-remark},
that there are $A, B\in\tn$ such that
$I - APB \in\J$. We can assume that $A=AP$ and $B=PB$.
Write $A=A_1+A_2$ where
\[
  A_1 := \sum_{k=1}^\infty N_{2k-1}^\perp A (N^+_{2k} - N_{2k})
  \quad\text{and}\quad
  A_2 := A-A_1
\]
so that $A_2(N^+_{2k} - N_{2k}) = N_{2k-1} A (N^+_{2k} - N_{2k})$.
Likewise, write $B=B_1+B_2$ where
\[
  B_1 := \sum_{k=1}^\infty (N^+_{2k} - N_{2k}) B N_{2k+1}
  \quad\text{and}\quad
  B_2 := B-B_1
\]
so that $(N^+_{2k} - N_{2k})B_2 = (N^+_{2k} - N_{2k}) B N_{2k+1}^\perp$.
The sums for $A_1$ and $B_1$ converge strongly because the sequences of terms
are norm-bounded and have pairwise orthogonal ranges and cokernels.

Now set $A_2' := A_2 V^*$
and $B_2' := V^* B_2$. From the following computations we see that $A_2'$ and $B_2'$
are in $\tn$ since the terms of the sums are in $\tn$:
\begin{align*}
  A_2' &= A_2PV^* = \sum_{k=1}^\infty A_2 (N^+_{2k} - N_{2k}) V^*
      = \sum_{k=1}^\infty N_{2k-1} A (N^+_{2k} - N_{2k}) V^*N_{2k-1}^\perp \\
  B_2' &= V^*PB_2 = \sum_{k=1}^\infty V^* (N^+_{2k} - N_{2k}) B_2
      = \sum_{k=1}^\infty N^+_{2k+1}V^* (N^+_{2k} - N_{2k}) B N_{2k+1}^\perp
\end{align*}

Furthermore, since $VV^* = \sum_{k=1}^\infty e_{k}e_{k}^*$ and
$V^*V = \sum_{k=1}^\infty e_{k+1}e_{k+1}^* = \sum_{k=2}^\infty e_{k}e_{k}^*$,
we have that
\[
  A_2 = A_2 P = A_2PV^*V = A'_2 V \in \P
\]
and
\[
  B_2 = PB_2 = VV^*PB_2 = VB'_2 \in  \P
\]
Since $I - (A_1+A_2)P(B_1+B_2)\in \J$, it now follows that also $I - A_1PB_1\in\J$.

Now note that
\begin{align*}
  A_1 P B_1
      & = \sum_{k=1}^\infty A_1 (N^+_{2k} - N_{2k}) B_1 \\
      & = \sum_{k=1}^\infty N_{2k-1}^\perp A (N^+_{2k} - N_{2k}) B N_{2k+1} \\
      & = \sum_{k=1}^\infty (N_{2k}^+ - N_{2k-1}) A (N^+_{2k} - N_{2k}) B (N_{2k+1} - N_{2k}) \\
      & = \sum_{k=1}^\infty (N_{2k}^+ - N_{2k-1}) C_k (N_{2k+1} - N_{2k})
\end{align*}
where $C_k := A (N^+_{2k} - N_{2k}) B$.
We can decompose $A_1PB_1$ in two ways, either as
\[
  \sum_{k=1}^\infty (N_{2k}^+ - N_{2k}) C_k (N_{2k+1} - N_{2k})
      + \sum_{k=1}^\infty (N_{2k} - N_{2k-1}) C_k (N_{2k+1} - N_{2k})
\]
or as
\[
  \sum_{k=1}^\infty (N_{2k}^+ - N_{2k-1}) C_k (N_{2k}^+ - N_{2k})
      + \sum_{k=1}^\infty (N_{2k}^+ - N_{2k-1}) C_k (N_{2k+1} - N_{2k}^+)
\]
These two cases are of the form $PY + Z$ and $YP+Z$ respectively where in both cases $Z$
is nilpotent.
Recall that $P\in\J$ and so, whether $\J$ is a maximal left ideal or a maximal right ideal,
we conclude that $I - Z \in \J$, which is impossible since this is invertible and $\J$
is proper. From this contradiction we conclude that $P\in\P$.
\end{proof}

\begin{theorem}\label{diag-prim-cases-thm}
Let $\N$ be an atomic nest and let $\P$ be a primitive ideal of $\tn$.
\begin{enumerate}
\item\label{diag-prim-cases-thm:a}
If $\P\cap\dn$ is a maximal two-sided ideal of $\dn$ then $\P$ is
a maximal two-sided ideal of $\tn$.

\item\label{diag-prim-cases-thm:b}
If $\P\cap\dn$ is equal to $\I\cap\dn$ for some diagonal ideal $\I$
then $\P$ is a diagonal ideal and, in fact, $\P=\I$.
\end{enumerate}
\end{theorem}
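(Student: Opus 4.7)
Part (a) follows immediately from Proposition~\ref{prim-max-prp} applied to the two-sided ideal $\J := \P$: since $\P\cap\dn$ is a maximal two-sided ideal of $\dn$, the ``if'' direction of that proposition forces $\P$ itself to be a maximal two-sided ideal of $\tn$.

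For part (b), my plan breaks into three steps. First, I will establish the inclusion $\I\subseteq\P$. Second, since the primitive ideal $\P$ contains a unique diagonal ideal (Theorem~\ref{prim-contains-diagonal-thm}), this inclusion forces $\I=\I_\P$. Third, I will rule out $\P\supsetneq\I_\P$ by invoking Proposition~\ref{prim-contains-cpts-prop}: if $\P\neq\I_\P=\I$, then that proposition supplies a nonzero projection $Q\in\P\setminus\I$, but $Q\in\dn$ and so $Q\in\P\cap\dn=\I\cap\dn\subseteq\I$, a contradiction.

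The real work is the first step, $\I\subseteq\P$. It suffices by symmetry to treat $\I=\I_N^-$ (the ideals $\I_N^+$ are handled by an entirely analogous argument, using $N_n$ decreasing to $N$ and the projections $N$ and $N_n^\perp$ in place of the ones chosen below). The key idea is to exhibit diagonal projections inside $\I\cap\dn=\P\cap\dn$ that absorb, modulo $\P$, every ``block'' of $X\in\I$ except the one governed by the definition of $\I_N^-$. In the atom case $N^-<N$, both $N^-$ and $N^\perp$ lie in $\I_N^-$ by direct computation, hence in $\P$. Expanding $X$ by $I=N^- + (N-N^-) + N^\perp$ and using $P^\perp X P = 0$ for $P\in\N$, every term of the resulting decomposition except $(N-N^-)X(N-N^-)$ has $N^-$ or $N^\perp$ as a left- or right-factor, and hence lies in $\P$. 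So $X\equiv (N-N^-)X(N-N^-)\pmod{\P}$, which is $0$ by equation~(\ref{inminus-at-an-atom-equ}); thus $X\in\P$.

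In the limit case $N=N^->0$, I pick a strictly increasing sequence $N_n\in\N$ converging to $N$. Each $N_n$ and $N^\perp$ lies in $\I_N^-$ (for $M\ge N_n$, $N_n(N-M)=0$), hence in $\P$. The same algebraic reduction, using $I=N_n+(N-N_n)+N^\perp$, gives $X\equiv (N-N_n)X(N-N_n)\pmod{\P}$. The right-hand side now has norm tending to $0$ since $X\in\I_N^-$, and since primitive ideals are norm-closed, $X\in\P$. The main point to guard against is bookkeeping in the block expansion: I need to verify that the terms killed by $\tn$-invariance together with the terms absorbed by $N^-,N^\perp$ (or $N_n,N^\perp$) leave exactly the anticipated surviving block, but this is a routine check because the ``absorbers'' are each individually in $\P$, not merely their sum.
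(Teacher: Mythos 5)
Part (a) matches the paper exactly. For part (b) your overall skeleton (identify $\I$ with $\I_\P$, then exclude $\P\supsetneq\I_\P$ via Proposition~\ref{prim-contains-cpts-prop}) agrees with the paper's, but the identification step is carried out by a genuinely different route. The paper gets $\I=\I_\P$ in one line from the inclusion $\I_\P\cap\dn\subseteq\P\cap\dn=\I\cap\dn$ together with Ringrose's observation that distinct diagonal ideals contain complementary diagonal projections, so that $\I\ne\I_\P$ would put two complementary projections, hence the identity, into the proper ideal $\I$. You instead prove the stronger statement $\I\subseteq\P$ directly: you locate the diagonal projections $N^-,N^\perp$ (resp.\ $N_n,N^\perp$) inside $\I\cap\dn=\P\cap\dn$, absorb all but one block of $X$ into the two-sided ideal $\P$, and in the limit case use norm-closedness of $\P$ together with the fact that $M\mapsto\|(N-M)X(M-N)^{\vphantom{*}}\|$ is decreasing, which guarantees $\|(N-N_n)X(N-N_n)\|\to 0$ for any sequence $N_n$ increasing to $N$; uniqueness of the diagonal ideal inside a primitive ideal (Theorem~\ref{prim-contains-diagonal-thm}) then gives $\I=\I_\P$. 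Your computation is correct and has the virtue of being self-contained -- it avoids citing the complementary-projections fact and yields the independently useful observation that any primitive (indeed any norm-closed two-sided) ideal containing $\I\cap\dn$ contains $\I$ -- at the cost of being longer. One caveat you share with the paper: the final contradiction needs the nonzero projections supplied by Proposition~\ref{prim-contains-cpts-prop} to lie in $\dn$; this is true from that proposition's construction (they are subprojections of atoms, or strong sums of such) but is not part of its statement, so it is worth saying explicitly.
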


\begin{proof}
Case~(\ref{diag-prim-cases-thm:a}) is just Proposition~\ref{prim-max-prp}. To prove
Case~(\ref{diag-prim-cases-thm:b}), suppose that $\P\cap\dn = \I\cap\dn$ for some
diagonal ideal $\I$. First observe that $\I_\P\cap\dn \subseteq\P\cap\dn=\I\cap\dn$.
Now, distinct diagonal ideals contain complementary projections (see the proof
of \cite[Lemma 4.8]{Ringrose:OnSoAlOp} for this fact) and so $\I$ must equal $\I_\P$.
But now if $\P \not=\I_\P$ then by Proposition~\ref{prim-contains-cpts-prop},
$\P$ contains projections which are not in $\I_\P$, contrary to hypothesis.
\end{proof}

We can now distinguish three classes of primitive ideals based on the
diagonal operators they contain.
The first class ($\pmax$) consists of primitive ideals for which $\P\cap\dn$
is a maximal ideal of $\dn$, and this consists of the maximal two-sided ideals of $\tn$.
The second class ($\pmin$) consists of primitive ideals for which $\P\cap\dn = \I\cap\dn$
for some diagonal ideal $\I$ and this class consists of diagonal ideals.
The third class ($\pint$) consists of the remaining primitive ideals for which $\P\cap\dn$
takes neither its minimal nor its maximal values.

The maximal ideals of a general nest algebra were completely described in
\cite[Corollary 3.10]{Orr:MaTwSoIdNeAl}. In particular when $\N$ is atomic
the ideals in $\pmax$ are precisely the ideals of the form $\D_0 \oplus\R^\infty_\N$
where $\D_0$ is a maximal two-sided ideal of $\dn$. The ideals in $\pmin$ are
the primitive ideals which are also diagonal ideals. Trivially all ideals
of the form $\I_N^-$ where $N>N^-$ (or, equivalently, $I_N^+$ where $N<N^+$)
are included in this class. (See the first paragraph of the proof of
Theorem~\ref{diagonal-are-primitive-thm} for details.) By
Theorem~\ref{diagonal-are-primitive-thm}, if we assume the Continuum Hypothesis
then $\pmin$ consists of \emph{all} the diagonal ideals. Without the assumption
of the Continuum Hypothesis we cannot say which additional diagonal ideals belong
to $\pmin$. The structure of $\pint$ is more delicate. In the following section we will
see examples of representatives of all three classes.

\section{The infinite upper triangular operators}\label{infinite-upper-sect}

Throughout this section, let $\H = \ell^2(\NN)$ and consider the algebra $\tnn$ of all
upper triangular operators with respect to the standard basis of $\ell^2(\NN)$.
Recall that we write $\{e_i\}_{i=1}^\infty$ for the standard basis and let $N_n$
be the projection onto the span of $\{e_1,\ldots,e_n\}$, and
$\N := \{N_n : n\in\NN\} \cup \{0, I\}$. Then $\tnn := \tn$ is the algebra
of infinite upper triangular operators with respect to the $e_i$ and $\R^\infty_\N$
is simply the ideal of infinite \emph{strictly} upper triangular operators.
Moreover, the diagonal ideals of $\tnn$ are precisely the ideals
$\I_1, \I_2, \I_3,\ldots; \I_\infty$ where $\I_n := \I_{N_n}^-$
for $1\le n <\infty$ and $\I_\infty := \I_I^-$. Note that $\I_\infty$ coincides with
the compact operators of $\tnn$, a fact which we shall develop below.

\subsection{The quasitriangular algebra}

Let $\K(\H)$ be the set of all compact operators in $B(\H)$ and write $\qt(\NN)$ for the
quasitriangular algebra $\T(\NN) + \K(\H)$. By \cite{FallArvesonMuhly:PeNeAl} and,
in more generality, \cite{DavidsonPower:BeApCAl}, $\qt(\NN)$ is a norm-closed algebra
in $\bh$ and the canonical isomorphism between $\qt(\NN)/\K(\H)$ and
$\T(\NN)/(\T(\NN)\cap\K(\H))$ is isometric.

\begin{corollary}
Assuming the Continuum Hypothesis,
$\T(\NN)/(\T(\NN)\cap\K(\H))$ is a left (resp.\ right) primitive algebra.
\end{corollary}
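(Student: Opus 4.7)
The plan is very short because most of the work has already been done in Section~\ref{diagonal-ideal-are-primitive-sect}. The key observation, already noted in the paragraph preceding the statement, is that the diagonal ideal $\I_\infty = \I_I^-$ of $\tnn$ coincides with the ideal $\tnn \cap \K(\H)$ of compact operators in $\tnn$. Indeed, by Definition, $\I_I^-$ consists of those $X\in\tnn$ for which $\inf\{\|(I-M)X(I-M)\| : M < I\} = 0$, i.e., for which $\|N_n^\perp X N_n^\perp\|\to 0$, which in the upper-triangular setting is readily seen to characterize compactness.

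Granting this identification, $\tnn/(\tnn\cap\K(\H))$ is literally the quotient $\tnn/\I_\infty$. By Theorem~\ref{diagonal-are-primitive-thm}, under the Continuum Hypothesis $\I_\infty$ is a primitive ideal of $\tnn$. Now recall from Remark~\ref{prim-ideals-remark} that a primitive ideal of an algebra $A$ is by definition the annihilator of a simple left $A$-module $M$; the action of $A$ on $M$ descends to a faithful irreducible action of $A/\I_\infty$, so $A/\I_\infty$ is itself a (left) primitive algebra. This gives the left primitive assertion.

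For the right primitive version, Corollary~\ref{diagonal-are-right-primitive-cor} tells us that, assuming CH, $\I_\infty$ is also a right primitive ideal of $\tnn$, i.e., the annihilator of a simple right $\tnn$-module. The same quotient argument then furnishes a faithful simple right $\tnn/\I_\infty$-module, so $\tnn/(\tnn\cap\K(\H))$ is right primitive as well. There is no genuine obstacle here; the statement is essentially a reformulation of Theorem~\ref{diagonal-are-primitive-thm} together with Corollary~\ref{diagonal-are-right-primitive-cor} specialized to $\N$ the canonical nest on $\ell^2(\NN)$ and to $N=I$. The only small point to verify carefully is the identification $\I_\infty = \tnn\cap\K(\H)$, which can be done in one or two lines using the fact that on $\ell^2(\NN)$ an upper-triangular operator $X$ is compact if and only if $\|N_n^\perp X N_n^\perp\| = \|N_n^\perp X\|\to 0$.
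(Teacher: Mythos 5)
Your proof is correct and follows essentially the same route as the paper's one-line argument: identify $\tnn\cap\K(\H)$ with the diagonal ideal $\I_\infty=\I_I^-$ and then invoke Theorem~\ref{diagonal-are-primitive-thm} and Corollary~\ref{diagonal-are-right-primitive-cor}. The extra details you supply (the verification that $\I_\infty$ is the compacts of $\tnn$, and the passage from a primitive ideal to a primitive quotient algebra) are accurate but are taken for granted in the paper.
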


\begin{proof}
$\K(\H) \cap \tnn = \I_\infty$, which is a left primitive ideal by
Theorem~\ref{diagonal-are-primitive-thm} and a right primitive ideal by
Corollary~\ref{diagonal-are-right-primitive-cor}.
\end{proof}

\begin{corollary}
Assuming the Continuum Hypothesis, $\qt(\NN)/\K(\H)$
is a left (resp.\ right) primitive algebra, and $\K(\H)$
is a  left (resp.\ right) primitive ideal in $\qt(\NN)$.
\end{corollary}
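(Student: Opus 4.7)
The plan is to reduce both claims to the preceding corollary via the canonical isomorphism. Recall that by \cite{FallArvesonMuhly:PeNeAl} and \cite{DavidsonPower:BeApCAl}, the natural map
\[
  \T(\NN)/(\T(\NN)\cap\K(\H)) \longrightarrow \qt(\NN)/\K(\H)
\]
induced by the inclusion $\T(\NN)\hookrightarrow\qt(\NN)$ is an isometric algebra isomorphism. Since $\T(\NN)\cap\K(\H)=\I_\infty$, the preceding corollary gives that the domain is left (resp.\ right) primitive, and primitivity is manifestly preserved under algebra isomorphism, so $\qt(\NN)/\K(\H)$ is left (resp.\ right) primitive. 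This proves the first assertion.

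For the second assertion, I would invoke the standard ring-theoretic fact (see, e.g., \cite[Chapter III]{BonsallDuncan:CoNoAl}) that a two-sided ideal $\J$ of a unital algebra $A$ is left (resp.\ right) primitive exactly when the quotient $A/\J$ is a left (resp.\ right) primitive algebra. Applied to $A=\qt(\NN)$ and $\J=\K(\H)$, this immediately converts the first assertion into the conclusion that $\K(\H)$ is a left (resp.\ right) primitive ideal of $\qt(\NN)$.

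There is no real obstacle: everything is formal once the isometric isomorphism of quotients is in hand. The only care needed is to verify that the isomorphism does indeed intertwine the module structures, which it does because it is an algebra isomorphism and primitivity depends only on the algebra structure (a primitive algebra is, by definition, one admitting a faithful simple module, and such a module pulls back across any algebra isomorphism). Thus the two corollary statements are really a single statement phrased two ways.
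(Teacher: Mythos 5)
Your proof is correct and follows exactly the route the paper intends: the paper gives no explicit proof for this corollary, treating it as immediate from the canonical isometric isomorphism $\qt(\NN)/\K(\H)\cong\T(\NN)/(\T(\NN)\cap\K(\H))$ quoted earlier, the preceding corollary, and the standard equivalence between primitivity of a two-sided ideal and primitivity of the quotient algebra. Nothing is missing.
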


\subsection{A catalogue of primitive ideals}

Clearly $\pmin$ contains $\{\I_1, \I_2, \ldots\}$. Assuming the Continuum Hypothesis
then by Theorem~\ref{diagonal-are-primitive-thm}
\[
  \pmin = \{\I_1, \I_2, \ldots\} \cup \{\I_\infty\}
\]
By \cite[Corollary 3.10]{Orr:MaTwSoIdNeAl} the ideals of $\pmax$ are precisely
the ideals of the form $\D_0 \oplus\R^\infty_\N$ where $\D_0$ is a maximal ideal
of $\dn$. In this case $\dn$ is naturally identified with $\ell^\infty(\NN)$
and its maximal ideal space with the sequences vanishing at points of $C(\beta\NN)$.
The maximal ideals of $\tnn$ corresponding to points of $\NN$ are precisely the
$\I_n$ and so we can write
\[
  \pmax = \{\I_1, \I_2, \ldots\} \cup \{\D_x \oplus\R^\infty_\N : x\in\beta\NN\setminus\NN\}
\]
where $\D_x$ is the maximal ideal of $\dn$ corresponding to sequences in
$\ell^\infty(\NN)$ vanishing at $x\in\beta\NN$.

There remains the set $\pint$ of primitive ideals which are neither diagonal ideals
nor maximal ideals.
These are the primitive ideals $\P$ where $\P\cap\dn$ is a closed ideal of $\dn$
corresponding to an ideal of $\ell^\infty(\NN)$ which strictly contains $c_0(\NN)$
and is not maximal. We cannot give a complete catalogue of these ideals but
we can provide a rich set of examples. 

Consider the following special case of a general construction of epimorphisms
between nest algebras, taken from Corollary~5.3 of \cite{DavidsonHarrisonOrr:EpNeAl}.
Let $0\le m_k < n_k < +\infty$ be integers such that the intervals
$(m_k, n_k]$ are pairwise disjoint and let $\U$ be a free ultrafilter
on $\NN$. Suppose that $\lim_{k\in\U} n_k - m_k = +\infty$.
Let $U_k:\ell^2(\NN)\rightarrow\ell^2(\NN)$
be the partial isometry mapping $e_i$ to $e_{i - m_k}$ when
$m_k < i \le n_k$ and zero otherwise. For $X\in\tnn$ define
\[
  \phi(X) := \lim_{k\in\U} U_k X U_k^*
\]
where convergence is in the weak operator topology and the limit always exists
by \textsc{WOT}-compactness of the unit ball.
Then by \cite[Corollary~5.3]{DavidsonHarrisonOrr:EpNeAl} this map is an epimorphism
of $\tnn$ onto $\tnn$.
Note also that $\phi$ is a *-homomorphism of the diagonal of $\tnn$ onto itself.

If $\phi$ is such an epimorphism of $\tnn$ onto $\tnn$ and $\pi$ is an irreducible
representation of $\tnn$ then clearly $\pi\circ\phi$ is also an irreducible
representation of $\tnn$.
If $\ker{\pi}$ is in $\pmax$ then so is $\ker \pi\circ\phi$. However, as we shall see,
if $\ker\pi \in\pmin\setminus\pmax$ then $\ker \pi\circ\phi$ will be
in $\pint$ and this provides a rich supply of examples of
primitive ideals in $\pint$.

Assuming the Continuum Hypothesis, $\I_\infty\in\pmin\setminus\pmax$, so consider
the primitive ideal $\P = \phi^{-1}(\I_\infty)$.
Note that $\phi$ annihilates $\I_\infty$ and so $\I_\infty$ is the unique diagonal ideal
in $\P$.
Writing $\Delta(X)$ for the diagonal
expectation $\sum_{k=1}^\infty (N_{n_k} - N_{m_k})X(N_{n_k} - N_{m_k})$,
observe that $\ker\Delta\subseteq\ker\phi\subseteq\P$ and so $\P\not=\I_\infty$.
Thus $\P\not\in\pmin$. On the other hand, $\P\not\in\pmax$ since, by
\cite[Theorem 3.8]{Orr:MaTwSoIdNeAl}, every maximal ideal of $\tnn$ contains
$\R^\infty_\N$, but $\P$ does not contain the unilateral backward shift $U$
since $\phi(U) = U \not\in \I_\infty$. Thus $\P\not\in\pmin$ and $\P\not\in\pmax$,
and so $\P\in\pint$.

In fact this construction readily yields uncountably many incomparable ideals
in $\pint$. For fix projections $P_k := N_{n_k}-N_{m_k}$ where
$\lim_{k\rightarrow+\infty} n_k - m_k =+\infty$ and let $\U$ be a fixed free ultrafilter.
As is well-known we can find an uncountable collection $\Sigma$ of infinite
subsets of $\NN$ with the property that distinct members of $\Sigma$ intersect
only in finite sets. For $\sigma\in\Sigma$, list the elements of $\sigma$ in order
as $s_k$ and build an ultrafilter epimorphism $\phi_\sigma:\tnn\rightarrow\tnn$
as above, this time employing the intervals $P_{s_k}$ and the ultrafilter $\U$.
Write $\Delta_\sigma(X)$ for the diagonal expectation $\sum_{k\in\sigma} P_kXP_k$.
As before, $\ker\Delta_\sigma\subseteq\ker\phi_\sigma$.
Now for any $\sigma\not=\sigma'$,
$\phi_{\sigma}^{-1}(\I_\infty)\not=\phi_{\sigma'}^{-1}(\I_\infty)$, for otherwise
\[
  \phi_{\sigma}^{-1}(\I_\infty)
      = \phi_{\sigma'}^{-1}(\I_\infty)
      \supseteq \ker\Delta_{\sigma} + \ker\Delta_{\sigma'} + \I_\infty
      = \tnn
\]

We can also exhibit infinite chains of ideals in $\pint$ for since
$\phi^{-1}(\I_\infty) \supsetneq \I_\infty$, the ideals
$\P_k := \phi^{-1}(\phi^{-1}(\cdots\phi^{-1}(\I_\infty)\cdots))$
form a chain of distinct ideals in $\pint$ for any fixed epimorphism
$\phi:\tn\rightarrow\tn$.

\subsection{Some properties of ideals in $\pint$}

Although the ultrafilter epimorphism construction of ideals in $\pint$
is not representative, we can prove some properties which all
ideals in $\pint$ share with the ultrafilter construction. These results
are, however, tightly bound to the case of $\tnn$ 
(especially Proposition~\ref{prim-contains-kernel-expectation-prp})
and it is unclear how they might be extended.

\begin{proposition}\label{prim-contains-kernel-expectation-prp}
Let $\P$ be a primitive ideal of $\tnn$ and suppose $\P\supsetneq\I_\infty$.
Then there is an increasing sequence of integers
$n_k$ such that $\P$ contains
\[
  \{ X\in\tn : (N_{n_k} - N_{n_{k-1}})X(N_{n_k} - N_{n_{k-1}}) = 0 \text{ for all $k$} \}
\]
\end{proposition}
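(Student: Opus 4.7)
The plan is to define $(n_k)$ by applying Lemma~\ref{block-diagonal-lemma} to some $Y\in\P\setminus\I_\infty$, and then show that every $X\in\tnn$ with $P_k X P_k = 0$ for all $k$ (where $P_k:=N_{n_k}-N_{n_{k-1}}$) lies in $\P$. Since $\I_\infty=\K(\H)\cap\tnn$ and $\P\supsetneq\I_\infty$, choose $Y\in\P$ non-compact; then $i^-_I(Y)>0$ in the canonical nest on $\ell^2(\NN)$ (using $I=I^-$). Lemma~\ref{block-diagonal-lemma} with $N=I$ then produces $A,B\in\tnn$ and a strictly increasing sequence $(n_k)$ such that $Y':=AYB\in\P$ is block-diagonal with respect to $P_k$ and satisfies $\|P_kY'P_k\|>1$. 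The proof of Lemma~\ref{block-diagonal-lemma} in fact provides $Y'$ in explicit rank-one form, $Y'=\sum_k c_k x_k z_k^*$ with $c_k>1$, where the unit vectors $x_k,z_k$ lie in disjoint ``lower'' and ``upper'' sub-intervals of $P_k$ separated by a non-empty middle sub-interval.

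Now fix $X\in\tnn$ with $P_kXP_k=0$ for all $k$, and write $X=\sum_{j<k}P_jXP_k$ formally. For pairs with $k-j\ge 2$: any rank-one $uv^*$ with $u\in P_j\H$ and $v\in P_k\H$ can be written as $A'Y'B'$ where, picking $l$ strictly between $j$ and $k$,
\[
  A' := u x_l^*,\qquad B' := \frac{1}{c_l\|x_l\|^2\|z_l\|^2}\, z_l v^*.
\]
The positional separation built into $Y'$---namely $u\in P_j$ lies strictly below $x_l$, and $z_l$ lies strictly below $v\in P_k$---ensures $A',B'\in\tnn$, and orthogonality of $\{x_m\}$ and of $\{z_m\}$ gives $A'Y'B'=uv^*$, so $uv^*\in\P$. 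Since $P_jXP_k$ is finite-rank (both $P_j,P_k$ are), it is a finite sum of such rank-one pieces, whence $P_jXP_k\in\P$. In the adjacent case $k=j+1$, $P_jXP_{j+1}$ is finite-rank and hence in $\I_\infty\subseteq\P$ automatically.

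The main obstacle is that $\sum_{j<k}P_jXP_k$ converges to $X$ only in the strong operator topology, not in operator norm, so membership of the summands in $\P$ does not immediately yield $X\in\P$. The plan to overcome this is to invoke strong constructibility of $\P$ (Corollary~\ref{primitives-are-strongly-constructible-cor}): writing $\P$ as an intersection of maximal one-sided ideals each specified by a net $(N_\alpha,x_\alpha)$ with unit vectors, it suffices to verify $\|N_\alpha^\perp Xx_\alpha\|\to 0$ along each specifying net. The uniform bounds on the factorizations $A',B'$ above, combined with $\|N_\alpha^\perp Y'x_\alpha\|\to 0$ (since $Y'\in\P$), should control the contributions from the $k-j\ge 2$ pieces. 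The super-diagonal contributions $\sum_j P_jXP_{j+1}$ require more care; a natural resolution is to pass to a sparser subsequence of $(n_k)$, whose coarser decomposition absorbs adjacent-block interactions into within-block ones that vanish by hypothesis.
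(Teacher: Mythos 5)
Your construction of the block decomposition is fine as far as it goes, but the argument breaks exactly at the point you flag, and the proposed repair does not close the gap. Two problems. First, the factorization through $Y'$ is too weak: $Y'$ is block diagonal with \emph{rank-one} blocks, so for any interval $N-M$ of the nest, $\rk\,(N-M)Y'(N-M)$ is at most the number of blocks inside $N-M$. A general $X$ with $P_kXP_k=0$ has off-diagonal blocks $P_jXP_k$ of rank up to $\min(\rk P_j,\rk P_k)$, while only $k-j-1$ intermediate rank-one ``channels'' $x_lz_l^*$ are available between blocks $j$ and $k$; each channel can carry only one rank-one piece, so even a single block $P_jXP_k$ of moderate rank cannot be written as one product $A'Y'B'$, let alone the full off-diagonal part. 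Summing finitely many factorizations handles one block but not the infinite sum, and there is no uniform bound: what your $\|u\|\|v\|$ estimates add up to is essentially the trace norm of $P_jXP_k$, which can grow like $\rk(P_jXP_k)\cdot\|X\|$. Second, the strong-constructibility route reduces to showing $\|N_\alpha^\perp Xx_\alpha\|\to0$ for each net specifying a maximal left ideal $\L\supseteq\P$, and the only leverage available is a factorization of $X$ modulo $\L$ through an element of $\P$ --- which is precisely the factorization you do not have. (Membership in $\P$ is not closed under SOT limits of partial sums, as $\I_\infty$ itself shows.)

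The paper's proof avoids all of this by first using Proposition~\ref{prim-contains-cpts-prop} to produce an infinite-rank \emph{projection} $P\in\P$, then choosing $n_k$ so that $\rk(N_{n_{k+1}}-N_{n_k})P\ge\rk N_{n_k}$. Because $P$ is so large in every block, the off-block-diagonal part $Y_1$ of any $X$ in the candidate ideal satisfies $\rk(N-M)Y_1(N-M)\le\rk(N-M)P$ for every interval, and the factorization theorem of Orr--Pitts then yields $Y_1=APB\in\P$ in a single step, with no infinite summation of rank-one pieces. The remaining block-diagonal part $Y_0$, taken with respect to a staggered, overlapping family of intervals (which is why the kernel condition is imposed on the doubled intervals $N_{n_{2k+2}}-N_{n_{2k}}$), is nilpotent of order two, so $I-Y_0$ is invertible and cannot lie in the proper left ideal $\L$; this contradiction completes the proof. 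To repair your argument you would need to replace the non-compact $Y$ by the projection of Proposition~\ref{prim-contains-cpts-prop} and invoke a genuine factorization theorem rather than assembling rank-one pieces.
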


\begin{proof}
Let $\L$ be a maximal left ideal such that $\P$ is the kernel of the left-regular
representation on $\tn/\L$.
By Proposition~\ref{prim-contains-cpts-prop}, $\P$ contains a projection
$P\not\in\I^-_I$. Choose a subsequence of nest projections $N_{n_k}$ such that
\[
  \rk(N_{n_{k+1}} - N_{n_k})P \ge \rk N_{n_k}
\]
for all $k$. We shall show that if
\[
  \S := \{ 
      X\in\tn : (N_{n_{2k+2}} - N_{n_{2k}})X(N_{n_{2k+2}} - N_{n_{2k}}) = 0
          \text{ for all $k$} \}
\]
then $\S\subseteq\P$. By Remark~\ref{prim-ideals-remark}, since $\S$ is a two-sided
ideal of $\tn$, if $\S\subseteq\L$ then $\S\subseteq\P$, so suppose for a contradiction that
$\S\not\subseteq\L$.
By maximalty of $\L$, $\S + \L = \tn$ and so there is an $X\in\S$
such that $I-X\in\L$. Decompose $X$ as $Y_0 + Y_1$ where
\[
  Y_0 := \sum_{k=1}^\infty (N_{n_{2k+1}} - N_{n_{2k-1}})X(N_{n_{2k+1}} - N_{n_{2k-1}})
\]
and $Y_1 := X - Y_0$. Observe that therefore
\begin{equation}\label{prim-contains-kernel-expectation-prp:zero-block}
  (N_{n_{k+2}} - N_{n_k})Y_1(N_{n_{k+2}} - N_{n_k}) = 0
\end{equation}
for all $k$.

Now take fixed arbitrary $M < N < I$ in $\N$ and consider two cases. First, if
$N-M$ does not dominate any $N_{n_{k+1}} - N_{n_k}$ then there must be a 
$k$ such that $N-M \le N_{n_{k+2}} - N_{n_k}$, and so $(N-M)Y_1(N-M) = 0$.
On the other hand if $N-M$ does dominate some $N_{n_{k+1}} - N_{n_k}$, take
$k$ to be the largest possible (which exists since $N<I$) and observe that,
by (\ref{prim-contains-kernel-expectation-prp:zero-block}),
\begin{eqnarray*}
  \rk (N-M)Y_1(N-M) &=& \rk N_{n_k}(N-M)Y_1(N-M)  \\
    &\le& \rk N_{n_k}                             \\
    &\le& \rk (N_{n_{k+1}} - N_{n_k})P            \\
    &\le& \rk (N - M)P
\end{eqnarray*}
It follows that in either case
\[
  \rk (N-M)Y_1(N-M) \le \rk (N - M)P.
\]
Since the right-hand side is infinite if $N=I$, the inequality is valid for all $M<N$
in $\N$. It follows immediately from \cite[Theorem 2.6]{OrrPitts:FaTrOp} that $Y_1$
factors through $P$ as $Y_1 = APB$ for some $A, B\in\tn$, and so $Y_1\in\P\subseteq\L$,
whence $I-Y_0\in\L$. 

However since $X\in\S$ the terms of the sum for $Y_0$ are
\begin{align*}
  (N_{n_{2k+1}} - N_{n_{2k-1}}) & X(N_{n_{2k+1}} - N_{n_{2k-1}})  \\
    &= (N_{n_{2k}} - N_{n_{2k-1}})X(N_{n_{2k+1}} - N_{n_{2k}})
\end{align*}
so that $Y_0$ is nilpotent of order $2$. Thus $I-Y_0$ cannot belong to the proper
left ideal $\L$, which is a contradiction.
\end{proof}

Let $E_i$ ($i\in\NN$) be a set of pairwise orthogonal intervals of $\N$.
For $\sigma\subseteq\NN$ let $P_\sigma := \sum_{i\in\sigma}E_i$ and
$\Delta_\sigma(X) := \sum_{i\in\sigma} E_iXE_i$. For convenience
write $\Delta$ for $\Delta_\NN$. The last result shows that, at least in $\tnn$,
primitive ideals which are not in $\pmin$ must contain $\ker\Delta$ for
suitable $\{E_i\}$. The next two lemmas explore the consequences of a primitive
ideal containing $\ker\Delta$, and hold for general nest algebras.

\begin{lemma}\label{prim-ultrafilter-lemma}
Let $\P$ be a primitive ideal of $\tn$ and suppose $\ker\Delta\subseteq\P$.
Then $\Sigma := \{\sigma\subseteq\NN : \ker\Delta_\sigma\subseteq\P\}$
is an ultrafilter.
\end{lemma}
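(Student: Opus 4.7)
The plan is to identify each $\sigma\in\Sigma$ with a projection statement in $\P$; specifically, I will show
\[
  \sigma\in\Sigma \iff P_{\sigma^c}\in\P,
\]
where $\sigma^c := \NN\setminus\sigma$. The ``only if'' direction is immediate from the pairwise orthogonality of the $E_i$: since $\Delta_\sigma(P_{\sigma^c}) = 0$, we get $P_{\sigma^c}\in\ker\Delta_\sigma\subseteq\P$. For ``if'', I would use the standing hypothesis $\ker\Delta\subseteq\P$ to deduce $X\equiv\Delta(X)\pmod{\P}$ for every $X\in\tn$; when in addition $X\in\ker\Delta_\sigma$, then $\Delta(X)=\Delta_{\sigma^c}(X)=P_{\sigma^c}\Delta(X)P_{\sigma^c}$, which lies in $\P$ as soon as $P_{\sigma^c}$ does.

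Given this reformulation, the filter axioms are essentially bookkeeping with the ideal $\P$. The set $\NN$ lies in $\Sigma$ since $P_\emptyset=0$, while $\emptyset$ is excluded since $\ker\Delta_\emptyset=\tn$ is not contained in the proper ideal $\P$. Upward closure follows because $\sigma\subseteq\tau$ implies $P_{\tau^c}=P_{\tau^c}P_{\sigma^c}\in\P$, and closure under pairwise intersection follows from
\[
  P_{(\sigma\cap\tau)^c} = P_{\sigma^c} + P_{\tau^c} - P_{\sigma^c}P_{\tau^c},
\]
each term of which lies in $\P$.

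The hard part, and the only place where primitivity (as opposed to mere two-sidedness) of $\P$ will actually be used, is the ultra property: for every $\sigma\subseteq\NN$ at least one of $P_\sigma$, $P_{\sigma^c}$ lies in $\P$. My plan is to show that the image $\overline{P_\sigma}$ is a central idempotent of the primitive quotient $\tn/\P$. Centrality follows by combining $X\equiv\Delta(X)\pmod{\P}$ with the observation that every $\Delta(X)$ is block diagonal with respect to the $E_i$ and hence commutes with the block-diagonal projection $P_\sigma$. I then invoke the standard consequence of Schur's lemma that the center of a primitive algebra embeds into the commutant of a faithful simple module, which is a division ring; in particular, its only idempotents are $0$ and $1$. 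Thus $\overline{P_\sigma}\in\{0,1\}$: if $\overline{P_\sigma}=0$ then $P_\sigma\in\P$ (so $\sigma^c\in\Sigma$), and if $\overline{P_\sigma}=1$ then $I-P_\sigma\in\P$, from which $P_{\sigma^c}=P_{\sigma^c}(I-P_\sigma)\in\P$ (so $\sigma\in\Sigma$). Exactly one of $\sigma,\sigma^c$ will therefore lie in $\Sigma$, completing the verification that $\Sigma$ is an ultrafilter. The main conceptual obstacle is noticing that passing to $\tn/\P$ makes $P_\sigma$ central --- once that is in hand, the idempotent-triviality argument is standard.
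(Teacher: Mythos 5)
Your proof is correct and follows essentially the same route as the paper: the crux in both is that $[P_\sigma, X]\in\ker\Delta\subseteq\P$ makes $P_\sigma$ central modulo $\P$, after which irreducibility forces its image to be $0$ or $I$ (the paper phrases this via invariant subspaces of the irreducible representation rather than via the division-ring commutant, but the content is the same). Your reformulation $\sigma\in\Sigma\iff P_{\sigma^c}\in\P$ and the projection identities for the filter axioms are a harmless repackaging of the paper's direct kernel inclusions $\ker\Delta_\tau\subseteq\ker\Delta_\sigma$ and $\ker\Delta_{\sigma\cap\tau}=\ker\Delta_\sigma+\ker\Delta_\tau$.
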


\begin{proof}
$\Sigma$ itself is non-empty since $\NN\in\Sigma$, and
the sets in $\Sigma$ are non-empty since $\ker\Delta_\emptyset = \tn$.
If $\tau\supseteq\sigma$ and $\sigma\in\Sigma$ then
$\ker\Delta_\tau \subseteq\ker\Delta_\sigma\subseteq\P$ and so $\tau\in\Sigma$.
If $\sigma,\tau\in\Sigma$ then
$\ker\Delta_{\sigma\cap\tau} = \ker\Delta_\sigma + \ker\Delta_\tau\subseteq\P$,
and so $\sigma\cap\tau\in\Sigma$. Thus $\Sigma$ is a filter.

Let $\pi:\tn\rightarrow\L(V)$ be an irreducible representation with $\P=\ker\pi$.
For any $\sigma\subseteq\NN$ and $X\in\tn$, $P_\sigma X - XP_\sigma\in\ker\Delta$
and so $\pi(P_\sigma)$ commutes with $\pi(\tn)$. Thus $\ran(\pi(P_\sigma))$ is
an invariant subspace of $\pi(\tn)$ and so $\pi(P_\sigma) = 0, I$.
Suppose that $P_\sigma = I$ and so $P_{\sigma^c}=0$.
Then for any $X\in\tn$,
\[
  X - \Delta_\sigma(X) - \Delta_{\sigma^c}(X) \in \ker\Delta\subseteq \P
\]
and so 
\[
  \pi(X) = \pi(\Delta_\sigma(X) + \Delta_{\sigma^c}(X))
  = \pi(\Delta_\sigma(X)P_\sigma + \Delta_{\sigma^c}(X)P_{\sigma^c})
  = \pi(\Delta_\sigma(X))
\]
whence $\ker\Delta_\sigma\subseteq\P$ and $\sigma\in\Sigma$.
Likewise, if $P_\sigma=0$, then $\sigma^c\in\Sigma$.
Thus $\Sigma$ is an ultrafilter.
\end{proof}

\begin{lemma}\label{decompose-intervals-prp}
Let $\P$ be a primitive ideal of $\tn$ and suppose $\ker\Delta\subseteq\P$.
Suppose that for each $i$ we can decompose $E_i$ as the sum $E^0_i + E^1_i$ of
intervals of $\N$. Then $\P$ contains one of $\ker\Delta^j$ where
$\Delta^j(X) = \sum_{i=1}^\infty E^j_i X E^j_i$.
\end{lemma}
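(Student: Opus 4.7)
The plan is to show, under the hypothesis $\ker\Delta\subseteq\P$, that $Y_0 X Y_1 \in \P$ for every $Y_0\in\ker\Delta^0$, $X\in\tn$, and $Y_1\in\ker\Delta^1$ (after possibly swapping labels $0\leftrightarrow 1$ so that $E_i^0$ is the lower half of $E_i$ in the nest order). Irreducibility of the representation realizing $\P$ will then force one of $\ker\Delta^j\subseteq\P$.

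The heart is an algebraic identity: if $Z_0,Z_1\in\Delta(\tn)$ satisfy $\Delta^0(Z_0)=0$ and $\Delta^1(Z_1)=0$, then $Z_0 Z_1=0$. Since both operators are block-diagonal with respect to $\{E_i\}$, one has $Z_0 Z_1 = \sum_i E_i Z_0 E_i Z_1 E_i$. Inside each $E_i$-block, decompose along $E_i = E_i^0 + E_i^1$; upper-triangularity of $\tn$ forces $E_i^1 Z_j E_i^0 = 0$, so that $E_i Z_0 E_i$ and $E_i Z_1 E_i$ take the $2\times 2$ block forms
\[
  \begin{pmatrix}0 & *\\ 0 & *\end{pmatrix}
  \quad\text{and}\quad
  \begin{pmatrix}* & *\\ 0 & 0\end{pmatrix}
\]
respectively, whose matrix product is zero.

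To lift this to the full statement, one replaces $Y_0 X$ and $Y_1$ by their $\Delta$-images modulo $\P$, using $\ker\Delta\subseteq\P$. A short computation exploiting upper-triangularity of $X$ shows $E_i^0 Y_0 X E_i^0 = E_i^0 Y_0 E_i^0 X E_i^0 = 0$, so $\Delta(Y_0 X)\in\Delta(\tn)\cap\ker\Delta^0$; likewise $\Delta(Y_1)\in\Delta(\tn)\cap\ker\Delta^1$. Since $Y_0 X - \Delta(Y_0 X)$ and $Y_1 - \Delta(Y_1)$ both lie in $\ker\Delta\subseteq\P$, this yields $Y_0 X Y_1 \equiv \Delta(Y_0 X)\,\Delta(Y_1) \equiv 0\pmod{\P}$ by the identity of the previous paragraph.

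For the final step, let $\pi\colon\tn\to\L(V)$ be an irreducible left module with kernel $\P$. If $\ker\Delta^1\not\subseteq\P$, pick $Y_1\in\ker\Delta^1$ and $v\in V$ with $w := \pi(Y_1) v \neq 0$; by irreducibility $\pi(\tn)w = V$, so every $v'\in V$ has the form $\pi(X) w$ for some $X\in\tn$. For any $Y_0\in\ker\Delta^0$, $\pi(Y_0)v' = \pi(Y_0 X Y_1)v = 0$, hence $\pi(Y_0) = 0$ and $\ker\Delta^0\subseteq\P$. The main obstacle is the $2\times 2$ block computation: ensuring that upper-triangularity produces precisely the zero entries needed for the product to vanish, and that the $\Delta$-modification is carried out in the correct left/right order so that the congruence $Y_0 X Y_1 \equiv \Delta(Y_0 X)\,\Delta(Y_1)$ is genuinely valid.
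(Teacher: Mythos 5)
Your overall strategy --- show that $Y_0XY_1\in\P$ for all $Y_0\in\ker\Delta^0$, $X\in\tn$, $Y_1\in\ker\Delta^1$, and then use irreducibility to force one of the two kernels into $\P$ --- is a genuinely different route from the paper's (which works with the single projection $P=\sum_i E^0_i$, shows $P^\perp\tn P\subseteq\P$, and deduces $\pi(P)\in\{0,I\}$). Your $2\times2$ block computation, the identity $E^0_iY_0XE^0_i=E^0_iY_0E^0_iXE^0_i$, and the congruence $Y_0XY_1\equiv\Delta(Y_0X)\,\Delta(Y_1)\pmod{\P}$ are all correct \emph{provided} $E^0_i$ is the lower subinterval of $E_i$ for every $i$.

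That proviso is where the gap lies. The hypothesis permits the decompositions to be oriented inconsistently: for some $i$ the shared endpoint is the upper endpoint of $E^0_i$, for others it is the lower endpoint. A single global swap of the labels $0\leftrightarrow1$ cannot make the orientation uniform across all $i$, and a per-index swap would change which operators belong to $\ker\Delta^0$ and $\ker\Delta^1$, i.e.\ would prove a different statement. On an index $i$ where $E^0_i$ is the \emph{upper} piece, upper-triangularity kills $E^0_iZE^1_i$ rather than $E^1_iZE^0_i$; the two $2\times2$ block patterns are interchanged, their product in the order $Z_0Z_1$ no longer vanishes, and $\Delta(Y_0X)\Delta(Y_1)$ survives on exactly those blocks. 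The paper closes this hole with Lemma~\ref{prim-ultrafilter-lemma}: taking $\sigma$ to be the set of indices on which $E^0_i$ is the lower piece, one of $\ker\Delta_\sigma$, $\ker\Delta_{\sigma^c}$ already lies in $\P$ (because $P_\sigma X-XP_\sigma\in\ker\Delta\subseteq\P$, so $\pi(P_\sigma)$ commutes with the image of the representation and must be $0$ or $I$), and this lets one discard the blocks with the wrong orientation before running the computation. Your argument needs this dichotomy, or an equivalent one, inserted at the outset; with it, the remainder goes through.
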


\begin{proof}
Each $E_i$ is decomposed into the sum of two intervals which share a common
endpoint. Let $\sigma$ be the set of $i$ for which the shared endpoint is
the upper endpoint of $E^0_i$ and the lower endpoint of $E^1_i$. Clearly
$\sigma^c$ is then the set of $i$ for which the upper endpoint of $E^1_i$
equals the lower endpoint of $E^0_i$. By Lemma~\ref{prim-ultrafilter-lemma},
$\P$ contains one of $\ker\Delta_\sigma$, $\ker\Delta_{\sigma^c}$. Without loss
of generality assume $\ker\Delta_\sigma\subseteq\P$. Let
$P := \sum_{i\in\sigma} E^0_i$ and observe that for each $i\in\sigma$ there
is an $N_i\in\N$ such that $E^0_i = N_iE_i$ and $E^1_i = N^\perp_iE_i$, and thus
\[
  \Delta_\sigma(P^\perp XP) = \sum_{i\in\sigma} E^1_i XE^0_i
                    = \sum_{i\in\sigma} E_i N_i^\perp XN_iE_i
                    = 0
\]
If $\pi$ is an irreducible representation with $\ker\pi = \P$ then
$\pi(P^\perp XP) = 0$ and so the range of $\pi(P)$ is an invariant
subspace of $\pi(\tn)$, whence, one of $P, P^\perp\in\P$. If $P\in\P$ then
\[
  \ker\Delta^1 \subseteq \ker\Delta_\sigma + P\tn \subseteq\P
\]
while if $P^\perp\in\P$ then
\[
  \ker\Delta^0 \subseteq \ker\Delta_\sigma + \tn P^\perp \subseteq\P
\]
\end{proof}

\begin{theorem}
Let $\P\in\pint$ in $\tnn$. Then there is a free ultrafilter $\U$ and a sequence of
pairwise orthogonal finite-rank intervals $E_i$ such that
$\lim_{i\in\U}\rk E_i = +\infty$ and $\P$ contains
\[
  \{X\in\tnn : \lim_{i\in\U} \|E_i X E_i\| = 0\}
\]
Moreover, given any decomposition of the $E_i$ as the sums of intervals $E^0_i + E^1_i$,
we can replace $\{E_i\}$ with one of $\{E^0_i\}$ or $\{E^1_i\}$.
\end{theorem}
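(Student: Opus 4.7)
The plan has three steps: reduce $\P$ to contain a suitable block-diagonal kernel via Proposition~\ref{prim-contains-kernel-expectation-prp}, extract the ultrafilter $\U$ via Lemma~\ref{prim-ultrafilter-lemma}, and verify the containment by a norm-approximation argument. First, since each atom of $\N$ in $\tnn$ has rank one, every diagonal ideal $\I_n$ for $n<\infty$ is a maximal two-sided ideal of $\tnn$ (cf.\ equation~(\ref{inminus-at-an-atom-equ}) and the discussion in Proposition~\ref{prim-contains-cpts-prop}). So if $\I_\P=\I_n$ for finite $n$, then $\P=\I_n\in\pmin\cap\pmax$, contradicting $\P\in\pint$; hence $\I_\P=\I_\infty$, and $\P\supsetneq\I_\infty$ since $\P\notin\pmin$. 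Proposition~\ref{prim-contains-kernel-expectation-prp} then yields an increasing sequence of integers $n_k$ such that the pairwise orthogonal finite-rank intervals $E_i:=N_{n_i}-N_{n_{i-1}}$ satisfy $\ker\Delta\subseteq\P$, where $\Delta(X):=\sum_i E_iXE_i$; the rank-growth inequality $\rk(N_{n_{k+1}}-N_{n_k})\ge\rk N_{n_k}$ built into that construction forces $\rk E_i\to+\infty$.

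Next, Lemma~\ref{prim-ultrafilter-lemma} provides the ultrafilter $\U:=\{\sigma\subseteq\NN:\ker\Delta_\sigma\subseteq\P\}$, which I claim is free: otherwise $\{i_0\}\in\U$ puts $\{X:E_{i_0}XE_{i_0}=0\}$ in $\P$, but the quotient of $\tnn$ by this ideal is isomorphic to the finite upper-triangular matrix algebra $E_{i_0}\tnn E_{i_0}$, whose primitive ideals are all maximal, so $\P$ would equal some $\I_n\in\pmin$, a contradiction. Freeness of $\U$ combined with $\rk E_i\to+\infty$ then gives $\lim_{i\in\U}\rk E_i=+\infty$. For the containment, let $X$ satisfy $\lim_{i\in\U}\|E_iXE_i\|=0$ and set $Y:=\Delta(X)$, so that $X-Y\in\ker\Delta\subseteq\P$. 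For each $\e>0$, let $\sigma_\e:=\{i:\|E_iYE_i\|\ge\e\}$; by hypothesis $\sigma_\e^c\in\U$, which by the proof of Lemma~\ref{prim-ultrafilter-lemma} is equivalent to $P_{\sigma_\e}\in\P$. Since $Y$ is block-diagonal relative to $\{E_i\}$, we have $P_{\sigma_\e}YP_{\sigma_\e}=\Delta_{\sigma_\e}(Y)\in\P$ and $\|Y-\Delta_{\sigma_\e}(Y)\|\le\e$; norm-closure of $\P$ (Corollary~\ref{primitives-are-strongly-constructible-cor}) then gives $Y\in\P$, whence $X\in\P$.

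For the \emph{moreover} clause, Lemma~\ref{decompose-intervals-prp} applied to the decomposition $E_i=E^0_i+E^1_i$ delivers some $j\in\{0,1\}$ with $\ker\Delta^j\subseteq\P$, where $\Delta^j(X):=\sum_i E^j_iXE^j_i$, and the preceding arguments re-run with $\{E^j_i\}$ replacing $\{E_i\}$. The main obstacle is the rank condition $\lim_{i\in\U^j}\rk E^j_i=+\infty$, since the decomposition itself places no a priori lower bound on $\rk E^j_i$. Since $\rk E^0_i+\rk E^1_i=\rk E_i\to+\infty$, the sets $\tau_k:=\{i:\rk E^k_i\ge\rk E_i/2\}$ cover $\NN$, so one of $\tau_0,\tau_1$ lies in $\U$; pre-restricting the decomposition to that $\tau_k$ (using $\ker\Delta_{\tau_k}\subseteq\P$) before invoking Lemma~\ref{decompose-intervals-prp} secures the rank condition whenever the lemma returns $j=k$. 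The residual subcase $j=1-k$ is the delicate part, requiring a further structural argument about $\P$ to derive a contradiction from the assumption $\P\in\pint$.
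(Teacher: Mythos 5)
Your setup, your extraction of the intervals from Proposition~\ref{prim-contains-kernel-expectation-prp}, your ultrafilter $\U$ from Lemma~\ref{prim-ultrafilter-lemma}, and your norm-approximation argument for the containment are all correct and essentially what the paper does (your proof that $\U$ is free via the finite-dimensional quotient $E_{i_0}\tnn E_{i_0}$, whose primitive ideals are maximal, is a clean alternative to the paper's route, and is fine). The genuine gap is exactly where you flag it: in the \emph{moreover} clause you cannot control which branch Lemma~\ref{decompose-intervals-prp} returns, your $\tau_k$ trick only helps when the lemma happens to return $j=k$, and the ``residual subcase'' is left unresolved. No further structural argument about that subcase is needed, because your overall architecture is inverted relative to what actually works: you try to establish the rank condition first and then preserve it under decomposition, whereas the rank condition should be \emph{derived from} the decomposition property.

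Concretely, the paper first proves that after any decomposition $E_i=E^0_i+E^1_i$ some $\ker\Delta^j$ lies in $\P$ and that the associated ultrafilter $\U^j=\{\sigma:\ker\Delta^j_\sigma\subseteq\P\}$ coincides with $\U$ (if $\sigma\in\U^j$ and $\sigma^c\in\U$ then $\tnn=\ker\Delta^j_\sigma+\ker\Delta_{\sigma^c}\subseteq\P$, absurd; so $\U^j\subseteq\U$ and equality follows from maximality of ultrafilters). Only then does it address the rank condition, for the original system and for any system produced by decompositions, all at once: if $\rk E_i$ did \emph{not} tend to $+\infty$ along $\U$, there would be $M$ and $\sigma\in\U$ with $\rk E_i\le M$ on $\sigma$; splitting off a rank-one subinterval at each stage and iterating the decomposition lemma at most $M$ times (the ultrafilter staying equal to $\U$ throughout) yields intervals of rank at most $1$ on a set of $\U$, and the corresponding $\ker\Delta_\sigma$ then contains $\R^\infty_\N$, forcing $\P\in\pmax$ by \cite[Theorem 3.8]{Orr:MaTwSoIdNeAl} --- contradicting $\P\in\pint$. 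The same finitely-many-decompositions argument disposes of non-free $\U$ (it would give $\P\supseteq\I_n$ for some finite $n$, so $\P\in\pmin$). With the order of argument reversed in this way, whichever branch the decomposition lemma selects automatically satisfies the rank condition, and your unresolved subcase disappears.
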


\begin{proof}
The existence of the intervals follows from
Proposition~\ref{prim-contains-kernel-expectation-prp}.
Let $\U$ be the ultrafilter obtained in Lemma~\ref{prim-ultrafilter-lemma}.
If $\lim_{i\in\U} \|E_i X E_i\| = 0$ then, given $\e>0$, there is a $\sigma\in\U$
such that $\|E_i X E_i\|<\e$ for all $i\in\sigma$. Thus taking
$X' := X - \Delta_\sigma(X)$, we see that $\|X - X'\| = \|\Delta_\sigma(X)\|\le \e$
and that $\Delta_\sigma(X') = 0$, whence $X'\in\P$. Thus $X$ is a limit point of $\P$
and since $\P$ is norm closed, $X\in\P$.

Given a decomposition $E_i = E^0_i + E^1_i$, we know from
Proposition~\ref{decompose-intervals-prp} that one of $\ker\Delta^j$ ($j=0,1$)
is in $\P$. Without loss suppose $\ker\Delta^0\subseteq\P$. Again by
Lemma~\ref{prim-ultrafilter-lemma},
$\U^0 := \{\sigma : \ker \Delta^0_\sigma \subseteq\P\}$ is an ultrafilter.
Now let $\sigma\in\U^0$. Since $\U$ is an ultrafilter, one of $\sigma,\sigma^c\in\U$.
But if $\sigma^c\in\U$ then
\[
  \tnn = \ker\Delta^0_\sigma + \ker\Delta_{\sigma^c} \subseteq\P
\]
which is impossible. Thus $\sigma\in\U$ and so, since $\sigma$ was arbitrary,
$\U^0\subseteq\U$. But $\U_0$ is also an ultrafilter, so in fact
$\U_0 = \U$. Thus we may replace $\{E_i\}$ with $\{E^0_i\}$.

Now it follows that the limit of the ranks
of the intervals must be $+\infty$, for otherwise after finitely many
decompositions we could conclude that $\P\supseteq\R^\infty_\N$ and so
$\P\in\pmax$. Similarly if $\U$ were not free then $\P$ would contain
$\{X : E_{i_0}XE_{i_0} = 0\}$ for some $i_0\in\NN$ and, after finitely many
decompositions if necessary, we would see that $\P\supseteq\I_n$ for some $n$,
again contrary to hypothesis.
\end{proof}

\section*{Acknowledgements}

The author gratefully acknowledges the hospitality of Professor~Tony
Carbery and the University of Edinburgh Mathematics Department.

\bibliography{bibliography}
\bibliographystyle{plain}

\end{document}